\newtheorem{thm}{Theorem}[section]
\newtheorem{cor}[thm]{Corollary}
\newtheorem{lem}[thm]{Lemma}
\newtheorem{prop}[thm]{Proposition}
\newtheorem{ques}[thm]{Question}
\theoremstyle{definition}
\theoremstyle{definition}\newtheorem{Def}[thm]{Definition}
\theoremstyle{definition}
\theoremstyle{definition}\newtheorem{Not}[thm]{Notation}
\newcommand{\N}{\mathbb{N}}
\newcommand{\F}{\mathbb{F}}
\renewcommand{\le}{\leqslant}
\renewcommand{\ge}{\geqslant}
\newcommand{\mcJ}{\mathcal{J}}
\newcommand{\msJ}{\mathscr{J}}
\newcommand{\msS}{\mathscr{S}}
\newcommand{\msT}{\mathscr{T}}
\newcommand{\mcC}{\mathcal{C}}
\newcommand{\olR}{\overline{R}}
\newcommand*\xbar[1]{%
   \hbox{%
     \vbox{%
       \hrule height 0.5pt % The actual bar
       \kern0.5ex%         % Distance between bar and symbol
       \hbox{%
         \kern-0.1em%      % Shortening on the left side
         \ensuremath{#1}%
         \kern-0.1em%      % Shortening on the right side
       }%
     }%
   }%
}
\title{Covering Numbers of Commutative Rings}
\date{\today}
\author{Eric Swartz\footnote{Department of Mathematics, William \& Mary, Williamsburg, VA 23187, USA, easwartz@wm.edu}
\and 
Nicholas J. Werner\footnote{Department of Mathematics, Computer and Information Science, State University of New York College at Old Westbury, Old Westbury, NY 11560, USA, wernern@oldwestbury.edu}
}
\begin{document}

%\author{Eric Swartz and Nicholas J. Werner}

%\author{Eric Swartz}
%\Addr{Department of Mathematics\\ 
%College of William and Mary\\ 
%Williamsburg, Virginia 23187\\ 
%U.S.A.\\
%{\tt easwartz@wm.edu}}

%\author{Nicholas J. Werner}
%\Addr{Department of Mathematics, Computer and Information Science\\ 
%State University of New York College at Old Westbury\\
%Old Westbury, NY, USA\\
%{\tt wernern@oldwestbury.edu}}

\maketitle 

\thispagestyle{empty}

\begin{abstract}
A cover of a unital, associative (not necessarily commutative) ring $R$ is a collection of proper subrings of $R$ whose set-theoretic union equals $R$. If such a cover exists, then the covering number $\sigma(R)$ of $R$ is the cardinality of a minimal cover, and a ring $R$ is called $\sigma$-elementary if $\sigma(R) < \sigma(R/I)$ for every nonzero two-sided ideal $I$ of $R$. In this paper, we show that if $R$ has a finite covering number, then the calculation of $\sigma(R)$ can be reduced to the case where $R$ is a finite ring of characteristic $p$ and the Jacobson radical $J$ of $R$ has nilpotency 2.  Our main result is that if $R$ has a finite covering number and $R/J$ is commutative (even if $R$ itself is not), then either $\sigma(R)=\sigma(R/J)$, or $\sigma(R)=p^d+1$ for some $d \ge 1$.  As a byproduct, we classify all commutative $\sigma$-elementary rings with a finite covering number and characterize the integers that occur as the covering number of a commutative ring.\\
MSC2020: Primary 16P10; Secondary 13M99, 05E16\\
Keywords:
\end{abstract}

\section{Introduction}\label{Intro section}

The goal of this paper is to study how a ring can be expressed as a union of proper subrings. The motivation for this comes from the corresponding problem for groups. Given a group $G$, we say that $G$ is \textit{coverable} if there exists a collection $\mathcal{H}$ of proper subgroups of $G$ such that $G = \bigcup_{H \in \mathcal{H}} H$. If such a set $\mathcal{H}$ exists, then it is called a \textit{cover} of $G$, and the \textit{covering number} of $G$, denoted by $\sigma(G)$, is the cardinality of a minimal cover. 

It is easy to see that a group if coverable if and only if it is not cyclic. Furthermore, it is well known that no group is the union of two proper subgroups, so $\sigma(G) \ge 3$ for any coverable group $G$, and in 1926 Scorza \cite{Scorza} characterized groups with covering number equal to $3$.  Cohn \cite{Cohn} proved that every integer of the form $p^d + 1$, where $p$ is a prime and $d$ is a positive integer, occurs as a covering number, and Tomkinson \cite{Tom} subsequently proved that the covering number of a solvable group is always of the form $p^d + 1$.  On the other hand, covering numbers of nonsolvable groups are far less predictable, making it difficult to determine precisely which integers are covering numbers of groups.  Recently, the integers less than or equal to $129$ that are covering numbers of groups were determined in \cite{GaronziKappeSwartz}. We refer the reader to the survey \cite{Kappe} and the introduction of \cite{GaronziKappeSwartz} for summaries of the extensive literature on covering numbers of groups and related problems.

We will investigate similar covering problems for rings. In this paper, all rings are associative and unital. For any ring $R$, $\msJ(R)$ denotes the Jacobson radical of $R$, and often we let $J = \msJ(R)$. For a prime power $q$, $\F_q$ denotes the finite field with $q$ elements. While a ring must contain unity, subrings need not contain a multiplicative identity. That is, given a ring $R$ and a subset $S \subseteq R$, $S$ is a subring of $R$ if it is a group under addition and is closed under multiplication. A \textit{cover} of $R$ is a collection $\mathcal{C}$ of proper subrings of $R$ such that $R = \bigcup_{S \in \mathcal{C}} S$. If a cover exists, then $R$ is \textit{coverable}, and we define $\sigma(R)$ to be the cardinality of a minimal cover. The reason that we permit a subring to lack a unit element is because it allows for a larger class of subrings to be used in ring covers; in particular, ideals of $R$ can be included in covers when necessary. In many cases, however, we will construct covers that consist of maximal subrings of $R$, and such subrings often contain $1_R$.

\begin{lem}\label{1 in max lem}\cite[Lem.\ 2.1]{PeruginelliWerner}
Let $R$ be a ring with unity. Let $M$ be a maximal subring of $R$. Then, either $1_R \in M$; or, $M$ is a maximal two-sided ideal of $R$ such that $R/M \cong \F_p$ for some prime $p$.
\end{lem}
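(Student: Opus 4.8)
The plan is to assume $1_R \notin M$ and deduce that $M$ must be a maximal two-sided ideal with $R/M \cong \F_p$; the alternative $1_R \in M$ then requires no argument. The key construction is the set $T = \Z 1_R + M = \{n 1_R + m : n \in \Z,\ m \in M\}$. First I would check that $T$ is a subring: it is visibly an additive subgroup, and since $1_R$ is central, the product $(n 1_R + m)(n' 1_R + m')$ equals $nn' 1_R + n m' + n' m + m m'$, whose final three terms all lie in $M$ (using that $M$ is an additive subgroup that is closed under multiplication). Because $1_R \notin M$, the subring $T$ strictly contains $M$, so maximality of $M$ forces $T = R$. Consequently every element of $R$ can be written in the form $n 1_R + m$ with $n \in \Z$ and $m \in M$.

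The next step is to promote $M$ from a subring to a two-sided ideal, using the representation $R = \Z 1_R + M$. Given $r = n 1_R + m' \in R$ and $m \in M$, I would compute $rm = nm + m'm$ and $mr = nm + mm'$; since $nm$ is an integer multiple of $m \in M$ and both $m'm$ and $mm'$ lie in $M$, each product lies in $M$. Hence $M$ is a two-sided ideal. Moreover it is a maximal ideal, because any ideal strictly containing $M$ is in particular a subring strictly containing $M$, and so must equal $R$ by the maximality hypothesis on $M$.

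Finally I would identify the quotient. Since $R = \Z 1_R + M$, the ring $R/M$ is generated by its identity coset, so $n \mapsto n(1_R + M)$ defines a surjective ring homomorphism $\Z \to R/M$, whence $R/M \cong \Z/k\Z$ for some integer $k \ge 0$. As $M$ is a maximal ideal, $R/M$ is a simple ring with identity, and being a homomorphic image of $\Z$ it is commutative; a commutative simple ring with $1$ is a field, which forces $k = p$ to be prime and $R/M \cong \F_p$. I do not expect a serious obstacle here, as the argument reduces to a sequence of closure verifications once $T = \Z 1_R + M$ is introduced. The single point demanding care is the passage from ``maximal subring'' to ``maximal ideal'': maximality is hypothesized only among subrings, which is a priori a stronger condition, and it is precisely the identity $R = \Z 1_R + M$ obtained from that hypothesis that makes $M$ an ideal at all.
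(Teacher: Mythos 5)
Your proof is correct: the passage to $T = \Z 1_R + M$, the forced equality $T = R$, the resulting ideal property of $M$, and the identification $R/M \cong \Z/k\Z$ with $k$ prime are all sound. The paper itself states this lemma with a citation to \cite[Lem.\ 2.1]{PeruginelliWerner} rather than proving it, and your argument is essentially the standard one given there.
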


The literature on covering numbers of rings is not as capacious as that for groups. Nevertheless, in recent years a number of papers have appeared on the topic, and covering numbers have been computed for several classes of rings. All rings (with or without unity) having covering number 3 were classified in \cite{LucMar}; a recent preprint \cite{Cohen} considers the same question for covering number 4. Formulas for $\sigma(R)$ are known when $R$ is a matrix ring over a finite field \cite{LucMarP, Crestani}, a direct product of finite fields \cite{Werner}, a finite semisimple ring \cite{PeruginelliWerner}, or a $2 \times 2$ upper triangular matrix ring over a finite field \cite{CaiWerner}.

It is known that no group has covering number 7 \cite{Tom} or 11 \cite{DetLuc}. An obvious question for rings is whether there exists $n \ge 3$ such that no ring has covering number $n$. There do exist rings with covering number 7 (the matrix ring $M_2(\F_3)$) and 11 (the matrix ring $M_2(\F_4)$); in fact, examples of rings $R$ with $\sigma(R)=n$ are known for all $3 \le n \le 12$. Thus, if such an $n$ exists, then it must be at least 13. 

\begin{ques}
Does there exist a (unital, associative) ring $R$ with $\sigma(R)=13$?
\end{ques}

While we are not able to fully resolve this question, we will answer it in the negative for commutative rings, and more generally for any ring $R$ such that $R/\msJ(R)$ is commutative, where $\msJ(R)$ is the Jacobson radical of $R$. To accomplish this, we examine how a cover of $R$ relates to covers of subrings and residue rings of $R$. If $I$ is a two-sided ideal of $R$, then one may lift a cover of $R/I$ to $R$. Consequently, it is always true that $\sigma(R) \le \sigma(R/I)$. The rings for which this inequality is always strict deserve special attention.

\begin{Def}
A ring $R$ will be called \textit{$\sigma$-elementary} if $R$ is coverable and $\sigma(R) < \sigma(R/I)$ for every nonzero two-sided ideal $I$ of $R$.
\end{Def}

It is clear from this definition that if $R$ is coverable, then there is an ideal of $R$ such that $R/I$ is $\sigma$-elementary and $\sigma(R) = \sigma(R/I)$. Also, if an integer $n$ occurs as the covering number of a ring, then there must be a $\sigma$-elementary ring $R$ such that $\sigma(R)=n$. Hence, $\sigma$-elementary rings are quite useful for answering questions about covering numbers. In Theorem \ref{thm:comm_sigma}, we give a full classification of commutative $\sigma$-elementary rings with a finite covering number. This is used later to prove our main result.

\begin{thm}\label{thm:main}
Let $R$ be a finite $\sigma$-elementary ring with unity, let $J$ be the Jacobson radical of $R$, and assume that $R/J$ is commutative. If $J \ne \{0\}$, then $\sigma(R) = p^d + 1$, where $p$ is the characteristic of $R$ and $d$ is a positive integer.
\end{thm}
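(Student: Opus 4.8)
The plan is to first invoke the reduction results established earlier in the paper: since $R$ is a finite $\sigma$-elementary ring, we may assume that $R$ has prime characteristic $p$ and that $J^2 = \{0\}$. As $R$ is finite and $R/J$ is commutative, $R/J$ is a finite commutative semisimple ring, so $R/J \cong \prod_{i=1}^{k}\F_{q_i}$ for some prime powers $q_i$, each a power of $p$. Because $J \neq \{0\}$ and $R$ is $\sigma$-elementary, taking $I = J$ already gives $\sigma(R) < \sigma(R/J)$, so a minimal cover of $R$ genuinely exploits the radical. The goal is to show that this minimal cover is governed by the lines through the origin of a two-dimensional vector space $V$ over a single field $\F_{p^d}$, of which there are exactly $p^d + 1$. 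Lifting the primitive idempotents of $R/J$ to orthogonal idempotents $\epsilon_1,\dots,\epsilon_k$ of $R$ (possible since $J$ is nilpotent) yields a Peirce decomposition $R = \bigoplus_{i,j}\epsilon_i R \epsilon_j$ in which each off-diagonal piece lies in $J$ and each diagonal block $\epsilon_i R \epsilon_i$ has residue field $\F_{q_i}$; this exhibits $J$ as an $(R/J)$-bimodule whose left and right actions record the possible non-commutativity of $R$.

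When $R$ itself is commutative I would simply invoke the classification of commutative $\sigma$-elementary rings, Theorem~\ref{thm:comm_sigma}: since $J \neq \{0\}$, that classification forces $R$ into the family whose covering number equals $p^d + 1$, and we are finished. The real content of the present theorem is therefore the passage from ``commutative'' to the weaker hypothesis ``$R/J$ commutative.'' To organize this, I would isolate the commutator ideal $D = \langle [R,R]\rangle$, which is contained in $J$ because $R/J$ is commutative; the case $D = \{0\}$ is precisely the commutative case just handled, so we may assume $D \neq \{0\}$.

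For the upper bound I would construct an explicit cover of size $p^d + 1$, modeled on the two prototypes $\F_q[x,y]/(x,y)^2$ and the upper-triangular ring $T_2(\F_q)$. In each, one singles out two $\F_{p^d}$-directions --- one coming from a residue-field difference across two diagonal blocks (or a radical generator) and one from a radical direction --- so that $R$ maps onto a two-dimensional $\F_{p^d}$-space $V$, and the desired $p^d+1$ subrings are the preimages of the $p^d + 1$ lines through the origin of $V$. The essential computation is that each such preimage is closed under multiplication; this is the analogue of the identity showing that $\{\,(a,b,d) : b = c(d-a)\,\}$ is a subring of $T_2(\F_q)$ for every scalar $c$, and it is exactly here that the bimodule structure of $J$ and the square-zero condition $J^2=\{0\}$ are consumed. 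Since the $p^d+1$ lines cover $V$, their preimages cover $R$, giving $\sigma(R) \le p^d + 1$.

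For the matching lower bound I would argue that a ``generic'' element --- one whose image in $R/J$ generates the relevant residue field --- can lie only in a subring surjecting onto that field, so that covering all such elements forces the underlying lines to cover all of $V$; as covering a two-dimensional $\F_{p^d}$-space by lines through the origin requires all $p^d + 1$ of them, this yields $\sigma(R) \ge p^d + 1$. The $\sigma$-elementary hypothesis is what pins the relevant space down to dimension exactly two over a single field: were the structure larger, one could quotient by a sub-bimodule of $J$ to strictly reduce it while leaving the covering number unchanged, contradicting the minimality defining $\sigma$-elementary. I expect the main obstacle to be the non-commutative bookkeeping needed to make this precise: one must check that the left and right $(R/J)$-actions on $J$, together with the several residue fields $\F_{q_i}$, conspire to produce a single well-defined field $\F_{p^d}$ and a single two-dimensional space $V$, and one must rule out ``exotic'' members of a minimal cover --- in particular the non-unital maximal subrings described in Lemma~\ref{1 in max lem} and subrings whose residue is a proper subfield --- so that the line-covering lower bound genuinely applies.
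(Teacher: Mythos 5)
Your reduction steps and your treatment of the commutative case (via Theorem \ref{thm:comm_sigma}) match the paper, and your dichotomy on the commutator ideal $D$ is morally the paper's dichotomy. But the entire noncommutative case --- which is the actual content of the theorem --- is left as a heuristic, and the heuristic does not match what is really true. The paper does not reduce the noncommutative case to a two-dimensional $\F_{p^d}$-space at all. Instead (Theorem \ref{thm:Rcase1}) it takes a maximal subring $T=S\oplus I$ with $S\in\msS(R)$ and $|J:I|$ minimal, shows that $T$ has exactly $|J:I|$ conjugates $T_1,\dots,T_n$ (Lemma \ref{lem:numberTconj}), proves via explicit commutator computations that $\{T_1,\dots,T_n,M\}$ is a cover, where $M$ is a maximal subring containing $(T_1\cap T_2)+J$ (Lemma \ref{lem:intTs} and Proposition \ref{prop:upperboundmultT}), and gets the lower bound by noting each $S'\in\msS(R)$ must lie in some cover member (Lemma \ref{Sigma elementary lemma}) together with a counting argument showing $n$ subrings sharing the common intersection $\bigcap_{S'}S'$ cannot exhaust $R$. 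Here $p^d=|J:I|$ is a $p$-power simply because it is the index of an additive subgroup of a $p$-group; no ``single residue field $\F_{p^d}$'' or two-dimensional $V$ is identified or needed, and indeed in examples like the upper triangular ring the radical $J$ is one-dimensional and simple as a bimodule, so your proposed ``quotient by a sub-bimodule of $J$ to force dimension two'' cannot work. The obstacle you flag at the end (making the single field and the plane precise) is not mere bookkeeping; it is the wrong target, and the correct statement requires the conjugation machinery above, none of which is supplied.

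A second, independent gap: your case split assumes that $D\ne\{0\}$ puts you in the situation where the line-covering (i.e., conjugate-counting) argument applies. The paper must separately rule out the possibility that $R$ is noncommutative yet all the complements $S\in\msS(R)$ lie inside a single maximal subring (this genuinely occurs for non-$\sigma$-elementary rings such as $\F_2[D_8]$). Showing that a $\sigma$-elementary ring in that configuration is forced to be commutative is Theorem \ref{thm:Rcase2}, a nontrivial counting argument comparing covers of $R$ with covers of $A=\langle S:S\in\msS(R)\rangle$; your proposal contains no substitute for it. So the skeleton is right but the two theorems that carry the proof are missing.
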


Theorem \ref{thm:main} can be viewed as an analog for rings of Tomkinson's result about solvable groups. If $J = \{0\}$ and $R=R/J$ is commutative, then $R$ is a direct product of finite fields, in which case the covering number can be found by the formulas in \cite{Werner} (summarized in Section \ref{Commutative section} below). In either case, the covering number of the ring cannot be 13.

\begin{cor}\label{cor:13}
Let $R$ be a (unital, associative) ring such that $R/\msJ(R)$ is commutative. Then, $\sigma(R) \ne 13$. 
\end{cor}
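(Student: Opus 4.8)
The plan is to reduce immediately to the situation governed by Theorem \ref{thm:main} and then to rule out $13$ by a short arithmetic argument. Suppose, for contradiction, that $R$ is a ring with $R/\msJ(R)$ commutative and $\sigma(R) = 13$. Since $R$ is coverable, the observation following the definition of $\sigma$-elementary supplies a two-sided ideal $I$ with $R/I$ being $\sigma$-elementary and $\sigma(R/I) = \sigma(R) = 13$. The first thing I would verify is that $R/I$ again has a commutative semisimple quotient: writing $\bar{R} = R/I$, one has $\msJ(\bar R) \supseteq (\msJ(R) + I)/I$, so $\bar R / \msJ(\bar R)$ is a quotient of $R/(\msJ(R)+I)$, which is itself a quotient of the commutative ring $R/\msJ(R)$; hence $\bar R/\msJ(\bar R)$ is commutative. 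Replacing $R$ by $\bar R$, I may therefore assume that $R$ is $\sigma$-elementary, that $R/\msJ(R)$ is commutative, and that $\sigma(R) = 13$.

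Next I would invoke the reduction theory developed earlier in the paper (as announced in the abstract): a $\sigma$-elementary ring with a finite covering number is finite of prime characteristic $p$. This places $R$ squarely within the hypotheses of Theorem \ref{thm:main}. Set $J = \msJ(R)$ and split into two cases. If $J \ne \{0\}$, then Theorem \ref{thm:main} gives $\sigma(R) = p^d + 1$ for some positive integer $d$; since $\sigma(R) = 13$ would force $p^d = 12$, and $12$ is not a prime power, this case cannot occur. If $J = \{0\}$, then $R = R/J$ is commutative and semisimple, hence a finite direct product of finite fields; here I would appeal to the classification of commutative $\sigma$-elementary rings (Theorem \ref{thm:comm_sigma}) together with the formulas of \cite{Werner}, which show that the only $\sigma$-elementary products of finite fields have the form $\F_q \times \F_q$ with $\sigma(\F_q \times \F_q) = q + 1$. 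Again $\sigma(R) = 13$ would require $q = 12$, which is not a prime power, so this case is impossible as well. Combining the two cases yields a contradiction, proving $\sigma(R) \ne 13$.

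The genuine content of the corollary resides in Theorem \ref{thm:main} and in the classification of commutative $\sigma$-elementary rings; once those are in hand, the only real steps are checking that commutativity of the semisimple quotient descends to the $\sigma$-elementary quotient $R/I$, and observing the purely arithmetic obstruction that $13 - 1 = 12$ is not a prime power. I expect the descent of commutativity to be the only point requiring care, since it is precisely what allows the hypothesis ``$R/\msJ(R)$ commutative'' to be transported to the finite $\sigma$-elementary ring to which Theorem \ref{thm:main} applies; the remainder of the argument is immediate.
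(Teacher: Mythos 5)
Your reduction to a finite $\sigma$-elementary ring of characteristic $p$, the descent of commutativity of the semisimple quotient, and the case $J \ne \{0\}$ all match the paper's argument: there Theorem \ref{thm:main} gives $\sigma(R)=p^d+1$, and $12$ is not a prime power. The gap is in the case $J=\{0\}$. You assert that the only $\sigma$-elementary products of finite fields are of the form $\F_q\times\F_q$ with $\sigma(\F_q\times\F_q)=q+1$, but that is not what Theorem \ref{thm:comm_sigma} says: the $\sigma$-elementary semisimple commutative rings are $\prod_{i=1}^{\tau(q)}\F_q$, where $\tau(q)=p$ when $q=p$ and $\tau(q)=\psi(p,n)+1$ when $q=p^n$ with $n>1$, and the covering number is $\tau(q)\,\omega(n)+n\binom{\tau(q)}{2}$, not $q+1$. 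For example, the $\sigma$-elementary ring for $q=3$ is $\F_3^3$ with covering number $6$, and for $q=4$ it is $\F_4\times\F_4$ with covering number $4$, not $5$. So the conclusion ``$\sigma(R)=13$ would require $q=12$'' does not follow, and the semisimple case is not settled by the observation that $12$ is not a prime power.

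To close the gap you must actually rule out $13$ as a value of $\tau(q)\,\omega(n)+n\binom{\tau(q)}{2}$, which is what the paper does: if $n=1$ this is $p+\binom{p}{2}$, which takes values $3,6,15,28,\dots$ and never $13$; if $n\ge 2$ then $\sigma(R)\ge\tau(q)^2$, forcing $\tau(q)\le 3$, i.e.\ at most two monic irreducible polynomials of degree $n$ over $\F_p$, which happens only for $q=4$ (giving $\sigma=4$) and $q=8$ (giving $\sigma=12$). Only after this finite check is the semisimple case eliminated. Everything else in your write-up is sound, but this arithmetic verification is genuine content of the corollary, not a formality.
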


The paper is structured as follows. In Section \ref{Basic section}, we collect the basic properties of covers, covering numbers, and $\sigma$-elementary rings that will be used throughout the article. In Section \ref{Reduction section}, we prove (Theorem \ref{full reduction thm}) that if a ring $R$ has a finite covering number, then to compute $\sigma(R)$ we may assume that $R$ is a finite ring of characteristic $p$ with Jacobson radical of nilpotency 2. Section \ref{Commutative section} is devoted to the promised classification of commutative $\sigma$-elementary rings (Theorem \ref{thm:comm_sigma}), which allows us to fully classify the possible covering number of a commutative ring (Corollary \ref{Full commutative case}). In Section 5, we generalize the results of Section 4 to the case where $R/\msJ(R)$---but not necessarily $R$ itself---is commutative. The paper closes with the proofs of Theorem \ref{thm:main} and Corollary \ref{cor:13}.

%%%%%%%%%%%%%%%%%%%%%%%%%%%%%%%%%%%%%%%%%%%%%%%%%%%%%%%%%%
%%%%%%%%%%%%%%%%%%%%%%%%%%%%%%%%%%%%%%%%%%%%%%%%%%%%%%%%%%
%%%%%%%%%%%%%%%%%%%%%%%%%%%%%%%%%%%%%%%%%%%%%%%%%%%%%%%%%%
\section{Basic Properties}\label{Basic section}

Recall from the introduction that a ring $R$ is coverable if there exists a collection $\mcC$ of proper subrings of $R$ such that $R = \bigcup_{S \in \mcC} S$, and $\sigma(R)$ is the size of a minimal cover (if one exists). We list below several basic, but useful, observations about covers and covering numbers. The proofs of these are straightforward, and we will use these properties without comment for the remainder of this work.

\begin{lem}\label{lem:basics}
Let $R$ be a ring with unity.
\begin{enumerate}[(1)]
\item $R$ is coverable if and only if $R$ cannot be generated (as a ring) by a single element.
\item If $R$ is noncommutative, then $R$ is coverable.
\item For any two-sided ideal $I$ of $R$, a cover of $R/I$ can be lifted to a cover of $R$. Hence, $\sigma(R) \le \sigma(R/I)$.
\item If each proper subring of $R$ is contained in a maximal subring, then we may assume that any minimal cover of $R$ consists of maximal subrings.
\end{enumerate}
\end{lem}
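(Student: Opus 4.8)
The plan is to treat the four parts independently, deducing everything from a description of the subring generated by a single element. Write $\langle r\rangle$ for the subring of $R$ generated by $r\in R$; since subrings need not contain $1_R$, this is precisely the additive subgroup generated by the powers $r,r^2,r^3,\dots$, equivalently the set of $\Z$-linear combinations of positive powers of $r$. For part (1), I argue by the contrapositive in each direction. If $R=\langle r\rangle$ for some $r$, then any proper subring must miss $r$ (otherwise it would contain all of $\langle r\rangle=R$), so no collection of proper subrings can have $r$ in its union and $R$ is not coverable. Conversely, if no single element generates $R$, then each $\langle r\rangle$ is a proper subring and the family $\{\langle r\rangle : r\in R\}$ covers $R$ because $r\in\langle r\rangle$; hence $R$ is coverable. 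The only point worth flagging is that coverability asks merely for the existence of a cover, so this possibly infinite family is legitimate.

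Part (2) reduces immediately to (1): every element of $\langle r\rangle$ is a $\Z$-linear combination of powers of the single element $r$, so $\langle r\rangle$ is commutative. Thus a singly generated ring is commutative, and by contraposition a noncommutative $R$ cannot be singly generated, hence is coverable by (1).

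For part (3), let $\pi\colon R\to R/I$ be the quotient map and let $\mcC$ be a cover of $R/I$ by proper subrings. I would check the routine facts that $\pi^{-1}(S)$ is a subring of $R$ for each subring $S\subseteq R/I$, that it is proper whenever $S$ is (lift any element of $R/I$ lying outside $S$), and that $\bigcup_{S\in\mcC}\pi^{-1}(S)=\pi^{-1}\!\bigl(\bigcup_{S\in\mcC}S\bigr)=R$. Since $\pi$ is surjective we have $\pi(\pi^{-1}(S))=S$, so distinct members of $\mcC$ lift to distinct subrings and the cardinality is preserved; applying this to a minimal cover of $R/I$ yields $\sigma(R)\le\sigma(R/I)$.

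For part (4), start from a minimal cover $\{S_1,\dots,S_n\}$ with $n=\sigma(R)$ and, using the hypothesis, choose for each $i$ a maximal subring $M_i\supseteq S_i$. The family $\{M_1,\dots,M_n\}$ still covers $R$ and has cardinality at most $n$; since $\sigma(R)=n$ no cover can be strictly smaller, so the $M_i$ must be distinct and we obtain a minimal cover consisting of maximal subrings. I do not expect a genuine obstacle here, as the paper indicates; the only places demanding mild care are the correct description of $\langle r\rangle$ in the absence of a unit in part (1) and the appeal to minimality in part (4) that prevents the cardinality from dropping below $\sigma(R)$ upon enlarging to maximal subrings.
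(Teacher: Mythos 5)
Your proof is correct; the paper deliberately omits proofs of these facts as "straightforward," and your arguments (characterizing $\langle r\rangle$ as the additive span of positive powers of $r$, lifting covers through $\pi^{-1}$, and enlarging a minimal cover to maximal subrings without dropping cardinality) are exactly the standard ones the authors have in mind. The only point worth noting is that in part (3) the inequality is vacuous when $R/I$ is not coverable, which your argument implicitly and harmlessly sets aside.
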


Next, we say that $R$ is a $\sigma$-elementary ring if $\sigma(R) < \sigma(R/I)$ for all nonzero two-sided ideals of $R$. Evidently, if $R$ is coverable but is not $\sigma$-elementary, then there exists a residue ring $\olR$ of $R$ such that $\sigma(R) = \sigma(\olR)$ and $\olR$ is $\sigma$-elementary. Hence, we can determine $\sigma(R)$ if we know the covering number for each $\sigma$-elementary residue ring of $R$. Also, knowing that $\sigma(R) < \sigma(R/I)$ for some ideal $I \subseteq R$ may provide information on the subrings needed to cover $R$.

\begin{lem}\label{Sigma elementary lemma}
Let $R$ be a coverable ring, let $\mcC$ be a minimal cover of $R$, and let $S$ be a subring of $R$ such that $R=S \oplus I$ for some two-sided ideal $I$ of $R$. If $\sigma(R) < \sigma(R/I)$, then $S \subseteq T$ for some $T \in \mcC$. If, in addition, $S$ is a maximal subring of $R$, then $S \in \mcC$.
\end{lem}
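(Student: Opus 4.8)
The plan is to exploit the fact that the hypothesis $R = S \oplus I$ makes the subring $S$ a ring-theoretic copy of the quotient $R/I$, and then to argue by contradiction using the strict inequality $\sigma(R) < \sigma(R/I)$.

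First I would observe that the canonical projection $\pi \colon R \to R/I$ restricts to a ring isomorphism $S \to R/I$. Since $\pi$ is a ring homomorphism and $S$ is a subring, $\pi|_S$ is a ring homomorphism; the condition $S \cap I = \{0\}$ gives injectivity and $S + I = R$ gives surjectivity, so $\pi|_S$ is a ring isomorphism. In particular $S \cong R/I$, and since $\sigma(R/I)$ is assumed to be defined, $R/I$ (hence $S$) is coverable and $\sigma(S) = \sigma(R/I)$.

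Next, suppose toward a contradiction that $S \not\subseteq T$ for every $T \in \mcC$. Because $\mcC$ covers $R$ and $S \subseteq R$, the collection $\{\, S \cap T : T \in \mcC \,\}$ covers $S$. Each $S \cap T$ is a subring of $S$, and the assumption $S \not\subseteq T$ says precisely that $S \cap T \neq S$, so each of these is a \emph{proper} subring of $S$. Thus $\{\, S \cap T : T \in \mcC \,\}$ is a cover of $S$ by proper subrings, of cardinality at most $|\mcC| = \sigma(R)$, whence $\sigma(S) \le \sigma(R)$. Combined with $\sigma(S) = \sigma(R/I)$, this yields $\sigma(R/I) \le \sigma(R)$, contradicting the hypothesis $\sigma(R) < \sigma(R/I)$. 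Therefore $S \subseteq T$ for some $T \in \mcC$. For the additional claim, if $S$ is a maximal subring of $R$, then the $T \in \mcC$ just produced satisfies $S \subseteq T \subsetneq R$ (members of a cover are proper subrings); maximality of $S$ forces $S = T$, so $S = T \in \mcC$.

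The argument is short, and the only point requiring genuine care is the middle step: one must check that the intersections $S \cap T$ are \emph{properly} contained in $S$ (this is exactly where the contradiction hypothesis enters) and that the family $\{\, S \cap T : T \in \mcC \,\}$ is a legitimate cover of $S$ of size at most $\sigma(R)$. I do not anticipate a serious obstacle here; the substance of the lemma lies entirely in recognizing that the decomposition $R = S \oplus I$ identifies $S$ with $R/I$, so that $S$ being coverable "within" the given cover $\mcC$ would force the forbidden inequality $\sigma(R/I) \le \sigma(R)$.
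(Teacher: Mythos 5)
Your proposal is correct and follows essentially the same argument as the paper: assume no $T \in \mcC$ contains $S$, observe that $\{T \cap S : T \in \mcC\}$ is then a proper cover of $S \cong R/I$, and derive the contradiction $\sigma(R/I) = \sigma(S) \le \sigma(R) < \sigma(R/I)$. The maximality step is also handled identically.
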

\begin{proof}
Suppose that $\sigma(R) < \sigma(R/I)$, but that $S \not\subseteq T$ for any $T \in \mcC$. Then, the collection
\begin{equation*}
\mcC_S \colonequals \{T \cap S : T \in \mcC \}
\end{equation*}
is a proper cover of $S$, which implies that
\begin{equation*}
\sigma(R/I) = \sigma(S) \le |\mcC| = \sigma(R) < \sigma(R/I),
\end{equation*}
a contradiction. Thus, $S \subseteq T$ for some $T \in \mcC$, and if $S$ is maximal then $S = T$.
\end{proof}

%%%%%%%%%%%%%%%%%%%%%%%%%%%%%%%%%%%%%%%%%%%%%%%%%%%%%%%%%%
%%%%%%%%%%%%%%%%%%%%%%%%%%%%%%%%%%%%%%%%%%%%%%%%%%%%%%%%%%
%%%%%%%%%%%%%%%%%%%%%%%%%%%%%%%%%%%%%%%%%%%%%%%%%%%%%%%%%%
\section{Reduction Theorems}\label{Reduction section}

Here, we prove that if $R$ is a ring with unity such that its covering number $\sigma(R)$ is finite, then there exists a residue ring $\olR$ of $R$ such that $\sigma(R) = \sigma(\olR)$ and $\olR$ is finite of characteristic $p$ for some prime $p$, and the Jacobson radical of $\olR$ has nilpotency 2. Thus, the determination of finite covering numbers can always be reduced to a class of finite dimensional algebras over $\F_p$.

\begin{prop}\label{reduction to char p^n}
Let $R$ be a ring such that $\sigma(R)$ is finite. Then, there exists a two-sided ideal $I$ of $R$ such that $\sigma(R/I) = \sigma(R)$, $R/I$ is finite, and $R/I$ has characteristic $p^n$ for some prime $p$ and some $n \ge 1$.
\end{prop}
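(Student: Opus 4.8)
The plan is to produce a two-sided ideal $I$ that is contained in \emph{every} member of a minimal cover of $R$. Then the images of those members form a cover of $R/I$ of the same cardinality, so $\sigma(R/I)\le\sigma(R)$, and since $\sigma(R)\le\sigma(R/I)$ always holds (Lemma \ref{lem:basics}(3)) we obtain $\sigma(R/I)=\sigma(R)$. Thus it suffices to build such an $I$ with $R/I$ finite of prime-power characteristic. The first step is to pass to finite-index subrings: fix a minimal cover $\mcC=\{S_1,\dots,S_m\}$ with $m=\sigma(R)$, and view each $S_i$ as a proper subgroup of $(R,+)$. By a theorem of B.\,H.\ Neumann on coverings of a group by subgroups, the members of infinite index may be discarded while still covering $R$; minimality of $m$ then forces every $S_i$ to have finite additive index. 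Set $N:=\bigcap_{i=1}^m S_i$, a subring of finite additive index $k:=[R:N]$, and fix a finite transversal $F_0$, so that $|F_0|=k$ and $R=F_0+N$.

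Let $I:=\{x\in R: RxR\subseteq N\}$, the largest two-sided ideal of $R$ contained in $N$ (that $I\subseteq N$ follows by taking $1\cdot x\cdot 1$). Since $I\subseteq N\subseteq S_i$ for all $i$, the cover descends to $R/I$ as above, so $\sigma(R/I)=\sigma(R)$. The crux is to show $R/I$ is finite. For this I would set $L:=\{c\in R: Rc\subseteq N\}$, a left ideal contained in $N$, and consider the cyclic left $R$-module $M:=R/L$. A direct computation gives $\mathrm{Ann}_R(M)=\{a:aR\subseteq L\}=\{a:RaR\subseteq N\}=I$, so the module action furnishes an embedding $R/I\hookrightarrow\mathrm{End}_{\Z}(M)$. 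It remains to see that $M$ is finite. Writing $V:=R/N$ (of order $k$), the map $N/L\to V^{F_0}$, $c\mapsto(f\mapsto fc+N)_{f\in F_0}$, is an injective homomorphism, because for $c\in N$ one has $Rc=F_0c+Nc\subseteq N\iff F_0c\subseteq N$. Hence $|N/L|\le k^{k}$ and $|M|=[R:L]=k\,[N:L]\le k^{\,k+1}$, so $\mathrm{End}_{\Z}(M)$ and therefore $R/I$ are finite.

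Finally I would cut the characteristic down to a prime power. The finite ring $\overline R:=R/I$ has some characteristic $c=\prod_j p_j^{a_j}$, and the idempotent decomposition of $\Z/c$ coming from the Chinese Remainder Theorem lifts to orthogonal central idempotents $e_j$ lying in the prime subring $\Z\cdot 1$ with $\sum_j e_j=1$, giving a ring decomposition $\overline R=\prod_j B_j$ with $B_j:=e_j\overline R$ of characteristic $p_j^{a_j}$. By Lemmas \ref{lem:basics}(4) and \ref{1 in max lem} we may assume the minimal cover of $\overline R$ consists of maximal subrings, and because the $e_j$ lie in $\Z\cdot1$ one checks that each such subring $T$ splits as $T=\prod_j(T\cap B_j)$ with at least one factor proper. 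A pigeonhole argument finishes: if for every $j$ some element of $B_j$ avoided all the proper factors $T\cap B_j$, then assembling these elements would produce a point of $\overline R$ outside the cover. Hence there is an index $j^\ast$ for which the proper factors cover $B_{j^\ast}$, giving $\sigma(B_{j^\ast})\le\sigma(\overline R)$; together with Lemma \ref{lem:basics}(3) this yields $\sigma(B_{j^\ast})=\sigma(\overline R)=\sigma(R)$. Taking $I'$ to be the preimage in $R$ of $\prod_{j\ne j^\ast}B_j$ gives the desired ideal, with $R/I'\cong B_{j^\ast}$ finite of characteristic $p_{j^\ast}^{a_{j^\ast}}$.

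The step I expect to be the main obstacle is the finiteness of $R/I$. The naive hope that $R/kR$ is already finite fails—an infinite product of copies of $\F_2$ shows this—and the cosets of the subring $N$ carry no $R$-module structure, so the group-theoretic device of acting on $R/N$ is unavailable. What makes the argument succeed is the identification of the core ideal $I$ as the annihilator of the \emph{finite} cyclic module $R/L$; once this is in place, finiteness is immediate. By comparison I expect the characteristic reduction to be routine, the only delicate point being the maximal subrings that are ideals (the case $\overline R/T\cong\F_p$ in Lemma \ref{1 in max lem}), which do not contain $1$ but nevertheless still split compatibly with the idempotents $e_j$.
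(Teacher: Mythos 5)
Your argument is correct and follows the same outline as the paper's proof: first pass to a finite quotient preserving $\sigma$ (the paper cites Neumann and Lewin for exactly the finite-index/core-ideal step you carry out via the annihilator of the cyclic module $R/L$), then decompose the finite quotient into blocks of prime-power characteristic and show the covering number equals that of one block (the paper cites \cite[Cor.~2.4]{Werner} for your pigeonhole step). The only difference is that you supply self-contained proofs of the cited ingredients, and those reconstructions are sound.
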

\begin{proof}
The fact that a two-sided ideal $I_0$ exists with $R/I_0$ finite and $\sigma(R/I_0) = \sigma(R)$ follows from the work of Neumann \cite[Lem.\ 4.1, 4.4]{Neumann} and Lewin \cite[Lem.\ 1]{Lewin}. Next, it is well known \cite[Thm.\ I.1]{McD} that any finite ring with identity is isomorphic to a direct product of rings of prime power order. Assume that $R/I_0 = \prod_{i=1}^t R_i$, where each $R_i$ is a ring with unity and $|R_i|=p_i^{n_i}$ for some distinct primes $p_1, \ldots, p_t$ and some positive integers $n_1, \ldots, n_t$. Then, by \cite[Cor.\ 2.4]{Werner}, $\sigma(R/I_0) = \min_{1 \le i \le t}\{\sigma(R_i)\}$.
\end{proof}

Hence, to determine the covering numbers of rings, we may restrict out attention to rings of order $p^n$. In fact, we can do better and assume that $n=1$.

\begin{lem}\label{pR in M lem}
Let $R$ be a finite ring of characteristic $p^n$. 
\begin{enumerate}[(1)]
\item Let $S$ be a subring of $R$ such that $R = S + pR$. Then, $S=R$.
\item Let $M$ be a maximal subring of $R$. Then, $pR \subseteq M$.
\end{enumerate}
\end{lem}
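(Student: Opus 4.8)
The plan is to handle the two parts in order, deriving part (2) from part (1). For part (1) the key structural fact is that $pR$ is a two-sided ideal whose iterated $p$-multiples eventually vanish, since $R$ has characteristic $p^n$ forces $p^n R = \{0\}$; the argument is then a finite Nakayama-type iteration. It is worth noting at the outset that $pR$ is a nil ideal (every element $pr$ satisfies $(pr)^n = p^n r^n = 0$ because $p\cdot 1_R$ is central), hence $pR \subseteq \msJ(R)$, but in fact the iteration below avoids needing this.

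First I would observe that because $S$ is closed under addition, $pS \subseteq S$: the element $ps$ is just the $p$-fold sum $s + \cdots + s$, which lies in $S$. Starting from the hypothesis $R = S + pR$, I would substitute $R$ into itself. Applying $p\cdot$ to the identity gives $pR = pS + p^2 R \subseteq S + p^2 R$, and feeding this back into $R = S + pR$ yields $R = S + p^2 R$. Iterating $k$ times produces $R = S + p^k R$ for every $k \ge 1$. Taking $k = n$ and using $p^n R = \{0\}$ collapses this to $R = S$, which is the claim.

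For part (2), I would form the set $M + pR$ and check that it is a subring of $R$: it is an additive subgroup, and since $pR$ is a two-sided ideal, every cross term in a product $(m_1 + pr_1)(m_2 + pr_2)$ lands in $M + pR$, so it is closed under multiplication. As $M \subseteq M + pR \subseteq R$ and $M$ is maximal, either $M + pR = M$ or $M + pR = R$. In the first case $pR \subseteq M$, as desired. In the second case, part (1) applied with $S = M$ forces $M = R$, contradicting the fact that a maximal subring is proper. Hence only the first case occurs, and $pR \subseteq M$.

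The steps are all elementary, so I do not expect a serious obstacle; the only points requiring care are the bookkeeping in the iteration of part (1)---ensuring $pS \subseteq S$ is used to absorb terms into $S$ at each stage---and the verification that $M + pR$ is genuinely closed under multiplication, which is precisely where the two-sidedness of the ideal $pR$ is essential.
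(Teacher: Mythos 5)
Your proposal is correct and follows essentially the same route as the paper: the iteration $R = S + p^kR$ in part (1) is just a set-level rewriting of the paper's element-wise substitution $r = s_0 + ps_1 + \cdots + p^{n-1}s_{n-1}$, and part (2) is identical (the paper justifies closure of $M + pR$ under multiplication by the centrality of $p$, which is exactly why $pR$ is a two-sided ideal). No gaps.
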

\begin{proof}
For (1), let $r \in R$. Then, there exist $s_0 \in S$ and $r_0 \in R$ such that $r = s_0 + p r_0$. Similarly, there exist $s_1 \in S$ and $r_1 \in R$ such that $r_0 = s_1 + p r_1$. So, $r = s_0 + ps_1 + p^2r_1$. Continuing in this manner, we may write
\begin{equation*}
r = s_0 + ps_1 + p^2 s_2 + \cdots + p^{n-1}s_{n-1}
\end{equation*}
for some $s_0, \ldots, s_{n-1} \in S$. Thus, $r \in S$ and $R=S$.

For (2), note that $M+pR$ is a subring of $R$; closure under addition is clear, and closure under multiplication follows from the fact that $p$ is central in $R$. So, we have
\begin{equation*}
M \subseteq M+pR \subseteq R.
\end{equation*}
By maximality, either $M+pR=M$ or $M+pR=R$. In the latter case, $M=R$ by part (1), a contradiction. Thus, $M=M+pR$ and $pR \subseteq M$.
\end{proof}

\begin{prop}\label{char p prop}
Let $R$ be a finite ring of characteristic $p^n$. Then, $\sigma(R) = \sigma(R/pR)$.
\end{prop}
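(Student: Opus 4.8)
The plan is to prove two inequalities, $\sigma(R) \le \sigma(R/pR)$ and $\sigma(R/pR) \le \sigma(R)$, using the fact (established in Lemma \ref{pR in M lem}) that $pR$ interacts well with maximal subrings. Throughout, observe that since $p = p \cdot 1_R$ is central in $R$, the set $pR$ is a two-sided ideal, so the quotient $R/pR$ makes sense and Lemma \ref{lem:basics}(3) applies.

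First I would check that $R/pR$ is coverable, so that $\sigma(R/pR)$ is even defined. If it were not, then by Lemma \ref{lem:basics}(1) the quotient $R/pR$ would be generated by a single element $\bar a$. Lifting $\bar a$ to $a \in R$, the subring $S$ generated by $a$ would satisfy $S + pR = R$, and then Lemma \ref{pR in M lem}(1) would force $S = R$, contradicting the coverability of $R$. Hence $R/pR$ is coverable, and the inequality $\sigma(R) \le \sigma(R/pR)$ is then immediate from Lemma \ref{lem:basics}(3): any minimal cover of $R/pR$ lifts to a cover of $R$ of the same cardinality.

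For the reverse inequality, I would start from a minimal cover $\mcC$ of $R$. Because $R$ is finite, every proper subring is contained in a maximal one, so by Lemma \ref{lem:basics}(4) we may take $\mcC$ to consist of maximal subrings. The crucial input is Lemma \ref{pR in M lem}(2), which guarantees $pR \subseteq M$ for every $M \in \mcC$. Consequently each $M/pR$ is a well-defined subring of $R/pR$, and it is proper since $M \ne R$ forces $M/pR \ne R/pR$. Given any $\bar x \in R/pR$, lifting to $x \in R$ and using that $\mcC$ covers $R$ yields $x \in M$ for some $M$, whence $\bar x \in M/pR$; thus $\{M/pR : M \in \mcC\}$ is a cover of $R/pR$. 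Its cardinality is at most $|\mcC| = \sigma(R)$ (distinct $M$ may yield the same quotient, but this only helps), so $\sigma(R/pR) \le \sigma(R)$, and combining the two inequalities gives the result.

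The argument is essentially a reduction modulo the ideal $pR$, and all the real work is done by Lemma \ref{pR in M lem}: part (1) rules out the degenerate case where the quotient could fail to be coverable, while part (2) ensures that the maximal subrings in a minimal cover all survive the passage to $R/pR$. The only point requiring care—and hence the main obstacle—is confirming that reducing modulo $pR$ neither destroys the covering property nor renders any subring improper; both conclusions follow directly from the containment $pR \subseteq M$, which is precisely what Lemma \ref{pR in M lem}(2) supplies.
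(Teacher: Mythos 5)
Your proof is correct and follows essentially the same route as the paper: the easy inequality comes from lifting covers along the quotient map, and the reverse inequality comes from passing a minimal cover by maximal subrings down to $R/pR$ via Lemma \ref{pR in M lem}(2). The only addition is your explicit verification (via Lemma \ref{pR in M lem}(1)) that $R/pR$ is coverable when $R$ is, which the paper leaves implicit but which is a reasonable point to make.
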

\begin{proof}
We know that $\sigma(R) \le \sigma(R/pR)$. For the other inequality, assume that $R = \bigcup_{i=1}^t M_i$ is a minimal cover of $R$ by maximal subrings $M_1, \ldots, M_t$. By Lemma \ref{pR in M lem}, $pR \subseteq M_i$ for each $1 \le i \le t$, so $M_i / pR$ is a proper subring of $R/pR$ for each $i$. Thus, $R/pR = \bigcup_{i=1}^t M_i/pR$ and $\sigma(R/pR) \le t = \sigma(R)$.
\end{proof}

By combining Propositions \ref{reduction to char p^n} and \ref{char p prop}, we obtain the promised theorem.

\begin{thm}\label{reduction to char p}
Let $R$ be a ring such that $\sigma(R)$ is finite. Then, there exists a two-sided ideal $I$ of $R$ such that $\sigma(R/I) = \sigma(R)$, $R/I$ is finite, and $R/I$ has characteristic $p$ for some prime $p$.
\end{thm}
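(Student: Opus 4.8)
The plan is to chain together Propositions \ref{reduction to char p^n} and \ref{char p prop} through the ideal correspondence theorem; all of the substantive work already resides in those two results, so what remains is bookkeeping. First I would invoke Proposition \ref{reduction to char p^n} to obtain a two-sided ideal $I_1$ of $R$ with $\sigma(R/I_1) = \sigma(R)$, where $\olR \colonequals R/I_1$ is finite of characteristic $p^n$ for some prime $p$ and some $n \ge 1$. This disposes of the passage from an arbitrary (possibly infinite) ring to a finite one of prime-power characteristic.

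Next I would apply Proposition \ref{char p prop} to the finite ring $\olR$ of characteristic $p^n$, which yields $\sigma(\olR) = \sigma(\olR/p\olR)$. The residue ring $\olR/p\olR$ is finite, being a quotient of the finite ring $\olR$, and it is unital with $p\cdot 1 = 0$; since $p$ is not a unit of $\olR$ we have $p\olR \ne \olR$, so $\olR/p\olR$ is nontrivial and therefore has characteristic exactly $p$. The one remaining step is to realize $\olR/p\olR$ as a residue ring of $R$ itself. I would set $I \colonequals pR + I_1$. Because $p$ is central in $R$, the set $pR$ is a two-sided ideal, and hence so is $I$, being the sum of two two-sided ideals. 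Under the identification $\olR = R/I_1$, the ideal $p\olR$ corresponds to $(pR + I_1)/I_1 = I/I_1$, so the third isomorphism theorem gives $\olR/p\olR \cong R/I$.

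Assembling the pieces, I would conclude
\[
\sigma(R/I) = \sigma(\olR/p\olR) = \sigma(\olR) = \sigma(R/I_1) = \sigma(R),
\]
with $R/I$ finite of characteristic $p$, which is exactly the assertion. There is no serious obstacle here: the only points demanding any care are verifying that $pR + I_1$ is genuinely two-sided, which is immediate from the centrality of $p$ already exploited in Lemma \ref{pR in M lem}, and correctly tracking the correspondence so that $\olR/p\olR$ is matched with $R/I$ rather than with some other quotient of $R$. (In the degenerate case $n = 1$ one simply has $p\olR = \{0\}$, whence $I = I_1$ and the conclusion holds trivially.)
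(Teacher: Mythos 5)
Your proposal is correct and takes essentially the same route as the paper, which proves the theorem simply by combining Propositions \ref{reduction to char p^n} and \ref{char p prop}; you have merely made explicit the bookkeeping (the identification $I = pR + I_1$ and the third isomorphism theorem) that the paper leaves implicit. No issues.
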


From here, we assume that $R$ is a finite ring of characteristic $p$. It remains to demonstrate that we may assume $\msJ(R)^2= \{0\}$. To do this, we recall two results: Nakayama's Lemma \cite[4.22]{Lam}, and the Wedderburn-Malcev Theorem \cite[Sec.\ 11.6, Cor.\ p.\ 211]{Pierce}, \cite[Thm.\ VIII.28]{McD} (also known as the Wedderburn Principal Theorem).

\begin{lem}\label{Nakayama lem} (Nakayama's Lemma)
Let $A$ be a ring with unity. Let $I \subseteq A$ be a left (resp.\ right) ideal of $A$. Then, $I \subseteq \msJ(A)$ if and only if for any left (resp.\ right) $A$-modules $N \subseteq M$ such that $M/N$ is finitely generated, $N + I \cdot M = M$ (resp.\ $N + M \cdot I = M$) implies that $N = M$.
\end{lem}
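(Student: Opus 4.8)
The plan is to prove the biconditional by establishing each implication for left modules; the right-module statement then follows by applying the same argument to the opposite ring. I would use two standard facts about the Jacobson radical (see \cite{Lam}): that $1-x$ is a unit whenever $x \in \msJ(A)$, and that $\msJ(A)$ is the intersection of the maximal left ideals of $A$.

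For the forward implication, suppose $I \subseteq \msJ(A)$ and let $N \subseteq M$ be left $A$-modules with $M/N$ finitely generated and $N + I\cdot M = M$. First I would pass to $\overline{M} = M/N$: the hypothesis $N + IM = M$ says exactly that $I \cdot \overline{M} = \overline{M}$, so the claim $N = M$ becomes the assertion that a finitely generated module $\overline{M}$ with $I\overline{M} = \overline{M}$ is zero. To prove this, choose a generating set $m_1, \dots, m_n$ of $\overline{M}$ of minimal size. If $\overline{M} \ne 0$, then $n \ge 1$, and since $m_n \in \overline{M} = I\overline{M}$ we may write $m_n = \sum_{j=1}^{n} a_j m_j$ with each $a_j \in I$. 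Rearranging gives $(1 - a_n)m_n = \sum_{j=1}^{n-1} a_j m_j$, and because $a_n \in \msJ(A)$ the element $1 - a_n$ is a unit; hence $m_n$ lies in the submodule generated by $m_1, \dots, m_{n-1}$, contradicting the minimality of $n$. Therefore $\overline{M} = 0$, i.e.\ $N = M$.

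For the reverse implication I would argue by contrapositive. Suppose $I \not\subseteq \msJ(A)$. Since $\msJ(A)$ is the intersection of the maximal left ideals of $A$, there is a maximal left ideal $\mathfrak{m}$ with $I \not\subseteq \mathfrak{m}$. Take $M = A$ with its left regular module structure and $N = \mathfrak{m}$; then $M/N = A/\mathfrak{m}$ is cyclic, hence finitely generated, and $N \ne M$ since $\mathfrak{m}$ is proper. On the other hand, because $1 \in A$ we have $I \subseteq I\cdot A = IM$, so $N + IM \supseteq \mathfrak{m} + I = A$ by the maximality of $\mathfrak{m}$, giving $N + IM = M$. This exhibits a pair $N \subsetneq M$ with $M/N$ finitely generated and $N + IM = M$, contradicting the module-theoretic condition; hence that condition forces $I \subseteq \msJ(A)$.

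The argument is routine once the two radical facts are in hand, so there is no deep obstacle; the only point requiring care is in the reverse direction, where $IM = IA$ need not be contained in $I$ (as $I$ is merely a left ideal). This causes no trouble, however, since we need only the inclusion $I \subseteq IM$ together with the maximality of $\mathfrak{m}$ to conclude $N + IM = M$. Finally, since the whole argument is symmetric under passing to the opposite ring, replacing left ideals and left modules by their right-hand counterparts yields the parenthetical ``resp.'' statement verbatim.
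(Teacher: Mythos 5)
The paper does not prove this lemma: it is quoted as a known result from Lam \cite[4.22]{Lam}, so there is no in-paper argument to compare against. Your proof is the standard textbook argument (pass to the finitely generated quotient, take a minimal generating set and use that $1-a$ is a unit for $a \in \msJ(A)$; for the converse, exhibit a maximal left ideal $\mathfrak{m}$ not containing $I$ and take $N=\mathfrak{m} \subsetneq M = A$), and it is correct. One small imprecision worth noting: since $I$ is only a left ideal, the coefficients $a_j$ you extract from $m_n \in I\overline{M}$ lie in $IA$ rather than in $I$ itself; this is harmless because $IA \subseteq \msJ(A)$ ($\msJ(A)$ being a two-sided ideal containing $I$), which is all that the invertibility of $1-a_n$ requires --- the same kind of care you already exercised in the converse direction.
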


\begin{thm}\label{Wedderburn thm} (Wedderburn-Malcev Theorem)
Let $R$ be a finite ring with unity of characteristic $p$. Then, there exists a subalgebra $S$ of $R$ such that $R = S \oplus \msJ(R)$, and $S \cong R/\msJ(R)$ as algebras. Moreover, $S$ is unique up to conjugation by elements of $1+\msJ(R)$.
\end{thm}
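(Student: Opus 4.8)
The plan is to treat $R$ as a finite-dimensional algebra over the perfect field $\F_p$ and to realize the complement $S$ as the image of an algebra-homomorphism section of the canonical surjection $\pi\colon R \to R/J$, where $J = \msJ(R)$. Since $R$ is finite, $J$ is nilpotent, say $J^m = \{0\}$, and the semisimple quotient $A \colonequals R/J$ is separable over $\F_p$ (being, by Wedderburn--Artin and Wedderburn's little theorem, a finite product of matrix algebras over finite fields). The entire argument rests on a single base case, which I would isolate as a lemma: if $N$ is a two-sided ideal of a finite $\F_p$-algebra $B$ with $N^2 = \{0\}$ and $B/N$ separable, then the quotient map $B \to B/N$ splits as an algebra homomorphism. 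Granting this, both existence and uniqueness follow by induction on $m$.

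For existence I would induct on the nilpotency index $m$ of $J$, the case $m \le 1$ being trivial. For $m \ge 2$ the ideal $J^{m-1}$ is square-zero, since $2(m-1) \ge m$, and the radical of $R/J^{m-1}$ is $J/J^{m-1}$, of nilpotency index $m-1$, with $(R/J^{m-1})/(J/J^{m-1}) \cong A$. The inductive hypothesis supplies an algebra section $\tau\colon A \to R/J^{m-1}$ of the induced surjection. Pulling $\tau(A)$ back along $R \to R/J^{m-1}$ yields a subalgebra $R' \subseteq R$ whose radical is the square-zero ideal $J^{m-1}$ and with $R'/J^{m-1} \cong A$; the base-case lemma then splits $R' \to A$, and the image of this splitting is a subalgebra $S$ with $\pi(S) = A$ and $S \cap J = \{0\}$, so that $R = S \oplus J$ and $S \cong A$.

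The base case is the main obstacle and is exactly where separability does the work. I would begin with an arbitrary $\F_p$-linear section $\lambda$ of $B \to B/N$ and measure its failure to be multiplicative by $f(a,b) \colonequals \lambda(a)\lambda(b) - \lambda(ab)$. Applying the quotient map shows $f$ takes values in $N$, and since $N^2 = \{0\}$ the two-sided $B$-action on $N$ factors through $B/N$, making $N$ a $(B/N)$-bimodule for which $f$ is a Hochschild $2$-cocycle. Separability of $B/N$ is precisely the vanishing of $H^2(B/N, -)$; concretely it provides a separability idempotent $e = \sum_i x_i \otimes y_i$ with $\sum_i x_i y_i = 1$ and $ae = ea$ for all $a \in B/N$, which I would use to write down an explicit $1$-cochain correcting $\lambda$ into a genuine algebra section. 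Keeping the argument at the level of the separability idempotent avoids invoking the full Hochschild complex and makes the construction self-contained.

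For uniqueness up to conjugation by $1 + J$, I would again induct on $m$, the crux once more being the square-zero layer. If $\sigma, \sigma'$ are two algebra sections of a square-zero surjection $B \to B/N$, their difference $\delta \colonequals \sigma' - \sigma$ lands in $N$ and, because $N^2 = \{0\}$, is a derivation $B/N \to N$, i.e.\ a Hochschild $1$-cocycle. Separability forces $H^1(B/N, N) = \{0\}$, so $\delta$ is inner, say $\delta(a) = a \cdot m - m \cdot a$ for some $m \in N$; as $m^2 = 0$, the element $1 + m$ is a unit with inverse $1 - m$, and conjugation by a suitable such unit carries $\sigma$ to $\sigma'$. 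Running this up the filtration $J \supseteq J^2 \supseteq \cdots \supseteq J^m = \{0\}$ and composing the resulting units exhibits any two complements as conjugate by a single element of $1 + J$, which completes the proof.
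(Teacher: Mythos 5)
The paper does not actually prove this statement: it is quoted as the classical Wedderburn--Malcev (Wedderburn Principal) Theorem, with the proof deferred to the cited references (Pierce, \S 11.6, and McDonald, Thm.\ VIII.28). Your outline is a correct sketch of the standard argument and is essentially the proof in the cited source: reduce to a square-zero extension by induction on the nilpotency index of $J$ (using that $(J^{m-1})^2=\{0\}$), note that $R/\msJ(R)$ is separable because $\F_p$ is perfect and finite division rings are fields, and then use vanishing of Hochschild $H^2$ to lift a linear section to an algebra section and vanishing of $H^1$ to make the difference of two sections an inner derivation, which yields conjugacy by an element of $1+\msJ(R)$.
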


In light of the Wedderburn-Malcev Theorem, we adopt the following notation.

\begin{Not}\label{S and J notation}
Let $\msS(R)$ be the set of all semisimple subalgebras of $R$ that satisfy the conclusion of the Wedderburn-Malcev Theorem. That is, 
\begin{equation*}
\msS(R) = \{ S \subseteq R : S \text{ is a subalgebra of } R, S \cong R/\msJ(R), \text{ and } R = S \oplus \msJ(R)\}.
\end{equation*}
\end{Not}

We shall prove below in Theorem \ref{Max subring classification} that when $R$ has characteristic $p$, the maximal subrings of $R$ fall into two classes: those containing $\msJ(R)$, and those of the form $S \oplus I$, where $S \in \msS(R)$ and $I$ is a maximal $R$-subideal of $\msJ(R)$. To reach this conclusion, we require a few more technical lemmas.

%\begin{lem}\label{lem1}
%Let $R$ be a ring with unity, let $S$ be a subring of $R$, and let $I$ be a left ideal of $S$. Then, $S + IR$ is a subring of $R$.
%\end{lem}
%\begin{proof}
%Clearly, $S+IR$ is closed under addition. Closure under multiplication follows from the fact that $IR$ is both a left %$S$-module and a right ideal of $R$.
%\end{proof}

\begin{lem}\label{lem2}
Let $J = \msJ(R)$. Let $S$ be a subring of $R$ such that $S/(S \cap J) \cong R/J$. Then, $S \cap J = \msJ(S)$.
\end{lem}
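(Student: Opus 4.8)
The plan is to prove $S \cap J = \msJ(S)$ by establishing the two inclusions separately, exploiting that $R$ (and hence the finite subring $S$) is Artinian, so that every Jacobson radical in sight is nilpotent and equals the largest nilpotent ideal.

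First I would verify that $S \cap J$ is a two-sided ideal of $S$: it is closed under addition, and for $s \in S$, $x \in S \cap J$ the products $sx, xs$ lie in $S$ (as $S$ is a subring) and in $J$ (as $J$ is an ideal of $R$), hence in $S \cap J$. Since $R$ is finite, $J$ is nilpotent, say $J^k = \{0\}$, and then $(S \cap J)^k \subseteq J^k = \{0\}$, so $S \cap J$ is a \emph{nilpotent} ideal of $S$. Because every nilpotent ideal is contained in the Jacobson radical, this gives the inclusion $S \cap J \subseteq \msJ(S)$.

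For the reverse inclusion I would pass to the quotient $S/(S \cap J)$. By hypothesis this ring is isomorphic to $R/J$, which is semisimple, so $\msJ\bigl(S/(S \cap J)\bigr) = \{0\}$. On the other hand, the image of $\msJ(S)$ under the quotient map $S \to S/(S \cap J)$ is always contained in $\msJ\bigl(S/(S \cap J)\bigr)$; and since the first inclusion gives $S \cap J \subseteq \msJ(S)$, that image is exactly $\msJ(S)/(S \cap J)$. Combining these facts forces $\msJ(S)/(S \cap J) = \{0\}$, i.e.\ $\msJ(S) \subseteq S \cap J$. Together with the first inclusion, this yields $\msJ(S) = S \cap J$.

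The one point requiring care is that a subring $S$ in this paper need not contain $1_R$, so I must confirm that the radical facts I invoke hold for (possibly non-unital) rings. This is not a genuine obstacle: $S$ is finite, hence Artinian, and for Artinian rings the radical is nilpotent and is the largest nilpotent ideal, a nil ideal is always quasi-regular and so contained in the radical, and the image of the radical in any quotient lands inside the radical of that quotient. Moreover, the hypothesis $S/(S \cap J) \cong R/J$ forces the quotient to be unital and semisimple even when $S$ itself is not, which is all that is needed to conclude $\msJ\bigl(S/(S \cap J)\bigr) = \{0\}$.
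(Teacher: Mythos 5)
Your proof is correct and follows essentially the same route as the paper: the forward inclusion comes from $S \cap J$ being a nilpotent ideal of $S$ (the paper cites \cite[Prop.\ IV.7]{McD}), and the reverse inclusion from the semisimplicity of $S/(S\cap J) \cong R/J$ (the paper cites \cite[Prop.\ IV.8]{McD}). Your additional care about the non-unital setting is a reasonable elaboration of what the paper leaves implicit, but the argument is the same.
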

\begin{proof}
On the one hand, $S \cap J$ is a nilpotent ideal of $S$ because it is contained in $J$. So, $S \cap J \subseteq \msJ(S)$ by \cite[Prop.\ IV.7]{McD}. On the other hand, $S/(S \cap J)$ is semisimple because it is isomorphic to $R/J$. Hence, $S \cap J \supseteq \msJ(S)$ by \cite[Prop.\ IV.8]{McD}.
\end{proof}

\begin{prop}\label{S cap J prop}
Let $J = \msJ(R)$, and let $T$ be a maximal subring of $R$. Then, $T \cap J$ is a two-sided ideal of $R$.
\end{prop}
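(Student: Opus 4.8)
The plan is to combine the \emph{idealizer} of $T\cap J$ with the maximality of $T$. Write $K = T\cap J$. First I would record the routine fact that $K$ is a two-sided ideal of the subring $T$: for $t\in T$ we have $tK\subseteq T$ (since $K\subseteq T$ and $T$ is closed under multiplication) and $tK\subseteq J$ (since $K\subseteq J$ and $J$ is a two-sided ideal of $R$), so $tK\subseteq T\cap J = K$, and symmetrically $Kt\subseteq K$. Next I would introduce the idealizer
\[
L = \{\, r\in R : rK\subseteq K \text{ and } Kr\subseteq K \,\}.
\]
A direct check shows $L$ is a subring of $R$ containing $1_R$ (if $r,s\in L$ then $(rs)K = r(sK)\subseteq rK\subseteq K$, and likewise on the other side), and the previous observation shows $T\subseteq L$. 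Since $T$ is a maximal subring of $R$, either $L=T$ or $L=R$.

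If $L=R$, then by the definition of $L$ every element of $R$ multiplies $K$ into $K$ on both sides, which is precisely the statement that $K = T\cap J$ is a two-sided ideal of $R$, and we are finished.

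The substantive case is $L=T$ (with $T\ne R$), and here I would invoke the nilpotency of $J$, which holds because $R$ is finite; fix $m$ with $J^m=\{0\}$. The claim is that $J^i\subseteq K$ for every $i\ge 1$, proved by downward induction on $i$. For $i\ge m$ this is immediate since $J^i=\{0\}$. For the inductive step, assuming $J^{i+1}\subseteq K$, take $x\in J^i$; then $xK\subseteq J^iJ = J^{i+1}\subseteq K$ and $Kx\subseteq JJ^i = J^{i+1}\subseteq K$, so $x\in L = T$, while also $x\in J$, giving $x\in T\cap J = K$. Hence $J^i\subseteq K$, and pushing the induction down to $i=1$ yields $J\subseteq K$. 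Since $K\subseteq J$ always holds, this forces $K=J$, which is manifestly a two-sided ideal of $R$. In either case $T\cap J$ is a two-sided ideal, as desired.

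I expect the inductive step in the case $L=T$ to be the crux: the idealizer dichotomy is purely formal, but converting the hypothesis ``$L=T$'' into usable information requires peeling $J$ off one power at a time and feeding each layer back through the defining property of $L$. (One could alternatively dispose of the sub-case where $T$ is not unital at once via Lemma~\ref{1 in max lem}, since then $T$ is itself a maximal two-sided ideal and $T\cap J$ is an intersection of two ideals; but the idealizer argument above handles all maximal subrings uniformly and makes no use of whether $1_R\in T$.)
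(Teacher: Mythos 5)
Your argument is correct, but it reaches the conclusion by a genuinely different route from the paper's. The paper disposes of the case $J \subseteq T$ trivially and, when $J \not\subseteq T$, identifies $T \cap J$ with $\msJ(T)$ via the isomorphism $T/(T \cap J) \cong R/J$ (Lemma \ref{lem2}); it then applies the maximality dichotomy to the subring $T + \msJ(T)R$ and uses Nakayama's Lemma to exclude $T + \msJ(T)R = R$, so that the two-sided ideal generated by $T \cap J$ lands inside $T$, hence inside $T \cap J$. You instead apply the maximality dichotomy to the idealizer $L$ of $K = T \cap J$: the branch $L = R$ says outright that $K$ is a two-sided ideal, and the branch $L = T$ is converted, by downward induction on powers of the nilpotent ideal $J$, into $J \subseteq K$, i.e.\ $K = J$ --- which is exactly the paper's trivial case $J \subseteq T$, here recovered rather than assumed. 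Your inductive step is sound ($x \in J^i$ gives $xK \subseteq J^{i+1} \subseteq K$ and $Kx \subseteq J^{i+1} \subseteq K$, so $x \in L = T$ and then $x \in T \cap J = K$), and the appeal to nilpotency of $J$ is legitimate since $R$ is finite throughout this part of the paper. What your approach buys is the avoidance of both Nakayama's Lemma and the identification $T \cap J = \msJ(T)$, handling unital and non-unital maximal subrings uniformly; what it costs is an explicit reliance on nilpotency of $J$, whereas the paper's Nakayama argument is phrased to work whenever $R$ is finitely generated as a $T$-module.
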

\begin{proof}
The result is trivial if $J \subseteq T$, so assume that $J \not\subseteq T$. Let $\mcJ$ denote the two-sided ideal of $R$ generated by $T \cap J$. Certainly, $T \cap J \subseteq \mcJ$; and, since $J$ is a two-sided ideal of $R$, we have $\mcJ \subseteq J$. So, it suffices to prove that $\mcJ \subseteq T$.

Apply Nakayama's Lemma with $A=T$, $I=T\cap J$, $M=R$, and $N=T$. Since $T$ is maximal and $J \not\subseteq T$, we have
\begin{equation*}
T/(T \cap J) \cong (T+J)/J = R/J
\end{equation*}
so $T \cap J = \msJ(T)$ by Lemma \ref{lem2}. Next, note that $T + \msJ(T)R$ is a subring of $R$. By the maximality of $T$, either $T+\msJ(T)R = T$ or $T+\msJ(T)R=R$. If $T+\msJ(T)R=R$, then $T=R$ by Nakayama's Lemma, which is a contradiction. Thus, $T + \msJ(T)R = T$, implying that $\mcJ=\msJ(T)R \subseteq T$, which completes the proof.
\end{proof}

\begin{thm}\label{Max subring classification}
Let $T$ be a maximal subring of $R$. Let $J = \msJ(R)$.
\begin{enumerate}[(1)]
\item $J \subseteq T$ if and only if $T$ is the inverse image of a maximal subring of $R/J$.

\item $J \not\subseteq T$ if and only if $T = S \oplus \msJ(T)$, where $S \in \msS(R)$ and $\msJ(T) = T \cap J$ is an ideal of $R$ that is maximal among the subideals of $R$ contained in $J$.
\end{enumerate}
\end{thm}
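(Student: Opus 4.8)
The plan is to handle the two parts separately, noting that the reverse implications are essentially immediate while the forward implications carry the real content. Throughout, I would lean on the earlier structural results: Proposition \ref{S cap J prop} (that $T \cap J$ is a two-sided ideal of $R$), Lemma \ref{lem2} (that $T \cap J = \msJ(T)$ whenever $T/(T \cap J) \cong R/J$), the Wedderburn--Malcev Theorem, and Lemma \ref{1 in max lem}. For part (1), the reverse direction is trivial, since any inverse image of a subring of $R/J$ under the quotient map $R \to R/J$ contains the kernel $J$. For the forward direction I would invoke the lattice correspondence between subrings of $R$ containing $J$ and subrings of $R/J$: assuming $J \subseteq T$, the image $T/J$ is a proper subring of $R/J$, and if $T/J \subseteq U/J \subsetneq R/J$ for some subring $U \supseteq J$, then $T \subseteq U \subsetneq R$ forces $U = T$ by maximality of $T$. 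Hence $T/J$ is maximal in $R/J$ and $T$ is its inverse image.

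For part (2), the reverse direction is again short: if $T = S \oplus \msJ(T)$ with $S \in \msS(R)$, then $J \subseteq T$ would give $T \cap J = J$, hence $\msJ(T) = J$ and $T = S \oplus J = R$, contradicting that $T$ is proper; thus $J \not\subseteq T$. The forward direction is the substantive case, which I would carry out in order. First, since $J \not\subseteq T$, Lemma \ref{1 in max lem} forces $1_R \in T$: the alternative---that $T$ is a maximal two-sided ideal with $R/T \cong \F_p$---would place $J$ inside the maximal ideal $T$, a contradiction. Hence $T$ is a finite unital $\F_p$-algebra, and Proposition \ref{S cap J prop} together with Lemma \ref{lem2} give that $T \cap J = \msJ(T)$ is a two-sided ideal of $R$ with $T/(T \cap J) \cong R/J$. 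Applying Wedderburn--Malcev to $T$ then produces a subalgebra $S \subseteq T$ with $T = S \oplus \msJ(T)$ and $S \cong T/\msJ(T) \cong R/J$.

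It remains to show that $S \in \msS(R)$ and that $T \cap J$ is a maximal $R$-subideal of $J$. For the first, I would observe that $S \cap J \subseteq S \cap \msJ(T) = \{0\}$, and then a dimension count over $\F_p$, using $\dim_{\F_p} S = \dim_{\F_p}(R/J) = \dim_{\F_p} R - \dim_{\F_p} J$, yields $S + J = R$, so $R = S \oplus J$ and $S \in \msS(R)$. For the maximality, given any two-sided ideal $I$ of $R$ with $T \cap J \subseteq I \subsetneq J$, the set $S \oplus I$ is a subring (closure under multiplication uses that $I$ is a two-sided ideal of $R$), it is proper because $\dim_{\F_p}(S \oplus I) = \dim_{\F_p} S + \dim_{\F_p} I < \dim_{\F_p} R$ when $I \ne J$, and it contains $T = S \oplus (T \cap J)$. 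Maximality of $T$ then forces $T = S \oplus I$, and intersecting with $J$ (using $S \cap J = \{0\}$) gives $I = T \cap J$. Thus no two-sided ideal of $R$ lies strictly between $T \cap J$ and $J$.

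The main obstacle I anticipate is precisely this forward direction of part (2): recognizing the Wedderburn complement $S$ of $T$ as a genuine complement of $J$ in all of $R$ (the vanishing intersection $S \cap J = \{0\}$ together with the dimension argument), and then establishing maximality of $T \cap J$ through the auxiliary subrings $S \oplus I$. By comparison, the verification that $1_R \in T$ and the correspondence-theorem bookkeeping in part (1) are routine.
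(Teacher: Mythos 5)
Your proposal is correct and follows essentially the same route as the paper: Lemma \ref{1 in max lem} to get $1_R \in T$, Wedderburn--Malcev applied to $T$, Lemma \ref{lem2} and Proposition \ref{S cap J prop} to identify $\msJ(T) = T \cap J$ as an ideal of $R$, and maximality of $T$ to conclude maximality of $T \cap J$ among subideals. In fact you supply some details the paper leaves implicit (the verification that $R = S \oplus J$ via $S \cap J = \{0\}$ and a dimension count, and the explicit $S \oplus I$ comparison for the maximality of $T \cap J$); the only difference is that the paper's converse direction additionally shows every $S \oplus I$ with $I$ a maximal subideal is itself a maximal subring, a slightly stronger fact used later, whereas you prove only the literal reverse implication.
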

\begin{proof}
Part (1) is clear. For part (2), first let $S \in \msS(R)$ and let $I \subseteq J$ be an ideal of $R$ that is maximal among the subideals of $J$. Then, $S \oplus I$ is a maximal subring of $R$, and $\msJ(S \oplus I) = I$ because $S \cap J = \{0\}$.

Conversely, assume that $J \not\subseteq T$. Then, $T$ is not a maximal ideal of $R$, so $1 \in T$ by Lemma \ref{1 in max lem}. Hence, we may apply Theorem \ref{Wedderburn thm} to $T$, which gives $T = S \oplus \msJ(T)$ for some $S \in \msS(T)$. Next, since $T$ is maximal, we have $T/(T \cap J) \cong R/J$, and so $\msJ(T) = T \cap J$ by Lemma \ref{lem2}. It follows that $S \in \msS(R)$, because $S \cong T/\msJ(T) \cong R/J$. Finally, $\msJ(T) \subseteq J$ is a two-sided ideal of $R$ by Proposition \ref{S cap J prop}. By assumption, $\msJ(T) \ne J$, and the maximality of $T$ implies that $\msJ(T)$ is maximal among the subideals of $R$ contained in $J$.
\end{proof}

\begin{cor}\label{J2 cor}
Let $J = \msJ(R)$. Then, any maximal subring $T$ of $R$ contains $J^2$. Consequently, $\sigma(R) = \sigma(R/J^2)$.
\end{cor}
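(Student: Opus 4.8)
The plan is to prove the two assertions separately: first that every maximal subring $T$ of $R$ contains $J^2$, using the dichotomy from Theorem \ref{Max subring classification}, and then to deduce the equality $\sigma(R) = \sigma(R/J^2)$ by the same passage-to-quotient argument used in Proposition \ref{char p prop}. Throughout I retain the standing hypothesis that $R$ is a finite ring of characteristic $p$, and I take $R$ to be coverable (otherwise $R$, and hence its quotient $R/J^2$, fails to be monogenic and both covering numbers are infinite, so there is nothing to prove).

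For the containment, let $T$ be a maximal subring and split into the two cases of Theorem \ref{Max subring classification}. If $J \subseteq T$, then trivially $J^2 \subseteq J \subseteq T$. The genuine case is $J \not\subseteq T$, where Theorem \ref{Max subring classification}(2) gives $T = S \oplus \msJ(T)$ with $\msJ(T) = T \cap J$ a two-sided ideal of $R$ that is maximal among the subideals of $R$ properly contained in $J$; note that $J \not\subseteq T$ forces $\msJ(T) \subsetneq J$. I would consider the two-sided ideal $J^2 + \msJ(T)$, which satisfies $\msJ(T) \subseteq J^2 + \msJ(T) \subseteq J$. By the maximality of $\msJ(T)$, this ideal equals either $\msJ(T)$ or $J$. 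If it equals $J$, then applying Nakayama's Lemma (Lemma \ref{Nakayama lem}) with $A = R$, $I = J = \msJ(R)$, $M = J$, and $N = \msJ(T)$ — noting $N + I\cdot M = \msJ(T) + J^2 = J = M$ and that $M/N$ is finitely generated since $R$ is finite — would give $\msJ(T) = J$, contradicting $\msJ(T) \subsetneq J$. Hence $J^2 + \msJ(T) = \msJ(T)$, i.e.\ $J^2 \subseteq \msJ(T) \subseteq T$. (Equivalently, one can pass to $\olR = R/\msJ(T)$, observe that $\msJ(T)$'s maximality makes the Jacobson radical $J/\msJ(T)$ of $\olR$ a minimal nonzero two-sided ideal, and rule out $(J/\msJ(T))^2 = J/\msJ(T)$ by Nakayama.)

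With the containment established, the equality follows exactly as in Proposition \ref{char p prop}. We always have $\sigma(R) \le \sigma(R/J^2)$ by Lemma \ref{lem:basics}(3). For the reverse, since $R$ is finite every proper subring lies in a maximal one, so by Lemma \ref{lem:basics}(4) a minimal cover may be taken to consist of maximal subrings, say $R = \bigcup_{i=1}^t M_i$ with $t = \sigma(R)$. Each $M_i$ contains $J^2$ and is proper, so $M_i/J^2$ is a proper subring of $R/J^2$, and these images cover $R/J^2$; thus $\sigma(R/J^2) \le t = \sigma(R)$, giving equality. I expect the only real obstacle to be the case $J \not\subseteq T$ of the containment — specifically, correctly combining the maximality of $\msJ(T)$ with Nakayama's Lemma to exclude the possibility $J^2 + \msJ(T) = J$; the remainder is routine bookkeeping.
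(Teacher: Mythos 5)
Your proposal is correct and follows essentially the same route as the paper: the same case split via Theorem \ref{Max subring classification}, the same use of the maximality of $\msJ(T)$ together with Nakayama's Lemma (with $I = M = J$, $N = \msJ(T)$) to rule out $\msJ(T) + J^2 = J$, and the same lifting/projection argument for the equality of covering numbers. No gaps.
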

\begin{proof}
Let $T$ be a maximal subring of $R$. If $J \subseteq T$, then certainly $J^2 \subseteq T$. If $J \not\subseteq T$, then by Theorem \ref{Max subring classification}, $\msJ(T)$ is maximal among the subideals of $R$ contained in $J$. Suppose that $J^2 \not\subseteq \msJ(T)$. Then, $\msJ(T) + J^2 = J$ by the maximality of $\msJ(T)$. Applying Nakayama's Lemma with $N=\msJ(T)$, $I=J$, and $M=J$ yields $\msJ(T)=J$, which is a contradiction. So, $J^2 \subseteq \msJ(T) \subseteq T$.

For the equality of the covering numbers, we always have $\sigma(R) \le \sigma(R/J^2)$. But, given a minimal cover $\mcC$ of $R$ by maximal subrings, $J^2 \subseteq T$ for all $T \in \mcC$. Hence, $T/J^2$ is a proper subring of $R/J^2$, and so $\{T/J^2 : T \in \mcC\}$ covers $R/J^2$. Thus, $\sigma(R/J^2) \le \sigma(R)$.
\end{proof}

Summarizing all the results of this section yields the full reduction theorem on rings with a finite covering number.

\begin{thm}\label{full reduction thm}
Let $R$ be a (unital, associative) ring such that $\sigma(R)$ is finite. Then, there exists a two-sided ideal $I$ of $R$ such that $R/I$ is finite; $R/I$ has characteristic $p$; $\msJ(R/I)^2 = \{0\}$; and $\sigma(R/I) = \sigma(R)$.
\end{thm}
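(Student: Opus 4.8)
The plan is to assemble the theorem directly from the two principal outputs of this section, namely Theorem \ref{reduction to char p} and Corollary \ref{J2 cor}, applying them in succession and then bookkeeping the ideals via the correspondence theorem. Since the statement is essentially a concatenation of the reductions already established, the work lies almost entirely in confirming that the four claimed properties survive the composition.

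First I would invoke Theorem \ref{reduction to char p} to obtain a two-sided ideal $I_0$ of $R$ with $R/I_0$ finite of characteristic $p$ and $\sigma(R/I_0) = \sigma(R)$. Writing $\olR = R/I_0$ and $J = \msJ(\olR)$, I would then apply Corollary \ref{J2 cor} to the finite characteristic-$p$ ring $\olR$, which yields $\sigma(\olR) = \sigma(\olR/J^2)$.

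Next, by the correspondence theorem for the canonical surjection $R \to \olR$, the ideal $J^2$ of $\olR$ pulls back to a two-sided ideal $I$ of $R$ containing $I_0$, with $R/I \cong \olR/J^2$. I would verify the four desired properties for this $I$: finiteness of $R/I$ is immediate, since it is a quotient of the finite ring $\olR$; the chain $\sigma(R/I) = \sigma(\olR/J^2) = \sigma(\olR) = \sigma(R)$ records the covering-number equality; and the characteristic is $p$ because $R/I$ is a quotient of the characteristic-$p$ ring $\olR$, so its characteristic is $1$ or $p$, and it cannot be $1$ since $\sigma(R/I)$ is finite forces $R/I$ to be coverable, hence nonzero.

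The one point deserving genuine care---and the closest thing to an obstacle---is the condition $\msJ(R/I)^2 = \{0\}$. Here I would observe that, because $J$ is nilpotent, the radical of $\olR/J^2$ is exactly $J/J^2$: the quotient $(\olR/J^2)/(J/J^2) \cong \olR/J$ is semisimple while $J/J^2$ is a nilpotent ideal. Since the product of any two elements of $J$ lies in $J^2$, we obtain $(J/J^2)^2 = \{0\}$, and transporting this along the isomorphism $R/I \cong \olR/J^2$ gives $\msJ(R/I)^2 = \{0\}$, completing the verification.
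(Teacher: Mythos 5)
Your proposal is correct and follows exactly the route the paper intends: the paper's ``proof'' is simply the observation that Theorem \ref{reduction to char p} and Corollary \ref{J2 cor} compose, and you have carried out that composition with the appropriate bookkeeping (correspondence theorem, identification of $\msJ(\olR/J^2)$ with $J/J^2$, and the nonzero-quotient check for the characteristic). No gaps.
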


%%%%%%%%%%%%%%%%%%%%%%%%%%%%%%%%%%%%%%%%%%%%%%%%%%%%%%%%%%
%%%%%%%%%%%%%%%%%%%%%%%%%%%%%%%%%%%%%%%%%%%%%%%%%%%%%%%%%%
%%%%%%%%%%%%%%%%%%%%%%%%%%%%%%%%%%%%%%%%%%%%%%%%%%%%%%%%%%
\section{Covering Numbers of Commutative Rings}\label{Commutative section}

Throughout this section, $R$ is a finite commutative ring of characteristic $p$, $J = \msJ(R)$, and $J^2=\{0\}$. From the Wedderburn-Malcev Theorem, we know that there exists a semisimple subring $S \subseteq R$ such that $S \cong R/J$ and $R = S \oplus J$. Furthermore, $S$ is unique up to conjugation by the units in $1 + J$. However, since $R$ is commutative, this conjugation action is trivial. We conclude that there is a \textit{unique} semisimple subring $S$ of $R$ such that $R = S \oplus J$.

The decomposition $R=S \oplus J$ suggests two methods for covering $R$ by subrings. First, given a cover $S = \bigcup_{i=1}^t S_i$ of $S$ by maximal subrings $S_i$ of $S$, we have $R = \bigcup _{i=1}^t (S_i \oplus J)$ (this is equivalent to lifting a cover of $R/J$ up to $R$). Second, one could attempt to cover $J$ by maximal subideals of $R$. If $J = \bigcup_{j=1}^m I_j$, where each $I_j$ is an ideal of $R$, then $S \oplus I_j$ is a subring of $R$, and $R = \bigcup_{j=1}^m (S \oplus I_j)$. We proceed to show that, if $R$ is coverable, then a minimal cover can always be produced by one of these methods.

Let us first establish some notation and consider when and how $J$ could be covered by subideals. Since $S$ is commutative and semisimple, it is a direct sum of finite fields: $S = F_1 \oplus \cdots \oplus F_n$, where each $F_i$ is a field. Since $R$ is unital, we have $S \cdot J \ne \{0\}$ if $J \ne \{0\}$. However, it is possible that $F_i \cdot J =\{0\}$ for some $1 \le i \le n$.

\begin{Def}\label{def:Lambda}
Let $S = F_1 \oplus \cdots \oplus F_n$ as above. For each $1 \le i \le n$, let $e_i = 1_{F_i}$, so that $1_R = e_1 + \cdots + e_n$ and $\{e_1, \ldots, e_n\}$ is a set of primitive orthogonal idempotents. Define $\Lambda \colonequals \{1 \le i \le n : e_i J \ne \{0\}\}$. Then, $e_i J$ is a nonzero $F_i$-vector space for each $i \in \Lambda$. For every $i \in \Lambda$, define $d_i \colonequals \dim_{F_i} e_i J$. Finally, let $\Lambda_2 \colonequals \{i \in \Lambda : d_i \ge 2\}$.

We say that $J$ is \textit{coverable} if $J = \bigcup_{j=1}^m I_j$ for some proper $R$-subideals $I_j \subseteq J$. If such a union is possible, then we let $\sigma(J)$ be the size of a minimal cover. Likewise, we say that $e_i J$ is \textit{coverable} if $e_i J = \bigcup_{j=1}^m V_j$ for some proper $F_i$-submodules $V_j \subseteq e_i J$; and $\sigma(e_i J)$ is the size of a minimal cover (if one exists).
\end{Def}

Since $e_i J$ is an $F_i$-vector space, computing $\sigma(e_i J)$ is equivalent to finding a minimal cover of a finite dimensional vector space by subspaces. The existence of such covers, as well as the relevant covering numbers, are well known in these cases (for instance, see \cite{Khare}).

\begin{prop}\label{Vector space coverings}
With notation as above, $e_i J$ is coverable if and only if $d_i \ge 2$. If $e_i J$ is coverable, then $\sigma(e_i J) = |F_i|+1$.
\end{prop}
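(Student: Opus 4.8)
The plan is to reduce this to the classical fact that a finite-dimensional vector space over a finite field $F$ can be covered by proper subspaces precisely when its dimension is at least $2$, in which case the minimal number of subspaces needed is $|F|+1$. Write $q = |F_i|$, $V = e_i J$, and $d = d_i = \dim_{F_i} V$; since $F_i$ is a field, the $F_i$-submodules of $V$ are exactly its $F_i$-subspaces. If $d \le 1$, then the only proper subspace of $V$ is $\{0\}$, which consists of a single element and hence cannot cover the $q^d \ge 2$ elements of $V$; thus $V$ is not coverable. This establishes the forward direction of the coverability criterion, and it remains to treat the case $d \ge 2$.

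For the upper bound, I would exhibit an explicit cover of size $q+1$. Choose a subspace $W \subseteq V$ of codimension $2$ and let $\pi \colon V \to V/W$ be the quotient map, so that $V/W$ is a $2$-dimensional $F_i$-space. The space $V/W$ has exactly $q+1$ distinct one-dimensional subspaces $L_0, \dots, L_q$, and every vector of $V/W$ lies in one of them, so $V/W = \bigcup_{j=0}^{q} L_j$. Pulling back, each $\pi^{-1}(L_j)$ is a proper subspace of $V$ (of codimension $1$), and $\bigcup_{j=0}^{q} \pi^{-1}(L_j) = V$. Hence $V$ is coverable and $\sigma(V) \le q+1 = |F_i|+1$, which also completes the reverse direction of the coverability criterion.

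For the matching lower bound, I would argue by counting nonzero vectors. Suppose $V = \bigcup_{j=1}^{m} V_j$ is a cover by proper subspaces. Every proper subspace of $V$ has dimension at most $d-1$, so $|V_j| \le q^{d-1}$ and therefore $|V_j \setminus \{0\}| \le q^{d-1}-1$ for each $j$. Since the $V_j$ all overlap in the zero vector, the sensible count is over nonzero vectors: as these must all be covered, we obtain
\[
q^d - 1 = |V \setminus \{0\}| \le \sum_{j=1}^{m} |V_j \setminus \{0\}| \le m\,(q^{d-1}-1).
\]
Rearranging gives $m \ge \frac{q^d-1}{q^{d-1}-1} = q + \frac{q-1}{q^{d-1}-1} > q$, and since $m$ is an integer this forces $m \ge q+1$. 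Combined with the upper bound, $\sigma(V) = q+1 = |F_i|+1$.

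I expect the only real subtlety to lie in the lower bound, where it is tempting to count total elements of the $V_j$ and thereby lose the constant, since every subspace contains $0$; restricting the count to nonzero vectors is what makes the inequality sharp. Note that this argument requires no assumption that the $V_j$ are hyperplanes—allowing smaller subspaces only enlarges the bound on $m$—so the same computation handles arbitrary proper-subspace covers at once.
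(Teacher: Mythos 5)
Your proof is correct. Note that the paper does not actually prove this proposition: it treats it as a known fact about covering a finite-dimensional vector space over a finite field by proper subspaces and simply cites Khare, \emph{Vector spaces as unions of proper subspaces}. Your argument is the standard self-contained one --- the upper bound by pulling back the $|F_i|+1$ lines of a two-dimensional quotient, and the lower bound by counting nonzero vectors, where discarding the shared zero vector is exactly what makes the inequality $m \ge \frac{q^d-1}{q^{d-1}-1} > q$ sharp --- and it is complete; the only cosmetic point is that the case $d \le 1$ need only be considered for $d = 1$, since $e_iJ \ne \{0\}$ for $i \in \Lambda$.
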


As we now demonstrate, $J = \bigoplus_{i=1}^n e_i J$ is a decomposition of $J$ into $R$-subideals, and every subideal of $J$ respects this decomposition. Consequently, questions about covering $J$ reduce to the analogous problems for $e_i J$.

\begin{lem}\label{Subideal decomposition}
Let $I \subseteq J$ be an ideal of $R$. 
\begin{enumerate}[(1)]
\item $e_i I$ is an ideal of $R$ for each $1 \le i \le n$, and $I = \bigoplus_{i=1}^n e_i I$.
\item There exists an ideal $\overline{I}$ of $R$ such that $J = I \oplus \overline{I}$.
\end{enumerate}
\end{lem}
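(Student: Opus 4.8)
The plan is to exploit the orthogonal idempotents $e_1, \dots, e_n$ together with the commutativity of $R$, and for part (2) to bring in the hypothesis $J^2 = \{0\}$ in an essential way.

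For part (1), I would first observe that since $e_1 + \cdots + e_n = 1_R$, every $x \in I$ satisfies $x = \sum_{i=1}^n e_i x$, and each $e_i x$ lies in $I$ because $I$ is an ideal; hence $I = \sum_{i=1}^n e_i I$. To see that each $e_i I$ is itself an ideal, take $r \in R$ and $x \in I$: using commutativity, $r(e_i x) = e_i(rx) \in e_i I$ since $rx \in I$, and closure under addition is inherited from $I$. The orthogonality relations $e_i e_j = \delta_{ij} e_i$ then give directness of the sum: if $z \in e_i I \cap \sum_{j \ne i} e_j I$, applying $e_i$ to the two expressions for $z$ gives $z$ on one side and $0$ on the other, so $z = 0$. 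This yields $I = \bigoplus_{i=1}^n e_i I$.

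For part (2), the idea is to build the complement one idempotent-component at a time. Taking $I = J$ in part (1) gives $J = \bigoplus_{i=1}^n e_i J$, and each $e_i I \subseteq e_i J$ is an $F_i$-subspace of the $F_i$-vector space $e_i J$. Over a field every subspace has a complement, so I would choose an $F_i$-subspace $W_i$ with $e_i J = e_i I \oplus W_i$. The crucial step is to verify that $W_i$ is an ideal of $R$, not merely an $F_i$-subspace. This is where $J^2 = \{0\}$ enters: writing $r \in R$ as $r = s + j$ with $s \in S$ and $j \in J$, for $w \in e_i J \subseteq J$ we have $rw = sw$ because $jw \in J^2 = \{0\}$; and since $s = \sum_k s_k$ with $s_k \in F_k$ and $s_k e_i = \delta_{ik} s_k$, in fact $rw = s_i w$ is exactly the $F_i$-scalar multiple of $w$ by $s_i \in F_i$. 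Consequently the $R$-module structure on $e_i J$ coincides with its $F_i$-vector-space structure, so the $F_i$-subspace $W_i$ is automatically an $R$-submodule, and being contained in $J$ it is an ideal of $R$.

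Setting $\overline{I} \colonequals \bigoplus_{i=1}^n W_i$ then finishes the argument: $\overline{I}$ is an ideal, and combining the decompositions $e_i J = e_i I \oplus W_i$ across all $i$ with $J = \bigoplus_i e_i J$ and $I = \bigoplus_i e_i I$ gives $J = I \oplus \overline{I}$. I expect the one genuine subtlety to be the identification of the $R$-action on $e_i J$ with the $F_i$-scalar action, since this is what upgrades a bare vector-space complement into an honest ideal; the rest is bookkeeping with orthogonal idempotents.
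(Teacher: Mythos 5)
Your proof is correct. Part (1) is essentially the paper's argument (the paper checks pairwise triviality $e_iI\cap e_jI=\{0\}$ via orthogonality; your version via applying $e_i$ to an element of $e_iI\cap\sum_{j\ne i}e_jI$ is the same computation). For part (2) you take a slightly different, more hands-on route: the paper produces the complement $\overline{I}$ in one step by noting that $S$ is semisimple, so $J$ and $I$ are semisimple left $S$-modules and $I$ has an $S$-module complement in $J$, and then checks that this complement is an ideal by writing $r=s+x$ and using $J^2=\{0\}$ to get $r\alpha=s\alpha$ (with commutativity supplying two-sidedness). You instead build the complement componentwise out of part (1), choosing an $F_i$-subspace complement $W_i$ of $e_iI$ in $e_iJ$ and verifying that the $R$-action on $e_iJ$ collapses to the $F_i$-scalar action, i.e.\ $rw=s_iw$ for $w\in e_iJ$. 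That identification is exactly the same mechanism the paper uses ($r\alpha=s\alpha$, refined through the orthogonal idempotents), so the two proofs are really the same argument packaged differently: the paper's version is shorter because it cites semisimple module theory once, while yours is more self-contained and makes explicit the fact---used repeatedly later in the section---that $R$-subideals of $e_iJ$ are precisely $F_i$-subspaces. No gaps.
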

\begin{proof}
(1) For each $i$, $e_i I$ is an ideal of $R$ because $R$ is commutative.  Consider $\alpha \in e_i I \cap e_j I$ with $i \ne j$. We have $\alpha = e_i x = e_j y$ for some $x, y \in I$. But, since $e_i$ and $e_j$ are orthogonal, \begin{equation*}
\alpha = e_i x = e_i(e_i x) = e_i(e_j y) = 0.
\end{equation*}
Thus, $e_i I \cap e_j I = \{0\}$ and $I = \bigoplus_{i=1}^n e_i I$.

(2) Since $S$ is semisimple, both $J$ and $I$ are semisimple left $S$-modules. So, there exists a left $S$-module $\overline{I} \subseteq J$ such that $J = I \oplus \overline{I}$ (as left $S$-modules). We claim that $\overline{I}$ is an ideal of $R$. Let $r \in R$ and $\alpha \in \overline{I}$, and write $r = s + x$ for some $s \in S$ and $x \in J$. Since $J^2=\{0\}$, we have $r\alpha = s\alpha \in \overline{I}$. Thus, $\overline{I}$ is a left $R$-module; but, $R$ is commutative, so $\overline{I}$ is a two-sided ideal of $R$, as desired.
\end{proof}

\begin{thm}\label{When J is coverable}
The following are equivalent.
\begin{enumerate}[(1)]
\item $J$ is coverable by $R$-subideals.
\item At least one $e_iJ$ is coverable by $F_i$-submodules.
\item $\Lambda_2 \ne \varnothing$.
\end{enumerate}
If any of these conditions hold, then $\sigma(J) = \min_{i \in \Lambda_2}(|F_i|) + 1$.
\end{thm}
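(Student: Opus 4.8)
The plan is to establish the three equivalences first and then pin down $\sigma(J)$, with the genuine work concentrated in the lower bound. The equivalence of (2) and (3) is immediate from Proposition \ref{Vector space coverings}, since $e_iJ$ is coverable exactly when $d_i \ge 2$. For (2) $\Rightarrow$ (1), I would first record that because $J^2 = \{0\}$ the $R$-action on $J$ factors through $S$, and since $e_ke_i = 0$ for $k \ne i$ this action on $e_iJ$ is just scalar multiplication by $F_i$; hence every $F_i$-subspace of $e_iJ$ is automatically an $R$-subideal of $J$. Given a cover $e_iJ = \bigcup_j V_j$ by proper $F_i$-subspaces, Lemma \ref{Subideal decomposition} lets me form the proper subideals $I_j \colonequals V_j \oplus \bigoplus_{k \ne i} e_kJ$, whose union is $J$. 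For (1) $\Rightarrow$ (3) I argue the contrapositive: if $\Lambda_2 = \varnothing$ then each $e_iJ$ with $i \in \Lambda$ is one-dimensional, so by Lemma \ref{Subideal decomposition}(1) every subideal has the form $\bigoplus_{i \in T} e_iJ$ for some $T \subseteq \Lambda$, and a proper subideal must omit some $e_jJ$ with $j \in \Lambda$; the element $x = \sum_{i \in \Lambda} x_i$ with each $x_i \in e_iJ$ nonzero then lies in no proper subideal, so $J$ is not coverable.

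For the formula, the easy direction reuses the lifting above. Writing $q = \min_{i \in \Lambda_2}|F_i|$, attained at some $i_0$, a minimal subspace cover of $e_{i_0}J$ has size $|F_{i_0}|+1 = q+1$ by Proposition \ref{Vector space coverings}, and lifting it produces a cover of $J$ by $q+1$ proper subideals, so $\sigma(J) \le q+1$.

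The crux is the lower bound. Suppose $J = \bigcup_{j=1}^m I_j$ is a cover by proper subideals with $m \le q$; I will produce an uncovered element, contradicting the cover. By Lemma \ref{Subideal decomposition}(1) write $I_j = \bigoplus_i U_i^{(j)}$ with each $U_i^{(j)} \subseteq e_iJ$ an $F_i$-subspace. The obstacle here is that a subideal may be \emph{full} in the minimal coordinate $i_0$ yet proper in some other coordinate, so naively projecting the cover onto $e_{i_0}J$ gives no lower bound on $m$; the direct-sum decomposition of subideals is exactly what lets me route around this. First, set $x_i$ to a nonzero vector for every $i \in \Lambda \setminus \Lambda_2$ (each such $e_iJ$ is one-dimensional); this already defeats every box $I_j$ having $U_i^{(j)} = \{0\}$ in one of these coordinates. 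Each surviving box is full in all one-dimensional coordinates, so, being proper, it satisfies $U_{c(j)}^{(j)} \subsetneq e_{c(j)}J$ for some coordinate $c(j) \in \Lambda_2$, which I assign to it. Grouping the surviving boxes by the value of $c(j)$ and letting $m_i$ be the number assigned to $i \in \Lambda_2$, the pigeonhole inequality $\sum_{i \in \Lambda_2} m_i \le m \le q \le |F_i|$ gives $m_i < |F_i|+1 = \sigma(e_iJ)$ for each such $i$. Hence the $m_i$ proper subspaces $\{U_i^{(j)} : c(j)=i\}$ cannot cover $e_iJ$, and I may choose $x_i \in e_iJ$ outside their union (choosing $x_i$ freely for the remaining $i \in \Lambda_2$). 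The element $x = \sum_i x_i$ then avoids every box, the contradiction I sought. Thus $\sigma(J) \ge q+1$, giving equality. The only delicate point is this final paragraph: the interplay between the per-coordinate vector-space covering bound and the pigeonhole inequality $\sum_i m_i \le m$ is precisely where the minimum over $\Lambda_2$ enters, so it must be set up carefully.
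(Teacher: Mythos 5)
Your proof is correct, and the three equivalences are handled exactly as in the paper (for (1)~$\Rightarrow$~(3) the paper shows the element $x=\sum_{i\in\Lambda}x_i$ \emph{generates} $J$ as an $R$-module, whereas you observe that every proper subideal must omit a whole coordinate $e_jJ$; these are the same observation in different clothing). Where you genuinely diverge is the lower bound for $\sigma(J)$. The paper proves it by induction on $|\Lambda|$: it splits $J=e_1J\oplus J'$ with $J'=\bigoplus_{i>1}e_iJ$, writes each maximal subideal as $L_j\oplus M_j$, invokes the inductive hypothesis on $J'$ to find a $y\in J'$ avoiding the proper $M_j$, and pairs it with an $x\in e_1J$ avoiding the proper $L_j$. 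You instead run a single global argument: decompose every box as $I_j=\bigoplus_i U_i^{(j)}$ via Lemma \ref{Subideal decomposition}, kill the boxes that vanish in a one-dimensional coordinate, assign each surviving proper box to a coordinate of $\Lambda_2$ where it is proper, and use the pigeonhole bound $\sum_i m_i\le m\le\min_{i\in\Lambda_2}|F_i|<\sigma(e_iJ)$ to pick each component $x_i$ outside the relevant subspaces. Both arguments rest on the same two pillars (the coordinatewise decomposition of subideals and the vector-space covering number $|F_i|+1$), but your version avoids the induction entirely and makes the role of the minimum over $\Lambda_2$ more transparent; the paper's inductive version is shorter to state per step but hides the bookkeeping in the inductive hypothesis. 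One small point worth making explicit in your write-up: a surviving box that is full in every coordinate would equal $J$, which is why properness forces a coordinate in $\Lambda_2$ where it is proper --- you do say this, and it is the hinge of the assignment step.
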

\begin{proof}
(3) $\Rightarrow$ (2) holds by Proposition \ref{Vector space coverings}. For (2) $\Rightarrow$ (1), assume that $e_i J = \bigcup_{j=1}^m V_j$ is a covering of $e_i J$. Then, $e_i V_j = V_j$ for each $1 \le j \le m$, and 
\begin{equation*}
I_j\colonequals\big( \bigoplus_{k \ne i} e_k J \big) \oplus e_i V_j
\end{equation*}
is a proper $R$-subideal of $J$. We then have $J = \bigcup_{j=1}^m I_j$, and $J$ is coverable.

Next, for (1) $\Rightarrow$ (3), suppose that $\Lambda_2 = \varnothing$. So, $d_i = 1$ for all $i \in \Lambda$. For each $i \in \Lambda$, pick a nonzero $x_i \in e_i J$, and let $x=\sum_{i \in \Lambda} x_i$. We claim that $x$ generates $J$. Indeed, since $d_i=1$, $e_i J = F_i x_i$. So, given $y \in J$, we have $y = \sum_{i \in \Lambda} e_i y$, and for each $i$ there exists $a_i \in F_i$ such that $e_i y = a_i x_i$. Let $r = \sum_{i \in \Lambda} a_i$. Then,
\begin{equation*}
y = \sum_{i \in \Lambda} a_i x_i = \Big(\sum_{i \in \Lambda} a_i\Big)\cdot \Big(\sum_{i \in \Lambda} x_i\Big) = rx.
\end{equation*}
Hence, $J = Rx$, and $J$ cannot be covered by subideals since no proper subideal of $J$ can contain $x$.

The final claim of the theorem follows from a variation of \cite[Thm.\ 2.2]{Werner}. That theorem is stated for rings with unity, but the proof also works in the current setting. For completeness, we include the adapted proof below.

Assume that $J$ is coverable. We use induction on $|\Lambda|$. There is nothing to prove if $|\Lambda|=1$, so assume that $|\Lambda|>1$ . The proof of (2) $\Rightarrow$ (1) above shows that $\sigma(R) \le \sigma(e_i J)$ whenever $e_i J$ is coverable, so $\sigma(J) \le \min_{i \in \Lambda_2}(|F_i|) + 1$. For the reverse inequality, assume without loss of generality that $d_1 \ge 2$ and that $m \colonequals \min_{i \in \Lambda_2}(|F_i|) + 1 = \sigma(e_1 J)$. Let $t < m$ and let $K_1, \ldots, K_t$ be maximal $R$-subideals of $J$. We will show that $\bigcup_{j=1}^t K_j \ne J$.

Let $J' = \bigoplus_{i \in \Lambda, i > 1} e_i J$. By induction, either $J'$ is not coverable, or $\sigma(J') \le m$. By Lemma \ref{Subideal decomposition}, for each $1 \le j \le m$ there exist subideals $L_j \subseteq e_1 J$ and $M_j \subseteq J'$ such that $K_j = L_j \oplus M_j$; we admit the possibility that some $L_j = e_1 J$, or some $M_j = J'$. Clearly, $\bigcup_{j=1}^t K_j \subseteq (\bigcup_{j=1}^t L_j) \oplus (\bigcup_{j=1}^t M_j)$, so there is nothing to prove if either $\bigcup_{j=1}^t L_j \ne e_i J$ or $\bigcup_{j=1}^t M_j \ne J'$. 

From here, assume that $\bigcup_{j=1}^t L_j = e_i J$ and $\bigcup_{j=1}^t M_j = J'$. Since $t < m$, the union of all the $L_j$ properly contained in $e_1 J$ cannot equal $e_1 J$; likewise, the union of the proper $M_j$ cannot equal $J'$. Choose $x \in e_1 J$ that is not in the union of the proper $L_j$, and choose $y \in J'$ similarly.

If $x+y \in K_\ell$ for some $1 \le \ell \le t$, then $x \in L_\ell$ and $y \in M_\ell$. Consequently, $L_\ell = e_1 J$ and $M_j = J'$; but then, $K_\ell = J$, a contradiction. Thus, $\bigcup_{j=1}^t K_j \ne J$, and therefore $\sigma(J) = m$.
\end{proof}

We now examine the relationship between coverings of $R$ and coverings of $J$.

\begin{thm}\label{R and J commutative thm}
Assume that $R$ is coverable.
\begin{enumerate}[(1)]
\item If $J$ is not coverable, then $\sigma(R) = \sigma(R/J)$.
\item If $J$ is coverable and $R$ is $\sigma$-elementary, then $\sigma(R)$ is equal to the number of maximal $R$-subideals of $J$.
\end{enumerate}
\end{thm}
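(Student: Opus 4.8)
The plan is to prove the two parts of Theorem \ref{R and J commutative thm} separately, using the structural decomposition $R = S \oplus J$ and the classification of maximal subrings from Theorem \ref{Max subring classification}. Throughout, I would exploit the fact that (by Lemma \ref{lem:basics}(4)) we may assume any minimal cover of $R$ consists of maximal subrings, and that each such maximal subring is either an inverse image of a maximal subring of $R/J$ (hence contains $J$) or has the form $S' \oplus I$ with $S' \in \msS(R)$ and $I$ a maximal $R$-subideal of $J$. Since $R$ is commutative, $\msS(R)$ is a singleton $\{S\}$, so the second type of maximal subring is exactly $S \oplus I$ for $I$ a maximal $R$-subideal of $J$.

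For part (1), I would argue as follows. The inequality $\sigma(R) \le \sigma(R/J)$ always holds by Lemma \ref{lem:basics}(3), so it suffices to show $\sigma(R) \ge \sigma(R/J)$. Take a minimal cover $\mcC$ of $R$ by maximal subrings. The hypothesis that $J$ is \emph{not} coverable means (via Theorem \ref{When J is coverable}) that $\Lambda_2 = \varnothing$, so $J$ is generated by a single element $x = \sum_{i \in \Lambda} x_i$; consequently no proper $R$-subideal of $J$ contains $x$. I would then look at the element $x \in R$: whichever $T \in \mcC$ contains $x$ cannot be of the second type $S \oplus I$, since that would force $x \in I$ with $I$ a proper subideal of $J$, contradicting that $x$ generates $J$. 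More strongly, every maximal subring containing $x$ must contain all of $J$ (the only maximal subrings not containing $J$ are the $S \oplus I$, and these miss $x$). The key step is to show we can replace every second-type subring in $\mcC$ by a first-type subring without increasing the cover size, thereby obtaining a cover by subrings all containing $J$; such a cover descends to a cover of $R/J$ of the same size, giving $\sigma(R/J) \le |\mcC| = \sigma(R)$.

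For part (2), assume $J$ is coverable and $R$ is $\sigma$-elementary. Let $N$ denote the number of maximal $R$-subideals of $J$. One direction is a direct construction: since $R$ is $\sigma$-elementary, in particular $\sigma(R) < \sigma(R/J)$, so lifting a cover from the quotient is strictly suboptimal, which signals that an efficient cover must use the second-type subrings $S \oplus I$. I would verify that $\{S \oplus I : I \text{ a maximal } R\text{-subideal of } J\}$ is in fact a cover of $R$: any element $r = s + \alpha$ with $s \in S$, $\alpha \in J$ lies in $S \oplus I$ whenever $\alpha \in I$, and since $J$ is coverable every $\alpha \in J$ lies in some maximal subideal. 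This yields $\sigma(R) \le N$. For the reverse inequality, I would take a minimal cover $\mcC$ by maximal subrings and use the $\sigma$-elementary hypothesis together with Lemma \ref{Sigma elementary lemma}: because $\sigma(R) < \sigma(R/J)$, the argument should show that no member of $\mcC$ can be a first-type subring (those containing $J$), for otherwise one could push the cover down to $R/J$ and contradict $\sigma(R) < \sigma(R/J)$. Hence every $T \in \mcC$ is of the form $S \oplus I_T$, and distinct covering subrings force distinct maximal subideals $I_T$, each of which must appear (since the $S \oplus I$ are the only subrings available and every maximal subideal is needed to capture the corresponding elements of $J$), giving $|\mcC| \ge N$.

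I expect the main obstacle to lie in the reverse inequality of part (2): rigorously arguing that a minimal cover under the $\sigma$-elementary hypothesis must consist \emph{entirely} of second-type subrings, and moreover must use \emph{every} maximal $R$-subideal of $J$. The subtlety is that a minimal cover could a priori mix first-type and second-type subrings, and one must rule this out by showing that any first-type subring is redundant or replaceable given that $\sigma(R) < \sigma(R/J)$; this is precisely where Lemma \ref{Sigma elementary lemma}, applied with the decomposition $R = S \oplus I$ for a maximal subideal $I$, becomes the essential tool, forcing each $S \oplus I$ into the cover and thereby pinning down the covering number as the exact count $N$.
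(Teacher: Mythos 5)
Your overall strategy matches the paper's, but both halves have gaps at exactly the points you yourself flag as delicate. In part (1), the ``replacement'' step is the problem: a second-type subring $S \oplus I$ is itself a \emph{maximal} subring, so it is not contained in any first-type subring, and there is no evident way to swap it for one while still covering the elements it was responsible for. What actually works --- and is the paper's argument --- is to push your observation about $x$ one step further: fix $x$ with $Sx = J$ and consider the entire coset $S + x$, not just the element $x$. If $a + x \in S \oplus I$ for some $a \in S$, then $x \in I$ (since $a \in S \subseteq S \oplus I$), whence $J = Sx \subseteq I$, a contradiction; so $S + x$ is covered entirely by the first-type members $A_1, \dots, A_m$ of the cover. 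Since each $A_j$ contains $J$, $a + x \in A_j$ forces $a \in A_j$, so $\{A_1/J, \dots, A_m/J\}$ covers $R/J \cong S$ and $\sigma(R/J) \le m \le \sigma(R)$. No replacement is needed.

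In part (2), your upper bound is fine, but the lower bound as written does not go through. Lemma \ref{Sigma elementary lemma} requires a decomposition $R = T \oplus I'$ with $I'$ a nonzero two-sided ideal and $\sigma(R) < \sigma(R/I')$; the expression ``$R = S \oplus I$'' for a maximal subideal $I$ of $J$ is not such a decomposition, since $S \oplus I$ is a proper subring of $R$. The missing ingredient is Lemma \ref{Subideal decomposition}(2): each maximal subideal $I$ of $J$ admits an ideal complement $\overline{I}$ with $J = I \oplus \overline{I}$, so that $R = (S \oplus I) \oplus \overline{I}$, and $\sigma(R) < \sigma(R/\overline{I})$ holds because $R$ is $\sigma$-elementary and $\overline{I} \ne \{0\}$; Lemma \ref{Sigma elementary lemma} then forces the maximal subring $S \oplus I$ into every minimal cover, and running over all $N$ maximal subideals gives $\sigma(R) \ge N$. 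Your alternative route --- arguing that no first-type subring can appear in the cover because ``one could push the cover down to $R/J$'' --- is not valid: the presence of a single subring containing $J$ does not allow you to contract the whole cover to $R/J$. That intermediate claim is also unnecessary once each $S \oplus I$ has been forced into the cover, and your assertion that ``every maximal subideal is needed to capture the corresponding elements of $J$'' is exactly what the complement argument supplies and what a purely set-theoretic count cannot (an element of $I$ may lie in several maximal subideals).
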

\begin{proof}
(1) Assume that $J$ is not coverable. Then, there exists $x \in J$ such that $Rx = J$. Since $J^2 = \{0\}$, this means that $Sx = J$.

Next, by Theorem \ref{Max subring classification}, any maximal subring of $R$ either contains $J$, or has the form $S \oplus I$ for some maximal subideal $I$ of $J$ (note that $\msS(R) = \{S\}$ because $R$ is commutative). Let $\mcC = \{T_1, \ldots, T_k\} \cup \{A_1, \ldots, A_m\}$ be a minimal cover of $R$, where $J \not\subseteq T_i$ for all $1 \le i \le k$, and $J \subseteq A_j$ for all $1 \le j \le m$.

Let $a \in S$. If $a + x \in T_i$ for some $i$, then $x \in T_i$ because $S \subseteq T_i$. But then, $Sx = J \subseteq T_i$, a contradiction. So, for all $a \in S$, $a + x \in \bigcup_{j=1}^m A_j$; hence, $S+x \subseteq \bigcup_{j=1}^m A_j$. This means that $\{A_1/J, \ldots, A_m/J\}$ forms a cover of $R/J \cong S$. Thus, $\sigma(R/J) \le m \le \sigma(R)$. Since we always have $\sigma(R) \le \sigma(R/J)$, we conclude that $\sigma(R) = \sigma(R/J)$.

(2) Assume that $J$ is coverable and that $R$ is $\sigma$-elementary. Let $\mcC$ be a minimal cover of $R$, and let $m$ be the number of maximal $R$-subideals of $J$. Let $\msT$ be the set of all maximal subrings of $R$ that do not contain $J$. By Theorem \ref{Max subring classification} and the uniqueness of $S$, if $T \in \msT$, then $T = S \oplus I$ for some maximal subideal $I$ of $J$; conversely, $S \oplus I \in \msT$ for any such $I$. Thus, $|\msT| = m$. 

Next, by Lemma \ref{Subideal decomposition}, for every $T  = S \oplus I \in \msT$,  $I$ has an ideal complement $\overline{I}$ in $J$. So, $R = T \oplus \overline{I}$, and by Lemma \ref{Sigma elementary lemma}, $T \in \mcC$. Thus, $\sigma(R) = |\mcC| \ge m$.

To complete the proof, it suffices to show that $\bigcup_{T \in \msT} T = R$. So, let $s + x \in R$, where $s \in S$ and $x \in J$. Since $J$ is coverable, $x$ is an element of some maximal subideal $I \subseteq J$, and $s + x \in S \oplus I \in \msT$. Thus, $\bigcup_{T \in \msT} T$ is equal to $R$, and $\sigma(R)=m$.
\end{proof}

All that remains is to count the number of maximal subideals of $J$.

\begin{thm}\label{Counting subideals of J}
Let $m$ be the number of maximal subideals of $J$. Then, $m=\sum_{i \in \Lambda} (|F_i|^{d_i}-1)/(|F_i|-1)$. If $J$ is coverable and $R$ is $\sigma$-elementary, then $\sigma(R) = m =  \min_{i \in \Lambda_2}(|F_i| + 1)$. Moreover, in this case, $\Lambda = \Lambda_2$, $|\Lambda_2|=1$, and $d_i=2$ for $i \in \Lambda_2$.
\end{thm}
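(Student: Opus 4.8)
The plan is to treat the two assertions separately: first the unconditional count of maximal subideals of $J$, and then the rigidity forced by the $\sigma$-elementary hypothesis.

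For the count I would invoke Lemma \ref{Subideal decomposition}(1) to reduce every subideal $I \subseteq J$ to its components $I = \bigoplus_i e_i I$, where each $e_i I$ is an $F_i$-subspace of the $d_i$-dimensional space $e_i J$. Here the $R$-action on $e_i J$ is really an $F_i$-action, since $e_j \cdot e_i J = \{0\}$ for $j \ne i$ and $J$ acts trivially because $J^2 = \{0\}$; hence the $R$-subideals of $e_i J$ are exactly its $F_i$-subspaces. Thus subideals of $J$ correspond to tuples $(V_i)_{i \in \Lambda}$ of $F_i$-subspaces $V_i \subseteq e_i J$, and $J/I \cong \bigoplus_i (e_i J / V_i)$. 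A proper subideal $I$ is maximal precisely when $J/I$ is a simple $R$-module, which (by the factored action) holds iff exactly one summand $e_i J / V_i$ is nonzero and is one-dimensional over $F_i$, i.e.\ $V_i$ is a hyperplane of $e_i J$ and $V_j = e_j J$ for $j \ne i$. Since the number of hyperplanes of a $d_i$-dimensional $F_i$-space is $(|F_i|^{d_i}-1)/(|F_i|-1)$, summing over $i \in \Lambda$ gives the claimed formula for $m$.

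For the second part I would record two facts. Because $J$ is coverable and $R$ is $\sigma$-elementary, Theorem \ref{R and J commutative thm}(2) gives $\sigma(R) = m$. On the other hand, lifting a minimal cover $J = \bigcup_j I_j$ to the subrings $S \oplus I_j$ covers $R$, so $\sigma(R) \le \sigma(J)$, and by Theorem \ref{When J is coverable}, $\sigma(J) = \min_{i \in \Lambda_2}|F_i| + 1$, where $\Lambda_2 \ne \varnothing$ since $J$ is coverable. Writing $q \colonequals \min_{i \in \Lambda_2}|F_i|$, attained at some index $k \in \Lambda_2$, these combine to $m \le q + 1$.

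The crux is then a squeezing argument on the formula for $m$. Each term $(|F_i|^{d_i}-1)/(|F_i|-1) = 1 + |F_i| + \cdots + |F_i|^{d_i-1}$ is at least $1$ for every $i \in \Lambda$, and the term at $i = k$ is at least $1 + q$ because $d_k \ge 2$. If $\Lambda$ contained any index other than $k$, then $m \ge (1+q) + 1 > q+1$, contradicting $m \le q+1$; hence $\Lambda = \{k\}$. Then $m = 1 + q + \cdots + q^{d_k - 1} \le q+1$ forces $d_k = 2$, so $m = q+1$. This simultaneously yields $\Lambda = \Lambda_2 = \{k\}$, $|\Lambda_2| = 1$, $d_k = 2$, and $\sigma(R) = m = q + 1 = \min_{i \in \Lambda_2}(|F_i|+1)$. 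I would flag this squeeze as the one genuinely substantive step: the count $m$ is a priori a large sum, and it is only the two-sided bound ($m = \sigma(R) \le \sigma(J) = q+1$ from above, and the term-by-term lower bound from below) that collapses the structure to a single field carrying a two-dimensional module. The remaining verifications—that the action on $e_i J$ is $F_i$-linear, that hyperplanes are counted correctly, and that lifting a cover of $J$ covers $R$—are routine.
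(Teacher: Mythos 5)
Your proposal is correct and follows essentially the same route as the paper: decompose subideals via Lemma \ref{Subideal decomposition}, count hyperplanes componentwise, obtain $\sigma(R)=m$ from Theorem \ref{R and J commutative thm}(2) and $m=\sigma(R)\le\sigma(J)=\min_{i\in\Lambda_2}(|F_i|+1)$ from Theorem \ref{When J is coverable}, and then squeeze against the term-by-term lower bound on the sum. The paper states this final comparison more tersely, but the logic is identical.
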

\begin{proof}
Let $I$ be a maximal subideal of $J$. By Lemma \ref{Subideal decomposition}, $I$ decomposes as $I = \bigoplus_{i \in \Lambda} e_i I$, and each $e_i I$ is an ideal of $R$. Because of this, $I$ is maximal if and only if $e_k I$ is maximal in $e_k J$ for a single $k \in \Lambda$ and $e_j I = e_j J$ for all $j \ne k$. Next, since each $e_i J$ is an $F_i$-vector space of dimension $d_i$, the maximal subideals in $e_i J$ are either the zero space (if $d_i=1$), or are the subspaces of codimension 1 (if $d_i \ge 2$). In either case, the number of maximal subspaces is equal to $(|F_i|^{d_i}-1)/(|F_i|-1)$. It follows that $m=\sum_{i \in \Lambda} (|F_i|^{d_i}-1)/(|F_i|-1)$.

Now, assume that $J$ is coverable and $R$ is $\sigma$-elementary. By Theorem \ref{When J is coverable}, $|\Lambda_2|\ge 1$. So, on the one hand, 
\begin{equation*}
\min_{i \in \Lambda_2}(|F_i| + 1) \le \sum_{i \in \Lambda} \dfrac{|F_i|^{d_i}-1}{|F_i|-1} = m.
\end{equation*}
On the other hand, by Theorem \ref{R and J commutative thm}, $m = \sigma(R)$. Moreover, $\sigma(R) \le \sigma(J)$, and $\sigma(J) = \min_{i \in \Lambda_2}(|F_i| + 1)$. Thus, $\sigma(R) = m = \min_{i \in \Lambda_2}(|F_i| + 1)$. Finally, in order for $\min_{i \in \Lambda_2}(|F_i| + 1)$ to equal $\sum_{i \in \Lambda} (|F_i|^{d_i}-1)/(|F_i|-1)$, we must have $\Lambda = \Lambda_2$, $|\Lambda_2|=1$, and---for the unique $i \in \Lambda_2$---$|F_i| + 1 = (|F_i|^{d_i}-1)/(|F_i|-1)$, which means that $d_i=2$.
\end{proof}

At this point, we have everything we need to classify commutative $\sigma$-elementary rings. The classification is divided into two cases, depending on whether or not $J = \{0\}$. When $J = \{0\}$, $R$ is a direct product of finite fields, and the classification can be determined by several results from \cite{Werner}, which we summarize here.

\begin{thm}\label{thm:Werner1}\mbox{}
\begin{enumerate}[(1)]
\item \cite[Thm.\ 5.1]{Werner} Let $R = \prod_{i=1}^t ( \prod_{j=1}^{t_i} F_i)$, where each $F_i$ is a distinct finite field. Then, $R$ is coverable if and only if at least one $\prod_{j=1}^{t_i} F_i$ is coverable. If $R$ is coverable, then $\sigma(R) = \min_{i} \sigma(\prod_{j=1}^{t_i} F_i)$.

\item \cite[Thm.\ 3.5, 5.4]{Werner} For each prime $p$ and each $n \ge 1$, let $\psi(p,n)$ be the number of monic irreducible polynomials in $\F_p[x]$ of degree $n$. For a prime power $q=p^n$, define
\begin{equation*}
\tau(q) = \begin{cases} p & n=1 \\ \psi(p,n)+1 & n > 1 \end{cases}
\end{equation*}
Then, $\prod_{i=1}^t \F_q$ is coverable if and only if $t \ge \tau(q)$. Moreover, if $t \ge \tau(q)$, then $\sigma(\prod_{i=1}^t \F_q) = \sigma(\prod_{i=1}^{\tau(q)} \F_q)$.
\end{enumerate}
\end{thm}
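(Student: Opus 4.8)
The plan is to reduce both parts to the single principle of Lemma \ref{lem:basics}(1): a finite product of fields is coverable precisely when it cannot be generated by one element, and to analyze single-element generation through the Chinese Remainder Theorem. Since these rings are semisimple (that is, $J = \{0\}$), Theorem \ref{Max subring classification} tells us that every maximal subring either contains $1$ or is a maximal ideal with residue field $\F_p$, and this pins down the subrings available for covering. I would prove part (2) first, since it is the engine for part (1).

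For part (2), write $q = p^n$. For $x = (x_1, \dots, x_t) \in \F_q^t$ the subring generated by $x$ (not assumed to contain $1$) equals $x\,\F_p[x]$, so it is all of $\F_q^t$ exactly when $\F_p[x] = \F_q^t$ \emph{and} $x$ is a unit. By the Chinese Remainder Theorem, $\F_p[x] = \F_q^t$ holds iff the $x_i$ are pairwise non-conjugate elements that each generate $\F_q$ over $\F_p$, while the unit condition forces every $x_i \ne 0$. The degree-$n$ generators of $\F_q$ are the roots of the $\psi(p,n)$ monic irreducibles of degree $n$, falling into $\psi(p,n)$ Galois orbits; for $n > 1$ these are automatically nonzero, giving the bound $t \le \psi(p,n)$, whereas for $n = 1$ the admissible values are the $p-1$ nonzero elements of $\F_p$, giving $t \le p-1$. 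In both cases a single generating $x$ exists iff $t < \tau(q)$, so $\prod_{i=1}^t \F_q$ is coverable iff $t \ge \tau(q)$. (It is precisely the nonunital convention, forcing $x_i\ne0$, that yields $\tau(p)=p$ rather than $p+1$.)

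For the covering number in part (2), the inequality $\sigma(\prod_{i=1}^t \F_q) \le \sigma(\prod_{i=1}^{\tau(q)} \F_q)$ is immediate from Lemma \ref{lem:basics}(3), since $\prod^{\tau(q)} \F_q$ is a quotient of $\prod^t \F_q$. The reverse inequality is the crux. I would embed $\prod^{\tau(q)} \F_q$ into $\prod^t \F_q$ as the ``repeated-coordinate'' subring $D$ (pull back a surjection of index sets; $D$ is a unital copy of $\prod^{\tau(q)} \F_q$) and restrict a minimal cover $\mcC$. In the style of Lemma \ref{Sigma elementary lemma}, if no member of $\mcC$ contains $D$, then $\{T \cap D : T \in \mcC\}$ is a proper cover of $D$ and we are done. \emph{I expect the main obstacle to be the case where some cover member swallows a chosen copy of $\prod^{\tau(q)} \F_q$.} I would resolve it via the maximal-subring classification: once $t \ge \tau(q)$, the pigeonhole principle forces every element with all-generator coordinates to contain a conjugate pair, so the essential covering is carried out by the merge subrings $\{x_i = \theta(x_j)\}$ (with $\theta \in \mathrm{Gal}(\F_q/\F_p)$) and the subfield subrings $\{x_i \in K\}$, whose combinatorics already saturate at $t = \tau(q)$; a counting argument on which generators each such subring can absorb then yields $\sigma(\prod^t \F_q) \ge \sigma(\prod^{\tau(q)} \F_q)$.

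Finally, part (1) reduces to part (2). Grouping the distinct fields $F_i$ by characteristic and invoking the coprime-product formula \cite[Cor.\ 2.4]{Werner} lets me assume all $F_i$ share one characteristic $p$, so they have pairwise distinct degrees over $\F_p$. A single-element analysis as above then shows $R = \prod_i F_i^{t_i}$ is generated by one element iff each block $F_i^{t_i}$ is (distinct degrees preclude any cross-block conjugacy), giving coverability iff some block is coverable. The upper bound $\sigma(R) \le \min_i \sigma(F_i^{t_i})$ again follows from Lemma \ref{lem:basics}(3) by projecting onto a coverable block. The key structural input for the matching lower bound is that a field has no proper quotient, so there is no cross-block merge subring; hence every maximal subring is \emph{block-local}, and an element whose non-$k$ coordinates are frozen at generators can only be caught by subrings modifying block $k$, forcing $\sigma(R) \ge \sigma(F_k^{t_k})$. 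The delicate point I would expect to fight with here is that a block which is itself coverable admits no single generator to freeze at, so the freezing argument must be replaced by an induction on the number of blocks (or by choosing $k$ of minimal covering number and freezing the remaining blocks at elements generating a maximal subring), which is where the bookkeeping is heaviest.
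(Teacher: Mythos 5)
This theorem is not proved in the paper at all: it is imported verbatim from \cite[Thm.\ 5.1, 3.5, 5.4]{Werner}, so there is no internal proof to match your argument against. Judged on its own terms, your single-generator analysis is sound where it is carried out in full: the identification of the nonunital subring generated by $x$ with $x\F_p[x]$, the Chinese-Remainder criterion (each coordinate a degree-$n$ generator, pairwise non-conjugate, and nonzero when $n=1$), and the resulting coverability thresholds $t\ge\tau(q)$ all check out, as does the reduction of part (1) to block-local maximal subrings via the absence of isomorphisms between distinct simple quotients.

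The genuine gap is the lower bound $\sigma(\prod_{i=1}^t\F_q)\ge\sigma(\prod_{i=1}^{\tau(q)}\F_q)$ for $t>\tau(q)$, which is the real content of \cite[Thm.\ 5.4]{Werner}. Your primary strategy --- restrict a minimal cover $\mcC$ to a repeated-coordinate copy $D$ of $\prod^{\tau(q)}\F_q$ --- fails not in an edge case but generically: $D$ determined by a surjection $f\colon\{1,\dots,t\}\to\{1,\dots,\tau(q)\}$ lies inside every identity-merge subring $\{x_i=x_j\}$ with $f(i)=f(j)$, and there are minimal covers built entirely from such subrings. Concretely, for $q=p$ and $t\ge p+1$, the $\binom{p+1}{2}$ subrings $\{x\,:\,x_i=x_j\}$ over all pairs from a fixed set of $p+1$ coordinates cover $\F_p^t$ by pigeonhole, and $\binom{p+1}{2}=p+\binom{p}{2}=\sigma(\F_p^p)$, so this is a minimal cover; yet any surjection onto $p$ classes must identify two of those $p+1$ coordinates, so \emph{every} choice of $D$ is swallowed by some member of this cover. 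Thus the case you flag as ``the main obstacle'' cannot be avoided by a better choice of $D$, and the proposed resolution (``a counting argument on which generators each such subring can absorb'') is precisely the missing proof rather than a sketch of one: one must classify the maximal subrings of $\F_q^t$ and count, for each type, how many tuples with pairwise non-conjugate generator coordinates it can contain. Until that count is supplied, part (2)'s equality of covering numbers, and hence the identification of $\prod^{\tau(q)}\F_q$ as the $\sigma$-elementary model used in Theorem \ref{thm:comm_sigma}(1), is not established.
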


\begin{thm}\label{thm:comm_sigma}
Let $R$ be a commutative, $\sigma$-elementary ring with unity for which $\sigma(R)$ is finite, and let $J = \msJ(R)$.
\begin{enumerate}[(1)]
\item If $J = \{0\}$, then $R \cong \prod_{i=1}^{\tau(q)} \F_q$ for some finite field $\F_q$.

\item If $J \ne \{0\}$, then for some finite field $\F_q$, $R$ is isomorphic to the following ring of $3 \times 3$ matrices:
\begin{equation*}
\left\{ \begin{pmatrix} a & b & c \\ 0 & a & 0 \\ 0 & 0 & a \end{pmatrix} : a, b, c \in \F_q \right\}.
\end{equation*}
\end{enumerate}
\end{thm}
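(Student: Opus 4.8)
The plan is to use $\sigma$-elementarity to force $R$ into the standing setting of Section~\ref{Commutative section}, and then to extract the structure directly from the counting theorems proved there. First I would observe that the hypotheses already pin down the characteristic and the radical: by Theorem~\ref{full reduction thm} there is an ideal $I$ with $R/I$ finite of characteristic $p$, $\msJ(R/I)^2=\{0\}$, and $\sigma(R/I)=\sigma(R)$. Since $R$ is $\sigma$-elementary, $\sigma(R)<\sigma(R/I)$ whenever $I\ne\{0\}$, so this forces $I=\{0\}$; hence $R$ is itself finite commutative of characteristic $p$ with $J^2=\{0\}$, and $R=S\oplus J$ with $S=F_1\oplus\cdots\oplus F_n$ as in Section~\ref{Commutative section}.

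For part~(1), with $J=\{0\}$ the ring $R$ is semisimple, hence a direct product of finite fields; grouping the factors by isomorphism type writes $R\cong\prod_{i=1}^t(\prod_{j=1}^{t_i}F_i)$ with the $F_i$ distinct. Theorem~\ref{thm:Werner1}(1) gives $\sigma(R)=\min_i\sigma(\prod_{j=1}^{t_i}F_i)$, attained at some index $i_0$. If more than one isomorphism type occurred, quotienting out the ideal $\prod_{i\ne i_0}(\prod_j F_i)$ would yield a proper residue ring with covering number still equal to $\sigma(R)$, contradicting $\sigma$-elementarity; so $R\cong\prod_{j=1}^{t_1}\F_q$ for a single field $\F_q$. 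Finally, Theorem~\ref{thm:Werner1}(2) shows coverability forces $t_1\ge\tau(q)$, and if $t_1>\tau(q)$ then deleting one coordinate (quotienting by a single copy of $\F_q$) again preserves the covering number; thus $t_1=\tau(q)$, as claimed.

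For part~(2), with $J\ne\{0\}$ I would first note that $J$ must be coverable: otherwise Theorem~\ref{R and J commutative thm}(1) gives $\sigma(R)=\sigma(R/J)$ with $J\ne\{0\}$, contradicting $\sigma$-elementarity. Feeding ``$J$ coverable and $R$ $\sigma$-elementary'' into Theorem~\ref{Counting subideals of J} yields the decisive structural facts: $\Lambda=\Lambda_2=\{i_0\}$ is a single index, $d_{i_0}=2$, and $\sigma(R)=|F_{i_0}|+1$. Thus $J=e_{i_0}J$ is a $2$-dimensional $F_{i_0}$-vector space supported on a single idempotent, and the idempotent decomposition $R=\bigoplus_i e_iR$ gives a ring direct product $R\cong\big(\prod_{i\ne i_0}F_i\big)\times R_0$, where $R_0=e_{i_0}R=F_{i_0}\oplus J$ is the square-zero extension of $F_{i_0}$ by $J$.

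The main obstacle---and the only genuinely new step---is ruling out the superfluous field factors, i.e.\ showing $n=1$. Here I would argue by a covering-number sandwich. Suppose $n\ge 2$, and set $I=\bigoplus_{i\ne i_0}F_i\ne\{0\}$, so that $R/I\cong R_0$. Lifting a minimal cover of $J$ to $R_0$ by the subrings $F_{i_0}\oplus I_j$ (the second covering method of Section~\ref{Commutative section}) gives $\sigma(R_0)\le\sigma(J)$, and Theorem~\ref{When J is coverable} applied to $R_0$ gives $\sigma(J)=|F_{i_0}|+1$. Combining this with $\sigma(R)\le\sigma(R/I)$ yields
\begin{equation*}
\sigma(R)=|F_{i_0}|+1\le\sigma(R/I)=\sigma(R_0)\le\sigma(J)=|F_{i_0}|+1,
\end{equation*}
so $\sigma(R/I)=\sigma(R)$, contradicting $\sigma$-elementarity. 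Hence $n=1$ and $R=R_0$. It then remains to identify $R_0$ with the displayed matrix ring: setting $q=|F_{i_0}|$ and choosing an $F_{i_0}$-basis $\{x_1,x_2\}$ of $J$, the assignment $a\mapsto aI$, $x_1\mapsto E_{12}$, $x_2\mapsto E_{13}$ is a ring isomorphism, since both sides are the square-zero extension of $\F_q$ by its standard $2$-dimensional module. This completes the plan.
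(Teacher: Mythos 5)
Your proposal is correct and follows essentially the same route as the paper: reduce via Theorem \ref{full reduction thm} to a finite ring of characteristic $p$ with $J^2=\{0\}$, handle $J=\{0\}$ with Theorem \ref{thm:Werner1} and $\sigma$-elementarity, and for $J\ne\{0\}$ invoke Theorems \ref{R and J commutative thm} and \ref{Counting subideals of J} to get $\Lambda=\Lambda_2=\{i_0\}$ and $d_{i_0}=2$, then eliminate the annihilating field factors $S_0$ by a $\sigma$-elementarity contradiction. The only (immaterial) difference is in that last step: you bound $\sigma(R/S_0)\le\sigma(J)=|F_{i_0}|+1$ by lifting a cover of $J$ by subideals, whereas the paper gets $\sigma(\F_q\oplus J)\le\sigma(R)$ by contracting a minimal cover of $R$; both close the same sandwich.
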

\begin{proof}
By Theorem \ref{full reduction thm}, we may assume that $R$ is finite, has characteristic $p$, and that $J^2=\{0\}$. 

(1) Assume first that $J = \{0\}$. Then, $R$ semisimple, and since $R$ is commutative, we have $R = \prod_{i=1}^t ( \prod_{j=1}^{t_i} F_i)$, where each $F_i$ is a distinct finite field of characteristic $p$. By Theorem \ref{thm:Werner1}, there is a field $\F_q$ for which $\sigma(R) = \sigma(\prod_{i=1}^{\tau(q)} \F_q)$. Let $R' = \prod_{i=1}^{\tau(q)} \F_q$. Then, $R'$ is $\sigma$-elementary, and occurs as a residue ring of $R$. Hence, we must have $R = R'$.

(2) Assume that $J \ne \{0\}$ and let $S$ be the semisimple subalgebra of $R$ such that $S \cong R/J$. As in Definition \ref{def:Lambda}, let $S = F_1 \oplus \cdots \oplus F_n$ for some finite fields $F_i$. Let $\Lambda_0 = \{1 \le i \le n : F_i \cdot J = \{0\}\}$, and let $S_0 = \bigoplus_{i \in \Lambda_0} F_i$. Then,
\begin{equation*}
S = S_0 \oplus \Big( \bigoplus_{i \in \Lambda} F_i \Big),
\end{equation*}
where $\Lambda$ is defined as in Definition \ref{def:Lambda}.  Since $R$ is $\sigma$-elementary, we have $\sigma(R) < \sigma(R/J)$. So, $J$ is coverable by Theorem \ref{R and J commutative thm}, and by Theorem \ref{Counting subideals of J}, $\Lambda = \Lambda_2$, $|\Lambda_2|=1$, and $J$ is 2-dimensional. This means that there is a unique field $\F_q$ such that $S = S_0 \oplus \F_q$ and $\dim_{\F_q} J = 2$ 

It suffices to show that $S_0=\{0\}$, since then $R = \F_q \oplus J$ with $\dim_{\F_q} J = 2$ and $J^2=\{0\}$, and these properties characterize the matrix ring given in the statement of the theorem. 

Since $S_0 \cdot J = \{0\}$, $S_0$ is an ideal of $R$, and $R/S_0 \cong \F_q \oplus J$. So, $\sigma(R) \le \sigma(\F_q \oplus J)$. But, by Theorem \ref{R and J commutative thm}, a minimal cover of $R$ consists of subrings of the form $S \oplus I$, where $I$ is a maximal subideal of $J$. Contracting each of these subrings to $\F_q \oplus J$ forms a cover of that ring, so $\sigma(\F_q \oplus J) \le \sigma(R)$. Because $R$ is $\sigma$-elementary, this forces $S_0 = \{0\}$ and $R = \F_q \oplus J$, as required.
\end{proof}

\begin{cor}\label{Full commutative case}
Let $R$ be a commutative ring with unity such that $\sigma(R)$ is finite. Then, either $\sigma(R) = \sigma(R/\msJ(R))$, or $\sigma(R) = p^d+1$ for some positive integer $d$.
\end{cor}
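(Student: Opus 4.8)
The plan is to reduce to a $\sigma$-elementary quotient and then read off the two alternatives directly from the classification in Theorem \ref{thm:comm_sigma}. Since $\sigma(R)$ is finite, $R$ is coverable, and so (as noted after the definition of $\sigma$-elementary) there is a two-sided ideal $I$ of $R$ with $R/I$ being $\sigma$-elementary and $\sigma(R/I)=\sigma(R)$. The ring $R/I$ is commutative and has finite covering number, so Theorem \ref{thm:comm_sigma} applies and splits into exactly two cases according to whether $\msJ(R/I)=\{0\}$ or not. I will show that the case $\msJ(R/I)\ne\{0\}$ forces $\sigma(R)=p^d+1$, while the case $\msJ(R/I)=\{0\}$ forces $\sigma(R)=\sigma(R/\msJ(R))$.

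In the case $\msJ(R/I)\ne\{0\}$, Theorem \ref{thm:comm_sigma}(2) gives $R/I\cong \F_q\oplus\msJ(R/I)$ with $\dim_{\F_q}\msJ(R/I)=2$. Then Theorem \ref{Counting subideals of J} evaluates the covering number as $\sigma(R/I)=|\F_q|+1=q+1$. Writing $q=p^d$, where $p$ is the characteristic of the field and $d\ge 1$, we conclude $\sigma(R)=\sigma(R/I)=p^d+1$, which is the second alternative. In the case $\msJ(R/I)=\{0\}$, the quotient $R/I$ is semisimple. The key observation is that the image of $\msJ(R)$ under the projection $R\to R/I$ is contained in $\msJ(R/I)=\{0\}$, because the Jacobson radical maps into the radical of any quotient (for these finite commutative rings this is just the fact that nilpotents map to nilpotents); hence $\msJ(R)\subseteq I$. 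Therefore $R/I$ is itself a quotient of $R/\msJ(R)$, and Lemma \ref{lem:basics}(3) gives $\sigma(R/\msJ(R))\le \sigma(R/I)=\sigma(R)$. Combining this with the always-valid inequality $\sigma(R)\le\sigma(R/\msJ(R))$ yields $\sigma(R)=\sigma(R/\msJ(R))$, the first alternative.

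The one point requiring care, and the place I expect the only real friction, is the interpretation of the statement when $R/\msJ(R)$ fails to be coverable (so that $\sigma(R/\msJ(R))$ is not a finite number). The case analysis above resolves this automatically: if $\msJ(R/I)=\{0\}$, then $R/\msJ(R)$ surjects onto the coverable ring $R/I$ and so is itself coverable, which means the equality $\sigma(R)=\sigma(R/\msJ(R))$ is asserted only when it is meaningful; and if $R/\msJ(R)$ is not coverable, we are necessarily in the case $\msJ(R/I)\ne\{0\}$, where the conclusion $\sigma(R)=p^d+1$ holds regardless. With both cases dispatched, the dichotomy follows.
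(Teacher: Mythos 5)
Your proof is correct and takes essentially the same route as the paper's: pass to a $\sigma$-elementary residue ring $R/I$ and read off the dichotomy from Theorem \ref{thm:comm_sigma} together with Theorem \ref{Counting subideals of J}. The only difference is that you explicitly justify the semisimple case (via $\msJ(R)\subseteq I$, so that $R/I$ is a quotient of $R/\msJ(R)$ and the two inequalities force $\sigma(R)=\sigma(R/\msJ(R))$), a step the paper leaves implicit.
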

\begin{proof}
Since $R$ is commutative and coverable, it has a $\sigma$-elementary residue ring $\olR$ such that $\sigma(R) = \sigma(\olR)$. Applying Theorem \ref{thm:comm_sigma} to $\olR$ (and Theorem \ref{Counting subideals of J} to $\olR$, if $\olR$ has nontrivial Jacobson radical) yields the desired result.
\end{proof}
%%%%%%%%%%%%%%%%%%%%%%%%%%%%%%%%%%%%%%%%%%%%%%%%%%%%%%%%%%
%%%%%%%%%%%%%%%%%%%%%%%%%%%%%%%%%%%%%%%%%%%%%%%%%%%%%%%%%%
%%%%%%%%%%%%%%%%%%%%%%%%%%%%%%%%%%%%%%%%%%%%%%%%%%%%%%%%%%
\section{\texorpdfstring{Finite Rings with $R/\msJ(R)$ Commutative}{Finite Rings with R/J(R) Commutative}}

In the previous section, we proved that if $R$ is finite, commutative, coverable ring, then either $\sigma(R)=\sigma(R/\msJ(R))$, or $\sigma(R)=p^d+1$ for some prime power $p^d$. It turns out that this characterization of $\sigma(R)$ holds with the weaker assumption that $R/\msJ(R)$ is commutative. Thus, it can be applied to rings such as upper triangular matrix rings, or certain group algebras such as $\F_2[D_8]$ (which has $R/\msJ(R) \cong \F_2$) or $\F_2[A_4]$ (which has $R/\msJ(R) \cong \F_2 \times \F_4$). Proving the characterization, however, is more complicated because $R$ may be noncommutative in these case.

We will maintain some notation and assumptions from prior sections. Throughout this section, $R$ is a finite ring with unity in characteristic $p$, $J = \msJ(R)$, and $J^2 =\{0\}$. Recall that
\begin{equation*}
\msS(R) = \{ S \subseteq R : S \text{ is a subalgebra of } R, S \cong R/\msJ(R), \text{ and } R = S \oplus \msJ(R)\}.
\end{equation*}
Since $R$ need not be commutative, there need not be a unique $S \in \msS(R)$ such that $R = S \oplus J$. As we shall see, our arguments are ultimately divided into two cases, depending on whether or not there exists a maximal subring $T \subseteq R$ such that $\bigcup_{S \in \msS(R)} S \subseteq T$. Both cases may occur in practice. For instance, such a $T$ exists for $\F_2[D_8]$, but not for $\F_2[A_4]$ or an upper triangular matrix ring.

First, we establish a number of properties of $J$ and the elements of $\msS(R)$. Most of these are consequences of our assumption that $J^2 = \{0\}$, and do not require $S$ to be commutative. Later, we will specialize to the case where $S$ is commutative and relate things back to covers of $R$.

The first lemma collects the basic arithmetic properties of $J$ and $1 + J$. These will be used freely in the remainder of this section. In what follows, when $r \in R$ and $u \in R^\times$, we let $r^u \colonequals u^{-1} r u$.

\begin{lem}\label{lem:1+Jarithmetic}
 Let $x, y \in J$ and $n \in \N$.  
 \begin{itemize}
 \item[(1)] $xy = 0$.
 \item[(2)] $(1 + x)(1 + y) = 1 + x + y$.
 \item[(3)] $(1 + x)^n = 1 + nx$, $(1 + x)^{-1} = 1 - x$, and the order of $(1 + x)$ is $p$.
 \item[(4)] $x \cdot (1 + y) = x$.
 \item[(5)] $1 + J$ is an elementary abelian $p$-group.
\end{itemize}
\end{lem}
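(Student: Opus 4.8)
The statement to prove is Lemma \ref{lem:1+Jarithmetic}, which collects five arithmetic facts about $J = \msJ(R)$ and $1 + J$ under the standing assumption $J^2 = \{0\}$. Let me sketch the proof.

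\medskip

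The plan is to derive everything from the single hypothesis $J^2 = \{0\}$, proving the five items essentially in order since each later item leans on the earlier ones. First I would prove (1): for $x, y \in J$, the product $xy$ lies in $J \cdot J = J^2 = \{0\}$, so $xy = 0$. This is immediate and is the engine for everything that follows. Next, for (2), I would simply expand $(1+x)(1+y) = 1 + x + y + xy$ and apply (1) to kill the $xy$ term, leaving $1 + x + y$. This shows that multiplication in $1+J$ is just addition of the $J$-parts.

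\medskip

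For (3), I would prove $(1+x)^n = 1 + nx$ by induction on $n$, using (2) at each step: $(1+x)^{n} = (1+x)^{n-1}(1+x) = (1 + (n-1)x)(1+x) = 1 + (n-1)x + x = 1 + nx$, where the middle equality again invokes (2) (or directly (1), since $(n-1)x \in J$). The inverse formula $(1+x)^{-1} = 1 - x$ follows from (2) with $y = -x$, giving $(1+x)(1-x) = 1 + x - x = 1$. For the order of $1 + x$: since $R$ has characteristic $p$, we have $px = 0$, so $(1+x)^p = 1 + px = 1$; and provided $x \ne 0$ the element $1 + x \ne 1$, so the order is exactly $p$ (and trivially the order of $1 = 1 + 0$ divides $p$). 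Item (4) is another direct application of (1): $x(1+y) = x + xy = x + 0 = x$.

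\medskip

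Finally, (5) is a packaging step. The set $1 + J$ is closed under the multiplication of $R$ by (2), contains $1 = 1 + 0$, and contains inverses by (3), so it is a subgroup of $R^\times$. It is abelian because by (2) both $(1+x)(1+y)$ and $(1+y)(1+x)$ equal $1 + x + y$. Every nonidentity element has order $p$ by (3), so $1 + J$ is an elementary abelian $p$-group. I do not anticipate any genuine obstacle here: the whole lemma is a sequence of short computations, all powered by the nilpotency relation $J^2 = \{0\}$ together with the characteristic-$p$ hypothesis. The only point requiring a word of care is that none of these manipulations presume commutativity of $R$ itself—the vanishing of $xy$ for $x, y \in J$ makes the order of factors irrelevant in every product that arises—so the lemma holds in the noncommutative setting that the rest of the section requires.
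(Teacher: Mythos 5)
Your proof is correct and follows essentially the same route as the paper's: everything reduces to $xy=0$ from $J^2=\{0\}$, with the product formula $(1+x)(1+y)=1+x+y$ driving the power, inverse, and commutativity claims. The only cosmetic difference is that you verify the inverse directly via $(1+x)(1-x)=1$ while the paper writes $(1+x)^{-1}=(1+x)^{p-1}=1-x$; both are immediate from the same identities.
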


\begin{proof}
 First, $xy = 0$ since $J^2 = \{0\}$.  Furthermore, $(1 + x)(1 + y) = 1 + x + y$ since $xy = 0$ by (1), proving (2).  Now, for all $n \in \N$, $(1 + x)^n = 1 + nx$, so $(1 + x)^{-1} = (1+x)^{p-1} = 1 - x$, proving (3).  Next,
\begin{equation*}
x \cdot (1 + y) = x + xy = x,
\end{equation*}
proving (4). Lastly, since $(1 + x)(1 + y) = 1 + x + y = 1 + y + x$, the group $1 + J$ is abelian, and this along with (3) proves (5).
\end{proof}

\begin{lem}\label{lem:conj1+J}
Let $S \in \msS(R)$ and $s \in S$.  If $x \in J$, then 
\begin{equation*}
s^{1 + x} = s + (sx - xs).
\end{equation*}
In particular, $s^{1 + J} \subseteq s + J$.
\end{lem}

\begin{proof}
Let $s \in S$ and $x \in J$.  Then,
\begin{equation*}
s^{1 + x} = (1 + x)^{-1}\cdot s \cdot (1 + x) = (1 - x)(s + sx) = s + (sx - xs),
\end{equation*}
since $J^2 = \{0\}$.  Moreover, since $J$ is a two-sided ideal, $s^{1 + x} \in s + J$ for all $x \in J$. 
\end{proof}

Recall by Theorem \ref{Max subring classification} that if $S \in \msS(R)$ and $T$ is a maximal subring containing $S$, then $T = S \oplus I$, where $I = \msJ(T) = T \cap J$ is maximal among the subideals of $R$ contained in $J$.

%4.3 \ref{lem:maxSS'} --> \ref{S and J lem} (1)
%4.4 \ref{lem:conjofT} --> \ref{S and J lem} (2)
%4.5 \ref{lem:1+sj} --> \ref{S and J lem} (3)

\begin{lem}\label{S and J lem}\mbox{}
\begin{enumerate}[(1)]
\item Let $S, S' \in \msS(R)$.  If there is a maximal subring $T = S \oplus I$ such that $S, S' \subseteq T$, then $S' = S^{1 + x}$ for some $x \in I$.  

\item Let $S \in \msS(R)$ and $T = S \oplus I$ be a maximal subring containing $S$.  Then every conjugate of $T$ in $R$ is of the form $T^{1+x}$, where $x \in J$, and $T^{1 + x} = S^{1 + x} \oplus I$. Consequently, there are at most $|J:I|$ conjugates of $T$ in $R$.

\item Assume $x \in J$ and $S^{1 + x} = S$.  Then, $x \in Z(R)$. Moreover, if $s \in Z(S)$, then $S^{1 + sx} = S$ for all $s \in S$.
\end{enumerate}
\end{lem}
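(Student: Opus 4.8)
The three parts are largely independent, and each reduces to the conjugation formula of Lemma \ref{lem:conj1+J} together with the fact that $I$ is a two-sided ideal of $R$ (by Theorem \ref{Max subring classification}). For part (1), the plan is to invoke the uniqueness clause of the Wedderburn--Malcev Theorem (Theorem \ref{Wedderburn thm}) applied to the ring $T$. First I would verify that both $S$ and $S'$ lie in $\msS(T)$: each is a subalgebra of $T$ isomorphic to $R/J \cong T/\msJ(T)$, each meets $J$ (hence $I = \msJ(T)$) trivially since it is semisimple, and a dimension count gives $T = S \oplus I = S' \oplus I$. Thus $S$ and $S'$ are two Wedderburn complements of $\msJ(T) = I$ in $T$, so by uniqueness they are conjugate by an element of $1 + \msJ(T) = 1 + I$; writing this element as $1 + x$ with $x \in I$ yields $S' = S^{1+x}$.

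For part (2), I would first decompose an arbitrary unit. Writing $u = s + y$ with $s \in S^\times$ and $y \in J$, we have $u = s(1 + s^{-1}y)$ with $1 + s^{-1}y \in 1 + J$; since $s, s^{-1} \in S \subseteq T$, conjugation by $s$ satisfies $T^s = T$, so $T^u = T^{1 + s^{-1}y}$ is of the claimed form $T^{1+x}$ with $x \in J$. Because $I$ is a two-sided ideal we have $I^{1+x} = I$, and since conjugation is an algebra automorphism, $T^{1+x} = S^{1+x} \oplus I$, the sum remaining direct because $S^{1+x}$, being a conjugate of $S$, lies in $\msS(R)$ and hence meets $J$ trivially. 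For the count, the key point is that $T^{1+x}$ depends only on the coset $x + I \in J/I$. Given $i \in I$, the identity $(1+x)(1+i) = 1 + x + i$ from Lemma \ref{lem:1+Jarithmetic} gives $T^{1+(x+i)} = (T^{1+x})^{1+i}$; then Lemma \ref{lem:conj1+J} shows that conjugating any element of $S^{1+x}$ by $1+i$ alters it by an element of $I$ (using that $I$ is two-sided), so $(T^{1+x})^{1+i} \subseteq S^{1+x} + I = T^{1+x}$, with equality by cardinality. Hence there are at most $|J:I|$ conjugates.

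For part (3), I would apply Lemma \ref{lem:conj1+J} directly. From $S^{1+x} = S$ and $s^{1+x} = s + (sx - xs)$, the commutator $sx - xs$ lies in $S \cap J = \{0\}$ for every $s \in S$, so $x$ commutes with all of $S$; writing a general $r \in R$ as $s + y$ and using $yx = xy = 0$ (as $J^2 = \{0\}$) shows $x \in Z(R)$. For the final claim, since $x$ is now central, for $s \in Z(S)$ and any $t \in S$ the formula gives $t^{1+sx} = t + (tsx - sxt) = t + (ts - st)x = t$, because $s \in Z(S)$ forces $ts = st$; thus $S^{1+sx} \subseteq S$, with equality by cardinality.

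The main obstacle is the counting step in part (2): establishing that the conjugate $T^{1+x}$ is unchanged when $x$ is replaced by $x + i$ for $i \in I$. This requires combining the multiplicative identity $(1+x)(1+i) = 1 + x + i$ with the commutator formula of Lemma \ref{lem:conj1+J} and the two-sidedness of $I$, and some care is needed to promote the resulting set inclusion to an equality via a cardinality argument.
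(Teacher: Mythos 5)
Your proposal is correct and follows essentially the same route as the paper: part (1) via the uniqueness clause of Wedderburn--Malcev applied to $T$, part (2) via invariance of the two-sided ideal $I$ under conjugation together with the observation that $1+I$ stabilizes $T$ (so conjugates are indexed by cosets of $I$ in $J$), and part (3) via the commutator formula of Lemma \ref{lem:conj1+J} and the directness of $R = S \oplus J$. The only cosmetic difference is in the first claim of (2), where you factor a unit as $u = s(1+s^{-1}y)$ rather than invoking Theorem \ref{Max subring classification} to identify $T^u = S' \oplus I'$; both arguments are valid and of comparable length.
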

\begin{proof}
(1) Apply the Wedderburn-Malcev Theorem (Theorem \ref{Wedderburn thm}) to $T = S \oplus I$.

(2) By assumption, $T$ is a maximal subring of $R$ not containing $J$, and $I$ is a maximal subideal of $J$. Let $s \in S, y \in I$, and $u \in R^\times$. Then, $(s+y)^u = s^u + y^u \in S^u + I \subseteq T^u$. Since $T^u$ is also a maximal subring of $R$ that does not contain $J$, by Theorem \ref{Max subring classification} $T^u = S' \oplus I'$ for some $S' \in \msS(R)$ and some maximal subideal $I'$ of $J$. It follows that $I'=I$, and $S^u=S'=S^{1+x}$ for some $x \in J$.

As for the bound on the number of conjugates of $T$ in $R$, note that if $y \in I$, then $T^{1+y} = T$. Hence, $1+I$ is contained in the setwise stabilizer of $T$ in $1+J$, and the number of conjugates of $T$ is bounded above by $|1+J : I+I| = |J:I|$.

(3) By Lemma \ref{lem:conj1+J}, $s^{1 + x} = s + (sx - xs)$, and, since $R = S \oplus J$, $S^{1 + x} = S$ implies that $sx = xs$ for all $x \in S$. Since $xy=0=yx$ for all $y \in J$, we get $x \in Z(R)$. Next, assume $s \in Z(S)$.  Then, $sx \in Z(R)$, so for all $s' \in S$,
\begin{equation*}
(s')^{1 + sx} = s' + (s'(sx) - (sx)s') = s',
\end{equation*}
as desired.
\end{proof}

%\begin{lem}\label{lem:maxSS'}
%Let $S, S' \in \msS(R)$.  If there is a maximal subalgebra $T = S \oplus I$ such that $S, S' \subseteq T$, then $S' = S^{1 + x}$ for some $x \in I$.  
%\end{lem}

%\begin{proof}
%Apply the Wedderburn-Malcev Theorem (Theorem \ref{Wedderburn thm}) to $T = S \oplus I$.
%\end{proof}

%\begin{lem}\label{lem:conjofT}
%Let $S \in \msS(R)$ and $T = S \oplus I$ be a maximal subalgebra containing $S$.  Then every conjugate of $T$ in $R$ is of the form $T^{1+x}$, where $x \in J$, and $T^{1 + x} = S^{1 + x} \oplus I$.   
%\end{lem}

%\begin{proof}
%This follows  from Theorem \ref{Max subring classification}, the Wedderburn-Malcev Theorem (Theorem \ref{Wedderburn thm}), and Lemma \ref{lem:1+Jarithmetic} (4).
%\end{proof}

%\begin{lem}\label{lem:1+sj}
%Suppose $j \in J$ and $S^{1 + j} = S$.  Then, $sj = js$ for all $s \in S$, and, moreover, if $s \in Z(S)$, then $S^{1 + sj} = S$ for all $s \in S$.
%\end{lem}

%\begin{proof}
%By Lemma \ref{lem:conj1+J}, $s^{1 + j} = s + (sj - js)$, and, since $R = S \oplus J$, $S^{1 + j} = S$ implies that $sj = js$ for all $s \in S$.  Next, suppose $s \in Z(S)$.  Then, for all $s' \in S$,
%\[ (s')^{1 + sj} = s' + (s'(sj) - (sj)s' = s', \]
%since $s, j \in Z(R)$, as desired.
%\end{proof}

\subsection{Not all conjugates contained in a single subring}

From here, we specialize to the case where $S$ is commutative. In this subsection, we will consider the situation where there does not exist a maximal subring $T \subseteq R$ containing every conjugate of $S$.

\begin{lem}\label{lem:centSin1+J}
Let $S \in \msS(R)$, where $S$ is commutative, and $T = S \oplus I$ be a maximal subring containing $S$. Assume there exists $S' \in \msS(R)$ such that $S' \not\subseteq T$.  If there exists $x \in J$ such that $S^{1+x} = S$, then $x \in I$.  In particular, the setwise stabilizer of $S$ in $1+J$ is contained in $1+I$.
\end{lem}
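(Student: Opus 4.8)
The plan is to argue by contradiction: assuming $x \in J$ satisfies $S^{1+x}=S$ but $x \notin I$, I will show that \emph{every} element of $\msS(R)$ is contained in $T$, contradicting the hypothesis that some $S' \in \msS(R)$ has $S' \not\subseteq T$. The closing ``in particular'' clause is merely a restatement: an element $1+x$ of $1+J$ stabilizes $S$ precisely when $S^{1+x}=S$, so the main conclusion $x \in I$ is exactly the assertion that the stabilizer lies in $1+I$.

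First I would record a clean criterion for containment in $T$. For $s \in S$ and $z \in J$, Lemma \ref{lem:conj1+J} gives $s^{1+z} = s + (sz-zs)$ with $sz-zs \in J$; since $R = S \oplus J$, it follows that $S^{1+z} \subseteq T = S \oplus I$ if and only if $sz - zs \in I$ for every $s \in S$. By the Wedderburn--Malcev Theorem (Theorem \ref{Wedderburn thm}), every member of $\msS(R)$ equals $S^{1+z}$ for some $z \in J$. Hence it suffices to prove that $sz - zs \in I$ for all $s \in S$ and all $z \in J$, as this would force $\msS(R) \subseteq T$.

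Next I would extract structure from the hypothesis $S^{1+x}=S$. By Lemma \ref{S and J lem}(3) this forces $x \in Z(R)$. Using $J^2 = \{0\}$, for any $r = s_r + y_r$ with $s_r \in S$ and $y_r \in J$ we have $rx = s_r x$, so $Rx = Sx$; and since $x$ is central, $Rx$ is a two-sided ideal of $R$ contained in $J$. Because $T = S \oplus I$ is a maximal subring with $J \not\subseteq T$, the ideal $I$ is maximal among the two-sided ideals of $R$ contained in $J$ (Theorem \ref{Max subring classification}). Therefore $Sx + I = Rx + I$ is a two-sided ideal of $R$ inside $J$ that strictly contains $I$ (as $x \in Rx \setminus I$), so maximality of $I$ gives $Sx + I = J$.

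The crux is a short commutator computation, and it is exactly here that both the centrality of $x$ and the commutativity of $S$ are used. For any $s, s' \in S$,
\begin{equation*}
s(s'x) - (s'x)s = ss'x - s'xs = ss'x - s'sx = (ss' - s's)x = 0,
\end{equation*}
where the middle equality uses $xs = sx$ and the last uses that $S$ is commutative; thus $S$ centralizes the ideal $Sx$. Now take any $z \in J$ and, via $Sx + I = J$, write $z = s'x + i$ with $s' \in S$ and $i \in I$. Then for every $s \in S$,
\begin{equation*}
sz - zs = \big(s(s'x) - (s'x)s\big) + (si - is) = si - is \in I,
\end{equation*}
since the first bracket vanishes and $I$ is a two-sided ideal. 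By the criterion of the second paragraph this yields $S^{1+z} \subseteq T$ for all $z \in J$, hence $\msS(R) \subseteq T$, contradicting $S' \not\subseteq T$; therefore $x \in I$. I expect the main obstacle to be identifying the correct reduction—recasting both $S^{1+x}=S$ and the target conclusion as commutator conditions modulo $I$—and then observing that centrality of $x$ combined with commutativity of $S$ makes $S$ centralize all of $Sx$; once $Sx+I=J$ is established, the containment $S^{1+z}\subseteq T$ follows uniformly in $z$.
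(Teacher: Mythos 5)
Your proof is correct and follows essentially the same route as the paper: both hinge on $x$ being central (Lemma \ref{S and J lem}(3)), the resulting decomposition $J = I + Sx$, and the commutativity of $S$ forcing conjugation by $1+sx$ to fix $S$. The only difference is presentational—you phrase the last step as an additive commutator computation showing all conjugates $S^{1+z}$ lie in $T$, whereas the paper factors $1+x' = (1+sx)(1+y)$ and applies Lemma \ref{S and J lem}(3) directly to the given $S'$.
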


\begin{proof}
Assume $S^{1+x} = S$ for some $x \in J \backslash I$.  By Theorem \ref{Max subring classification} (2), $I$ is maximal among subideals of $R$ contained in $J$, so as a two-sided ideal of $R$, $J$ is generated by $I$ and $x$.  By Lemma \ref{S and J lem} (3), $x$ is central in $R$, so $J = I + Rx$. In addition, since $R = S \oplus J$ and $J^2=\{0\}$, $Rx=Sx$. Thus, $J = I + Sx$.

Next, since $S' \not\subseteq T$, $S' \neq S$, so there is $x' \in J$ such that $S^{1+x'} = S'$, and we may assume that $x' = y + sx$ for some $y \in I$ and $s \in S$.  By Lemma \ref{lem:1+Jarithmetic} (2), this implies that $1 + x' = (1 + sx)(1 + y)$. Since $S$ is commutative, Lemma \ref{S and J lem} (3) yields
\begin{equation*}
S' = S^{1+x'} = S^{(1 + sx)(1+y)} = S^{1+y} \subseteq T,
\end{equation*}
a contradiction.  Therefore, $x \in I$, and the setwise stabilizer of $S$ in $1+J$ is contained in $1 + I$.
\end{proof}

\begin{lem}\label{lem:numberTconj}
Let $S \in \msS(R)$, where $S$ is commutative, and suppose $T = S \oplus I$ is a maximal subring of $R$ containing $S$.  If there exists $S' \in \msS(R)$ such that $S' \not\subseteq T$ and $x, x' \in J$, then $T^{1+x} = T^{1 + x'}$ if and only if $x' \in x + I$.  In particular, $T$ has exactly $|J:I|$ conjugates in $R$. 
\end{lem}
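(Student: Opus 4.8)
The plan is to reduce the stated equivalence to a single statement about which elements of $1+J$ stabilize $T$, and then to identify that stabilizer with $1+I$. First I would observe that, since $1+J$ is abelian (Lemma \ref{lem:1+Jarithmetic}(5)) and $(1+x)(1+x')^{-1} = 1 + (x-x')$ by Lemma \ref{lem:1+Jarithmetic}(3) together with $xx'=0$ from Lemma \ref{lem:1+Jarithmetic}(1), conjugation gives
\begin{equation*}
T^{1+x} = T^{1+x'} \iff T^{1+(x-x')} = T.
\end{equation*}
Writing $w = x - x'$ and noting that $x' \in x + I$ holds exactly when $w \in I$, the whole lemma therefore amounts to proving that $1+w$ stabilizes $T$ if and only if $w \in I$.

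For the direction $w \in I \Rightarrow T^{1+w}=T$, I would reuse the observation already made in the proof of Lemma \ref{S and J lem}(2): because $I$ is a two-sided ideal of $R$ contained in $J$ (Theorem \ref{Max subring classification}(2)), for every $s \in S$ and $y \in I$ we have $s^{1+y} = s + (sy - ys) \in S \oplus I = T$ by Lemma \ref{lem:conj1+J}, while $I^{1+y}=I$; hence $T^{1+y}=T$ for all $y \in I$. Thus $1+I$ lies in the setwise stabilizer of $T$, which gives this direction at once.

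The substantive direction is $T^{1+w}=T \Rightarrow w \in I$, and this is where the hypothesis that some $S' \in \msS(R)$ fails to lie in $T$ becomes indispensable. Assuming $T^{1+w}=T$, I would write $T^{1+w} = S^{1+w} \oplus I$ via Lemma \ref{S and J lem}(2), so that $S^{1+w} \oplus I = S \oplus I = T$; thus both $S$ and $S^{1+w}$ are semisimple complements to $I$ inside $T$, and Wedderburn--Malcev in the form of Lemma \ref{S and J lem}(1) produces $z \in I$ with $S^{1+w} = S^{1+z}$. Conjugating back and using $wz = 0$ yields $S^{1+(w-z)} = S$, i.e.\ $1+(w-z)$ stabilizes $S$. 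Now Lemma \ref{lem:centSin1+J}---whose hypothesis is precisely the existence of some $S' \not\subseteq T$---forces the setwise stabilizer of $S$ in $1+J$ into $1+I$, so $w - z \in I$, and hence $w \in I$. This completes the equivalence.

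Finally, for the count, the equivalence shows that $x + I \mapsto T^{1+x}$ is a well-defined injection from $J/I$ into the set of conjugates of $T$, and Lemma \ref{S and J lem}(2) shows that every conjugate of $T$ has the form $T^{1+x}$ for some $x \in J$, giving surjectivity. Hence the conjugates of $T$ are in bijection with $J/I$, and there are exactly $|J:I|$ of them. I expect the only genuine obstacle to be the middle step: one must be careful that the passage from $S^{1+w}=S^{1+z}$ back to the stabilizer statement $S^{1+(w-z)}=S$ is valid and that Lemma \ref{lem:centSin1+J} truly applies. This is exactly why the assumption $S' \not\subseteq T$ cannot be dropped---without it the stabilizer of $S$ in $1+J$ could be strictly larger than $1+I$, so that $1+w$ might stabilize $T$ with $w \notin I$ and $T$ would have fewer than $|J:I|$ conjugates.
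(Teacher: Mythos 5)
Your proposal is correct and follows essentially the same route as the paper: reduce $T^{1+x}=T^{1+x'}$ to the stabilizer condition $T^{1+(x-x')}=T$, pull the offending element into $I$ via Lemma \ref{S and J lem}(1), and invoke Lemma \ref{lem:centSin1+J} to conclude, with the upper bound on the number of conjugates coming from Lemma \ref{S and J lem}(2). The only difference is that you spell out the easy direction ($w\in I\Rightarrow T^{1+w}=T$) explicitly, which the paper leaves implicit in Lemma \ref{S and J lem}(2).
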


\begin{proof}
By Lemma \ref{S and J lem} (2), the number of conjugates of $T$ in $R$ is at most $|J:I|$. To prove that there are exactly $|J:I|$ conjugates, suppose that $T^{1+x'} = T^{1+x}$ for $x, x' \in J$ for some $x, x' \in J$. We will show that $x-x' \in I$.

We have $T^{1 + (x' - x)} = T^{(1+x')(1 - x)} = T,$ and hence $S^{1 + (x' - x)} \subseteq T$.  By Lemma \ref{S and J lem} (1), there exists $y \in I$ such that $S^{1 + (x'-x)} = S^{1+y}$, which in turn means that $S^{1 + (x' - x) - y} = S^{(1 + (x' -x))(1 - y)} = S$.  By Lemma \ref{lem:centSin1+J}, $(x' - x) - y \in I$, and so $x' - x \in I$.  
\end{proof}

\begin{lem}\label{lem:intTs}
Let $S \in \msS(R)$, where $S$ is commutative, and let $T = S \oplus I$ be a maximal subring of $R$ containing $S$.  Suppose that not all conjugates of $S$ in $R$ are contained in $T$. Let $T_1 \colonequals T$, $T_2$, \dots, $T_n$ be the conjugates of $T$ in $R$, where $n = |J:I|$, and let $A \colonequals T_1 \cap T_2$.  Then, the following are equivalent for a given $s \in S$:
 \begin{itemize}
  \item[(i)] $s + y \in A$, for some $y \in I$;
  \item[(ii)] $s \in A$;
  \item[(iii)] $s + I \subseteq A$;
  \item[(iv)] $s^{1 + J} \subseteq s + I$;
  \item[(v)] $s + I \subseteq \bigcap_{i = 1}^n T_i$.
 \end{itemize}
In particular, 
\begin{equation*}
\bigcap_{i = 1}^n T_i = A.
\end{equation*}
\end{lem}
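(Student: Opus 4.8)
The plan is to reduce every one of the five conditions to a single commutator criterion on $s$. The starting point is a membership test. Combining Lemma \ref{lem:conj1+J} (which gives $s^{1+x} = s + (sx - xs)$) with the description $T^{1+x} = S^{1+x}\oplus I$ from Lemma \ref{S and J lem}(2), and using that $I = T \cap J$ is a two-sided ideal of $R$, I would first show that for $s \in S$ and $x \in J$,
\begin{equation*}
s \in T^{1+x} \iff sx - xs \in I.
\end{equation*}
Indeed, writing $s = (s')^{1+x} + z$ with $s' \in S$, $z \in I$ and projecting onto the summands of $R = S \oplus J$ forces $s' = s$ and $z = -(sx - xs)$, so membership is equivalent to $sx - xs \in I$. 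By Lemma \ref{lem:numberTconj} the conjugates $T_i$ are exactly the rings $T^{1+x}$ as $x$ runs over coset representatives of $J/I$; since $x \mapsto sx - xs$ is additive and carries $I$ into $I$, membership of $s$ in $\bigcap_{i=1}^n T_i$ is equivalent to $sx - xs \in I$ for all $x \in J$, which by Lemma \ref{lem:conj1+J} is exactly condition (iv).

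Next come the elementary equivalences. Because each $T_i = S^{1+x_i}\oplus I$ contains $I$, both $A$ and $\bigcap_{i=1}^n T_i$ contain $I$ and are additive subgroups. This makes (i) $\Leftrightarrow$ (ii) $\Leftrightarrow$ (iii) immediate by adding or subtracting elements of $I \subseteq A$, and it shows that (v) is equivalent to $s \in \bigcap_i T_i$, hence to (iv). Taking $T_1 = T$ (so its parameter lies in $I$) and $T_2 = T^{1+x_2}$ with $x_2 \notin I$---such an $x_2$ exists precisely because not all conjugates of $S$ lie in $T$---the membership test shows that $s \in A$ if and only if $sx_2 - x_2 s \in I$. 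Thus (ii) is the commutator condition for the single element $x_2$.

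The crux is therefore the implication (ii) $\Rightarrow$ (iv): upgrading the commutator condition from $x_2$ alone to all of $J$. Here I would invoke the maximality of $I$ from Theorem \ref{Max subring classification}: since $x_2 \notin I$, the two-sided ideal $I + \langle\langle x_2 \rangle\rangle$ equals $J$, and because $J^2 = \{0\}$ this ideal is generated additively by $I$ together with the products $\{s_1 x_2 s_2 : s_1, s_2 \in S\}$. Assuming $c \colonequals sx_2 - x_2 s \in I$, a direct computation exploiting the commutativity of $S$ gives, for all $s_1, s_2 \in S$,
\begin{equation*}
s(s_1 x_2 s_2) - (s_1 x_2 s_2)s = s_1(sx_2 - x_2 s)s_2 = s_1 c s_2 \in I,
\end{equation*}
where the middle equality uses $ss_1 = s_1 s$ and $ss_2 = s_2 s$, and the membership uses that $I$ is an ideal. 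By additivity $sx - xs \in I$ for every $x \in \langle\langle x_2 \rangle\rangle$, and trivially for $x \in I$, so $sx - xs \in I$ for all $x \in J$, i.e.\ (iv) holds. This commutator manipulation is the one genuinely nontrivial step; the reverse implication (iv) $\Rightarrow$ (ii) is trivial, and everything else is bookkeeping.

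Finally, for the displayed equality, $\bigcap_{i=1}^n T_i \subseteq A$ is automatic since $A = T_1 \cap T_2$. For the reverse, I would take $r = s + w \in A$ with $s \in S$, $w \in J$; membership in $T_1 = S \oplus I$ forces $w \in I$, whence $s = r - w \in A$ satisfies (ii), hence (v), giving $r \in s + I \subseteq \bigcap_{i=1}^n T_i$. Therefore $\bigcap_{i=1}^n T_i = A$. I expect the main obstacle to be exactly the commutator computation in the third paragraph, since it is the only place where commutativity of $S$ and the maximality of $I$ must be combined to globalize a pointwise condition.
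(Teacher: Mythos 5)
Your proposal is correct and follows essentially the same route as the paper's proof: the same projection argument onto $R = S \oplus J$ yields $sx - xs \in I$, and the same combination of $J^2 = \{0\}$ (so that $\langle\langle x_2 \rangle\rangle = Sx_2S$), the maximality of $I$, and the commutativity of $S$ globalizes the commutator condition from the single element $x_2$ to all of $J$. Packaging everything as the membership test $s \in T^{1+x} \iff sx - xs \in I$ is a clean reorganization of the paper's steps, not a genuinely different argument.
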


\begin{proof}
By Lemma \ref{S and J lem} (2), $T_2 = T^{1+x} \oplus I$ for some $x \in J$. We are assuming that $T_2 \ne T$, so $x \notin I$. Hence, $A = (S^{1+x}  \oplus I) \cap (S \oplus I)$.

Clearly, (v) implies (i), and conditions (i), (ii), and (iii) are equivalent because $I \subseteq A$. We will first show that (ii) implies (iv).  Assume $s \in A$.  By Lemma \ref{lem:conj1+J}, it suffices to show that $s\alpha - \alpha s \in I$ for all $\alpha \in J$. Let $\langle x \rangle$ denote the two-sided ideal of $R$ generated by $x$. Because $I$ is a maximal subideal of $J$, we have $J = I + \langle x \rangle$. Note that $s(\alpha+\alpha') - (\alpha+\alpha')s = (s\alpha - \alpha s) + (s\alpha' - \alpha's)$ for any $\alpha, \alpha' \in J$. So, it will be enough to show that $s\alpha - \alpha s \in I$ for all $\alpha \in \langle x \rangle$. Furthermore, since $J^2=\{0\}$, a generic element of $\langle x \rangle$ has the form $\sum_{j} b_j x c_j$ for some $b_j, c_j \in S$. Thus, (iv) will hold as long as $s(bxc) - (bxc)s \in I$ for all $b, c \in S$.

Now, we know that $s \in S^{1+x}  \oplus I$, so $s = t^{1+x} + y$, where $t \in S$ and $y \in I$.  By Lemma \ref{lem:conj1+J}, 
\begin{equation*}
s =t^{1 + x} + y = t + (tx - xt) + y.
\end{equation*}
Since $R = S \oplus J$, $t = s$ and $tx - xt + y =0$. Hence, $sx-xs=tx-xt = -y \in I$. Let $b, c \in S$. Using the fact that $S$ is commutative and $sx-xs=-y$, we have
\begin{equation*}
s(bxc) - (bxc)s = b(sx)c  - b(xs)c = b(sx - xs)c = -byc \in I,
\end{equation*}
as desired. Therefore, (ii) implies (iv).

To complete the proof that (i)--(v) are equivalent, we will show that (iv) implies (v). Let $s \in S$ and assume that $s^{1+J} \subseteq s + I$. By Lemma \ref{S and J lem} (2), for each $1 \le i \le n$, there exists $x_i \in J$ such that $T_i = T^{1+x_i} = S^{1+x_i} \oplus I$. Let $y \in I$. Then, $s + y = s^{1+x_i} +y - (sx_i - x_is)$. By assumption, $s^{1+x_i} \in s+I$, so $sx_i - x_is \in I$. Thus, for each $i$, $s+y \in T_i$ and $s+I \subseteq T_i$.

To conclude that $\bigcap_{i = 1}^n T_i = A$, it suffices to show that $A \subseteq \bigcap_{i = 1}^n T_i$. Let $a \in A$; then, $a=s+y$ for some $s \in S$ and $y \in I$ because $A \subseteq T$. By the equivalence of (i) and (v), $a \in \bigcap_{i = 1}^n T_i$.
\end{proof}

\begin{prop}\label{prop:upperboundmultT}
Let $S \in \msS(R)$, where $S$ is commutative, and let $T = S \oplus I$ be a maximal subring of $R$ containing $S$.  Suppose that not all conjugates of $S$ in $R$ are contained in $T$. Let $T_1 \colonequals T$, $T_2$, \dots, $T_n$ be the conjugates of $T$ in $R$, where $n = |J:I|$, and let $A \colonequals T_1 \cap T_2$. Then, there exists a maximal subring $M$ of $R$ containing $A + J$, and $\{T_1, T_2, \dots , T_n, M\}$ is a cover of $R$.  Consequently, $\sigma(R) \le |J:I| + 1$.
%Let $S \in \msS(R)$, where $S$ is commutative, and suppose not all conjugates of $S$ are contained in a single maximal subalgebra of $R$.  If $T = S \oplus I$ is a maximal subalgebra of $R$ containing $S$; $T_1 \colonequals T$, $T_2$, \dots, $T_n$ are the conjugates of $T$ in $R$, where $n = |J:I|$;  and $A \colonequals T \cap T_2$, then there exists a maximal subalgebra $M$ of $R$ containing $A + J$, and $\{T_1, T_2, \dots , T_n, M\}$ is a cover of $R$.  In particular, $\sigma(R) \le |J:I| + 1$.
\end{prop}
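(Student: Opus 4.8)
The plan is to partition $R$ according to the semisimple part of each element, showing that elements whose $S$-part lies in $A$ are absorbed by $M$ while all remaining elements are covered by the conjugates $T_1, \dots, T_n$.

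First, I would pin down the structure of $A$ and of $A + J$. Since $I \subseteq A \subseteq T = S \oplus I$, Lemma \ref{lem:intTs} gives $A = (A \cap S) \oplus I$ and identifies $A \cap S = \{s \in S : [s,J] \subseteq I\}$, where I write $[s,J] \colonequals \{sx - xs : x \in J\}$; this uses condition (iv) of that lemma together with $s^{1+J} = s + [s,J]$, which is Lemma \ref{lem:conj1+J}. Consequently $A + J = (A \cap S) \oplus J$. Because $T_1 \ne T_2$ are distinct maximal subrings, $A = T_1 \cap T_2 \subsetneq T_1$, forcing $A \cap S \subsetneq S$; hence $A + J$ is a proper subring of $R$ containing $J$. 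As $R$ is finite, $A + J$ is contained in a maximal subring $M$, which proves the first assertion.

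The crux is the structural claim that, for each fixed $s \in S$, the additive subgroup $[s,J]$ is a two-sided ideal of $R$ contained in $J$. Here both commutativity of $S$ and $J^2 = \{0\}$ are essential: writing a generic $r = s' + \beta$ with $s' \in S$ and $\beta \in J$, the product $\beta(sx - xs)$ vanishes, and the identities $s'(sx - xs) = s(s'x) - (s'x)s$ and $(sx - xs)s' = s(xs') - (xs')s$ — both relying on $ss' = s's$ — place $r\,[s,J]$ and $[s,J]\,r$ back inside $[s,J]$. I expect recognizing these reindexings ($x \mapsto s'x$ and $x \mapsto xs'$) to be the main obstacle, as it is precisely what makes the maximality of $I$ applicable in the next step.

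With $[s,J]$ an $R$-subideal of $J$, so is $[s,J] + I$; by the maximality of $I$ among subideals of $J$ (Theorem \ref{Max subring classification}(2)), we get $[s,J] + I \in \{I, J\}$. I would then record the union explicitly: since every conjugate has the form $T^{1+x} = S^{1+x} \oplus I$ with $s^{1+x} = s + (sx - xs)$ (Lemma \ref{S and J lem}(2) and Lemma \ref{lem:conj1+J}), one has $\bigcup_{i=1}^n T_i = \{s + \gamma : s \in S,\ \gamma \in [s,J] + I\}$. Finally, for $r = s + \alpha$ with $s \in S$ and $\alpha \in J$: if $s \in A \cap S$ then $r \in (A \cap S) \oplus J \subseteq M$; otherwise $[s,J] \not\subseteq I$, so $[s,J] + I = J \ni \alpha$ and $r \in \bigcup_{i=1}^n T_i$. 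Hence $\{T_1, \dots, T_n, M\}$ covers $R$ and $\sigma(R) \le n + 1 = |J:I| + 1$.
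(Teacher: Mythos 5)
Your proof is correct, and its engine is the same as the paper's: the single computation that drives everything is that commutativity of $S$ gives $b(sx-xs)c = s(bxc)-(bxc)s$ for $b,c\in S$, so the commutators of a fixed $s$ with $J$ absorb multiplication by $S$ (and by $J$, since $J^2=\{0\}$). You package this as ``$[s,J]$ is a two-sided ideal of $R$, hence $[s,J]+I\in\{I,J\}$ by maximality of $I$,'' together with the explicit description $\bigcup_i T_i=\{s+\gamma: s\in S,\ \gamma\in [s,J]+I\}$; the paper instead picks one $\alpha$ with $s\alpha-\alpha s\notin I$, notes $J=I+\langle s\alpha-\alpha s\rangle$, and exhibits an explicit conjugating element $\prod_j(1+b_j\alpha c_j)$ carrying $s+y\in T$ to $s+x$. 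These are logically equivalent, and your version is arguably more transparent since it avoids choosing a representative expression for $x$. The one genuinely different ingredient is your argument that $A+J$ is proper: you extract $A=(A\cap S)\oplus I$ from the equivalences (i)--(iii) of Lemma \ref{lem:intTs} and conclude $A+J=(A\cap S)\oplus J\subsetneq R$ because $A\cap S\subsetneq S$, whereas the paper argues from the conjugation-invariance $A^{1+x}=A$ (which implicitly needs Wedderburn--Malcev applied to $A$ to reach a contradiction with the hypothesis that not all conjugates of $S$ lie in $T$). Your route is more elementary and self-contained on that point. No gaps.
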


\begin{proof}
Note first that $A$ is a proper subring of $T$.  By Lemma \ref{lem:intTs}, $A = \bigcap_{i = 1}^n T_i$, and, for any $x \in J$,
\begin{equation*}
A^{1 + x} = \left(\bigcap_{i=1}^n T_i \right)^{1+x} = \bigcap_{i = 1}^n T_i^{1 + x} = \bigcap_{i = 1}^n T_i = A.
\end{equation*}
So, $A + J$ is a proper subring of $R$, and there is maximal subring $M$ of $R$ containing $A + J$.

We will now show that $\{T_1, T_2, \dots , T_n, M\}$ is a cover of $R$.  Let $r \in R$, and write $r = s + x$, where $s \in S$ and $x \in J$. If $s \in A$, then $r \in A+J \subseteq M$. So, assume that $s \notin A$. By Lemma \ref{lem:intTs}, $s^{1+J} \not\subseteq s+I$, so there exists $\alpha \in J$ such that $s\alpha - \alpha s \notin I$. Since $I$ is maximal in $J$, $J = I + \langle s\alpha - \alpha s \rangle$. So, there exist $y \in I$, $m \ge 1$ and elements $b_j, c_j \in S$ ($1 \le j \le m$) such that $x = y + \sum_{j=1}^m b_j (s \alpha - \alpha s) c_j$. Recalling that $S$ is commutative, we have
%\begin{equation*}
%\sum_{j=1}^m b_j (s \alpha - \alpha s) c_j = \sum_{j=1}^m s(b_j \alpha c_j) - (b_j \alpha c_j)s = s\Big(\sum_{j=1}^m b_j \alpha c_j\Big) - \Big(\sum_{j=1}^m b_j \alpha c_j\Big)s
%\end{equation*}
\begin{equation*}
s^{1 + b_j \alpha c_j} = s + (s b_ j \alpha c_j - b_j \alpha c_j s)= s + b_j(s\alpha -\alpha s)c_j,
\end{equation*}
which implies that
\begin{equation*}
(s + y)^{\prod_{j=1}^m (1 + b_j \alpha c_j )} = s + \sum_{j=1}^m b_j (s\alpha - \alpha s)c_j + y = s + x,
\end{equation*}
and so $s + x$ is in a conjugate of $T$. Thus, $r \in \bigcup_{i=1}^n T_i$, and $\{T_1, T_2, \dots , T_n, M\}$ is a cover of $R$.
\end{proof}

We now have everything we need in the case where no maximal subring in $R$ contains all complements of $J$.  

\begin{thm}\label{thm:Rcase1}
Suppose $R/J$ is commutative, $\sigma(R) < \sigma(R/J)$, and there does not exist a maximal subring $M$ of $R$ such that 
\[\bigcup_{S \in \msS(R)} S \subseteq M.\]
Then, $\sigma(R) = |J:I| + 1$, where $T = S \oplus I$ is a maximal subring of $R$ with $|J:I|$ minimal. In particular, $\sigma(R)$ has the form $p^d+1$ in this case.
\end{thm}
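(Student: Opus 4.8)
The plan is to prove the two inequalities $\sigma(R) \le |J:I|+1$ and $\sigma(R) \ge |J:I|+1$ separately, where $T = S \oplus I$ is chosen to be a maximal subring with $|J:I|$ as small as possible among all maximal subrings of the form $S' \oplus I'$.

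For the upper bound, I would first observe that the hypothesis translates directly into the hypothesis of Proposition \ref{prop:upperboundmultT}. By the Wedderburn--Malcev Theorem (Theorem \ref{Wedderburn thm}), every member of $\msS(R)$ is a conjugate $S^{1+x}$ of $S$, so $\bigcup_{S' \in \msS(R)} S' = \bigcup_{x \in J} S^{1+x}$. If all conjugates of $S$ were contained in $T$, then this union would lie in the maximal subring $T$, contradicting the standing assumption. Hence not all conjugates of $S$ lie in $T$, and Proposition \ref{prop:upperboundmultT} produces a cover $\{T_1, \dots, T_n, M\}$ of size $n+1 = |J:I|+1$, giving $\sigma(R) \le |J:I|+1$.

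For the lower bound, let $\mcC$ be any minimal cover, which by Lemma \ref{lem:basics}(4) we may take to consist of maximal subrings, and split it (via Theorem \ref{Max subring classification}) as $\mcC = \mcC_1 \cup \mcC_2$, where $\mcC_1$ are the members containing $J$ and $\mcC_2$ the members of the form $S' \oplus I'$. Because $\sigma(R) < \sigma(R/J)$ and $|\mcC_1| \le |\mcC| = \sigma(R)$, the images $\{M/J : M \in \mcC_1\}$ are too few to cover $R/J$; since each such $M$ is a union of cosets of $J$, some coset $s_0 + J$ (with $s_0 \in S$) avoids $\bigcup \mcC_1$ entirely and so must be covered by $\mcC_2$. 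A short computation using Lemma \ref{lem:conj1+J} shows that a member $S^{1+w} \oplus I' \in \mcC_2$ meets $s_0 + J$ in exactly the coset $(s_0 w - w s_0) + I'$, a set of size $|I'| \le |I|$, where the inequality records the minimality of $|J:I|$. Counting the $|J|$ elements of $s_0 + J$ then forces $|\mcC_2| \ge |J|/|I| = |J:I|$.

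It remains to gain the extra $+1$, which is the crux of the argument. Suppose toward a contradiction that $|\mcC| = |J:I| =: n$; then necessarily $|\mcC_2| = n$ and $\mcC_1 = \varnothing$, so $\mcC = \{S^{1+w_j} \oplus I_j' : 1 \le j \le n\}$ with each $|I_j'| \le |I|$. Applying the same coset computation with $s_0 = 0$, the portion of $J$ covered by the $j$-th member is exactly $I_j'$. Thus $I_1', \dots, I_n'$ would have to cover all of $J$, but each contains $0$ and $\sum_{j} |I_j'| \le n|I| = |J|$; since $n = |J:I| \ge 2$ (as $I$ is a proper subideal), the shared element $0$ forces $\big|\bigcup_{j} I_j'\big| \le |J| - (n-1) < |J|$, a contradiction. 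Hence $\sigma(R) \ge |J:I|+1$, and together with the upper bound $\sigma(R) = |J:I|+1$. Finally, since $R$ has characteristic $p$, the group $J/I$ is an elementary abelian $p$-group, so $|J:I| = p^d$ for some $d \ge 1$ and $\sigma(R) = p^d + 1$. The main obstacle is precisely this last step: the elementary coset count only yields $|\mcC_2| \ge |J:I|$, and ruling out equality requires the observation that every covering subideal contains $0$, so the subideals cannot partition $J$.
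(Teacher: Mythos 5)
Your proof is correct. The upper bound is handled exactly as in the paper, via Proposition \ref{prop:upperboundmultT}, but your lower bound takes a genuinely different and somewhat more elementary route. The paper argues through the conjugates of $S$: Lemma \ref{Sigma elementary lemma} places each $S' \in \msS(R)$ inside a cover member of the form $S' \oplus I_{S'}$, Lemma \ref{lem:numberTconj} shows such a member has exactly $|J:I_{S'}|$ conjugates, so at least $|J:I|$ cover members are needed, and these cannot cover $R$ because they all contain $B = \bigcap_{S' \in \msS(R)} S'$ (which contains $0$ and $1$), so their union has at most $|R| - |B|\left(|J:I|-1\right) < |R|$ elements. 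You instead work coset-by-coset: since $|\mcC_1| \le \sigma(R) < \sigma(R/J)$, some coset $s_0+J$ must be covered entirely by $\mcC_2$, each member $S^{1+w}\oplus I'$ of which meets that coset in a translate of $I'$ of size $|I'| \le |I|$, forcing $|\mcC_2| \ge |J:I|$; equality is then excluded by running the same computation on the coset $J$ itself, whose covering pieces are the subideals $I_j'$, all of which contain $0$ and hence cannot exhaust $J$. The pigeonhole kernel is identical in both versions (a common element shared by all pieces forbids an exact cover by pieces of total size $|J|$, resp.\ $|R|$), but your formulation needs only Theorem \ref{Max subring classification} and the Wedderburn--Malcev conjugacy of complements, bypassing Lemmas \ref{Sigma elementary lemma}, \ref{S and J lem}, \ref{lem:centSin1+J} and \ref{lem:numberTconj} in the lower bound; what the paper's conjugate-tracking version buys is that it is recycled almost verbatim in the proof of Theorem \ref{thm:Rcase2}, so it cannot be dispensed with elsewhere in the paper.
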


\begin{proof}
Since $\sigma(R) < \sigma(R/J)$, by Lemma \ref{Sigma elementary lemma}, for each $S \in \msS(R)$, there is a maximal subring $T_S$ with $J \not\subseteq T_S$ such that $T_S = S \oplus I_S$ for some ideal $I_S$ of $R$ contained in $J$.

Because $\bigcup_{S \in \msS(R)} S$ is not contained in any such $T_S$, there are exactly $|J:I_S|$ conjugates of $T_S$ by Lemma \ref{lem:numberTconj}.  If $I = I_S$ is chosen such that $|J:I|$ is minimal, this means there must be at least $|J:I|$ maximal subrings in any cover.  However, since $0 \in \bigcup_{S \in \msS(R)} S$, if $B \colonequals \bigcap_{S \in \msS(R)} S$, $B \neq \varnothing$, and so these $|J:I|$ maximal subrings contain at most
\[ \left(|S|\cdot|I| - |B| \right) \cdot \frac{|J|}{|I|} + |B| = |R| - |B|\left(\frac{|J|}{|I|} - 1 \right) < |R|\]
elements of $R$, and so $\sigma(R) \ge |J:I| + 1$.  On the other hand, by Proposition \ref{prop:upperboundmultT}, $\sigma(R) \le |J:I| + 1$.  The result follows.
\end{proof}

\subsection{All conjugates contained in a single subring}

We will maintain previous notation and assume that $R$ is a finite ring with unity in characteristic $p$ with radical $J$ satisfying $J^2 = \{0\}$ and
\begin{equation*}
\msS(R) = \{ S \subseteq R : S \text{ is a subalgebra of } R, S \cong R/\msJ(R), \text{ and } R = S \oplus \msJ(R)\}.
\end{equation*}
Moreover, we will assume throughout this subsection that there exists a maximal subring $T$ of $R$ such that \[ \bigcup_{S \in \msS(R)} S \subseteq T.\]  We will further define 
\[ A \colonequals \langle S : S \in \msS(R)\rangle, \]
i.e., $A$ is the subring generated by all complements of $J$ in $R$ and is thus the smallest subring containing all $S \in \msS(R)$.  Define $K \colonequals \msJ(A)$, and note that $K$ is itself an ideal of $R$ and that $A = S \oplus K$ for $S \in \msS(R)$.  

\begin{lem}\label{lem:case2commZ}
Suppose $R/J$ is commutative, $\sigma(R) < \sigma(R/J)$, and $\bigcup_{S \in \msS(R)} S \subseteq T$ for some proper subring $T$, where $T = S \oplus I$.  Then, $J = I \oplus \overline{I}$, where $\overline{I}$ is an ideal of $R$ contained in $Z \colonequals J \cap Z(R)$.  In particular, if $A$ is the algebra generated by the complements of $J$ in $\msS(R)$ and $K = \msJ(A)$, then $J = K \oplus \xbar{K}$, where $\xbar{K}$ is an ideal of $R$ contained in $J \cap Z(R)$. 
\end{lem}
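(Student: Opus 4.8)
The plan is to view everything through the lens of bimodules over the semisimple algebra $S$ and to exploit that semisimplicity to produce central complements. Because $J^2 = \{0\}$, for any additive subgroup $V \subseteq J$ the left and right $R$-actions factor through $R/J \cong S$; consequently the two-sided ideals of $R$ contained in $J$ are exactly the $(S,S)$-sub-bimodules of $J$, i.e.\ the submodules of $J$ regarded as a module over the envelope $E \colonequals S \otimes_{\F_p} S$ acting by $(a \otimes b)\cdot x = axb$. Since $S$ is a product of finite fields and finite fields are separable over $\F_p$, the algebra $E$ is itself a product of finite fields, hence semisimple, so every $E$-module is semisimple. I would first record that $Z = J \cap Z(R)$ equals the invariant submodule $J^S \colonequals \{x \in J : sx = xs \text{ for all } s \in S\}$: an $x \in J$ is central in $R$ iff it commutes with $S$, because $J^2 = \{0\}$ already forces $x$ to commute with $J$. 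A direct check shows $J^S$ is an $(S,S)$-sub-bimodule, hence a two-sided ideal of $R$.

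For the first assertion I would prove $J = I + Z$ and then split. Given $x \in J$, the conjugate $S^{1+x}$ is again a complement of $J$ (a conjugate of $S$), so by hypothesis $S^{1+x} \subseteq T = S \oplus I$; Lemma \ref{S and J lem}(1) then gives $S^{1+x} = S^{1+y}$ for some $y \in I$. Hence $S^{1+(x-y)} = S$, and Lemma \ref{S and J lem}(3) yields $x - y \in Z(R) \cap J = Z$. Thus $x = y + (x-y) \in I + Z$, so $J = I + Z$. As $J$ is a semisimple $E$-module with submodules $I$ and $Z$ satisfying $I + Z = J$, I decompose $Z = (Z \cap I) \oplus \overline{I}$ as $E$-modules; then $J = I \oplus \overline{I}$ with $\overline{I} \subseteq Z$ a sub-bimodule, i.e.\ an ideal of $R$. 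This gives the first conclusion.

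For the statement about $K$, note that $A \subseteq T$ (since $T$ is a subring containing every complement), so $K = A \cap J \subseteq I$. The key claim is $J = K + Z$; granting it, the same semisimple-splitting argument produces $\overline{K} \subseteq Z$ with $J = K \oplus \overline{K}$. To prove the claim I would show $sx - xs \in K$ for all $x \in J$ and $s \in S$. For $x \in I$ this is immediate: $S^{1+x}$ is a complement, so $S^{1+x} \subseteq A$, and by Lemma \ref{lem:conj1+J} the element $s^{1+x} - s = sx - xs$ lies in $A \cap J = K$. For general $x \in J$, pick $y \in I$ with $S^{1+x} = S^{1+y}$ as above; then $x - y \in Z$, whence $s(x-y) - (x-y)s = 0$ and so $sx - xs = sy - ys \in K$. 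Therefore the $(S,S)$-bimodule $J/K$ satisfies $s\overline{x} = \overline{x}s$ for all $s$ and all $\overline{x}$, that is $(J/K)^S = J/K$. Since $E$ is semisimple, the functor of $S$-invariants $M \mapsto M^S$ is exact (it is multiplication by a central idempotent of $E$), so the surjection $J \twoheadrightarrow J/K$ restricts to a surjection $Z = J^S \twoheadrightarrow (J/K)^S = J/K$; this says precisely that $J = K + Z$, finishing the claim.

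The hard part will be the passage from the pointwise commutator condition $sx - xs \in K$ to the genuine splitting $J = K + Z$. The subtlety is that a diagonal Peirce block $e_k J e_k$ is an $F_k$-bimodule on which the left and right $F_k$-actions may differ by a Frobenius twist, so an element $x$ with all commutators $sx - xs$ in $K$ need not itself be central; only its image in $J/K$ is forced to be. Overcoming this is exactly where separability of the finite fields $F_k$ is indispensable: it yields the semisimplicity of $E = S \otimes_{\F_p} S$ (equivalently, the Frobenius decomposition $e_k J e_k = \bigoplus_{\sigma} (e_k J e_k)_\sigma$, whose untwisted summand is the central part), and this guarantees that the non-central twisted summands are absorbed into $K$ while the untwisted part lifts back into $Z$.
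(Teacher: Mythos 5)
Your proof is correct, and the first assertion follows essentially the same route as the paper: for $x\in J$ the conjugate $S^{1+x}$ lands in $T$ by hypothesis, Lemma \ref{S and J lem}(1) and (3) give $y\in I$ with $x-y\in Z$, so $J=I+Z$, and then one splits off a complement of $I\cap Z$ inside $Z$ by semisimplicity. Your enveloping-algebra packaging ($E=S\otimes_{\F_p}S$, ideals in $J$ as $E$-submodules) is a clean reformulation, but note that for this splitting step separability is not actually needed: every element of $Z$ is central, so left and right $S$-actions on $Z$ coincide and plain semisimplicity of $S$ already makes any left complement a two-sided ideal (this is what the paper means by ``$S\oplus Z$ is a commutative ring''). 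Where you genuinely diverge is the second assertion. The paper disposes of it in one line by observing that $A=S\oplus K$ is itself a proper subring satisfying the hypotheses of the first part (it contains every complement by construction and sits inside $T$), and applying the first conclusion with $T=A$, $I=K$. You instead re-derive $J=K+Z$ by proving the commutator condition $sx-xs\in K$ and then lifting through the exact invariants functor $M\mapsto M^S=eM$; this is valid, and your remark about Frobenius-twisted Peirce blocks correctly identifies why that particular route needs $E$ to be semisimple. But the ``hard part'' you flag dissolves if you take the shortcut: running your own conjugation argument with $A$ in place of $T$ produces, for each $x\in J$, some $y\in K$ with $x-y\in Z$, which is $J=K+Z$ directly and with no appeal to separability. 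So the proposal is sound; it simply trades the paper's one-line reduction for a heavier (though instructive) module-theoretic detour.
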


\begin{proof}
Suppose each complement in $\msS(R)$ is contained in $T = S \oplus I$.  We claim first that there exists $x \in J \backslash I$ such that $x$ centralizes $S$.  Let $x' \in J \backslash I$.  Since all conjugates of $S$ are contained in $T$, $T^{1 + x'} = S^{1 + x'} \oplus I = S \oplus I$.  Thus, $S^{1 + x'} \subseteq T$, and, by Lemma \ref{S and J lem} (1), there is $y \in I$ such that $S^{1 + x'} = S^{1 + y}$, and so $S^{1 + (x' - y)} = S$.  By Lemma \ref{S and J lem} (3), for all $s \in S$, $s(x'-y) = (x' -y)s$.  Since $x' \not\in I$, $x = x' - y \not\in I$, and $x$ is the desired element that centralizes $S$.

Now, since $x$ centralizes $S$ and $J^2 = \{0\}$, we have $x \in J \cap Z(R) = Z$.  This means that $J = I + Z$, and $Z$ is an ideal of $R$ since $S$ is commutative. Because $S$ is semisimple, $Z$ is semisimple as both a left $S$-module and a right $S$-module.  Since $I \cap Z$ is a submodule of $Z$, there exists a left $S$-module $I_\ell$ in $Z$ such that $Z = (I \cap Z) \oplus I_\ell$ (a direct sum of left $S$-modules), and similarly there exists a right $S$-module $I_r$ in $Z$ such that $Z = (I \cap Z) \oplus I_r$ (a direct sum of right $S$-modules). But, $S \oplus Z$ is a commutative ring, so $\overline{I} \colonequals I_\ell = I_r$, and hence $I \cap Z$ has an ideal complement in $Z$.  Hence, $J = I + Z = I \oplus \overline{I}$, as desired.

Finally, since $\bigcup_{S \in \msS(R)} S \subseteq A$, the result can be applied to $T = A$.
 \end{proof}
 
\begin{lem}
\label{lem:SplusKbar}
Suppose $R/J$ is commutative and $A \colonequals \langle S : S \in \msS(R)\rangle$ is a proper subring of $R$ with $K = \msJ(A) \neq \{0\}$ and $J = K \oplus \xbar{K}$.  If $R$ is $\sigma$-elementary and $\mathcal{C}$ is a minimal cover of $R$, then, for every $S \in \msS(R)$, $S \oplus \xbar{K}$ is contained in some $T \in \mathcal{C}$. 
\end{lem}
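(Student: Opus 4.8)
The plan is to derive the statement directly from the $\sigma$-elementary hypothesis by way of Lemma~\ref{Sigma elementary lemma}. That lemma guarantees that a subring complementary to a two-sided ideal $I$ must lie inside some member of a minimal cover, provided $\sigma(R) < \sigma(R/I)$. So the whole task reduces to exhibiting $S \oplus \xbar{K}$ as such a complementary subring for a suitable choice of $I$, and the natural candidate is $I = K = \msJ(A)$, which is a nonzero two-sided ideal of $R$ by hypothesis.

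Concretely, I would fix $S \in \msS(R)$, set $S' \colonequals S \oplus \xbar{K}$, and proceed in three short steps. First, check that $S'$ is a subring: closure under addition is clear, and multiplication stays inside $S \oplus \xbar{K}$ because $\xbar{K}$ is an ideal of $R$ (absorbing the products $S\xbar{K}$ and $\xbar{K}S$) while $\xbar{K}^2 \subseteq J^2 = \{0\}$. Second, observe that Wedderburn--Malcev (Theorem~\ref{Wedderburn thm}) gives $R = S \oplus J$, and combining this with the hypothesis $J = K \oplus \xbar{K}$ yields $R = S \oplus K \oplus \xbar{K} = S' \oplus K$; the sum is direct since $S \cap J = \{0\}$ and $K \cap \xbar{K} = \{0\}$, and $K$ is a two-sided ideal of $R$. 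Third, because $R$ is $\sigma$-elementary and $K \neq \{0\}$, the definition gives $\sigma(R) < \sigma(R/K)$. Feeding the decomposition $R = S' \oplus K$ together with this strict inequality into Lemma~\ref{Sigma elementary lemma} produces a $T \in \mathcal{C}$ with $S' = S \oplus \xbar{K} \subseteq T$, as desired.

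I do not expect a genuine obstacle here: there is no delicate estimate or case analysis, only the bookkeeping needed to align the data with Lemma~\ref{Sigma elementary lemma}. The one point demanding attention is the choice of $I$---it is $K$, not $\xbar{K}$, that must play the role of the ideal, so that $S' = S \oplus \xbar{K}$ becomes the complementary subring and the nonvanishing of $K = \msJ(A)$ supplies exactly the nonzero two-sided ideal needed to trigger the $\sigma$-elementary inequality.
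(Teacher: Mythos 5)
Your proposal is correct and is essentially the paper's argument: the paper proves the lemma by intersecting the minimal cover with $S \oplus \xbar{K} \cong R/K$ and deriving the contradiction $\sigma(R) \ge \sigma(R/K)$, which is precisely the proof of Lemma~\ref{Sigma elementary lemma} specialized to $I = K$. Routing it through that lemma via the decomposition $R = (S \oplus \xbar{K}) \oplus K$, as you do, is the same idea packaged more cleanly.
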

 
\begin{proof}
 Let $S \in \msS(R)$ and assume that $S \oplus \xbar{K}$ is not contained in any $T \in \mathcal{C}$.  We note that $S \oplus \xbar{K}$ is itself a subring of $R$, since, by Lemma \ref{lem:case2commZ}, $\xbar{K}$ is an ideal of $R$.  Then, 
 \[ \xbar{\mathcal{C}} \colonequals \{T \cap (S \oplus \xbar{K}) : T \in \mathcal{C} \}\]
 is a cover of $S \oplus \xbar{K}$, so $\sigma(R) \ge \sigma(S \oplus \xbar{K})$.  However, $S \oplus \xbar{K} \cong R/K$ and $K \neq \{0\}$, so $\sigma(R) \ge \sigma(S \oplus \xbar{K}) = \sigma(R/K)$, a contradiction to $R$ being $\sigma$-elementary.  Therefore, for every $S \in \msS(R)$, $S \oplus \xbar{K}$ is contained in some $T \in \mathcal{C}$, as desired.
\end{proof}

\begin{thm}
\label{thm:Rcase2}
 Suppose $R/J$ is commutative and $A \colonequals \langle S : S \in \msS(R)\rangle$ is a proper subring of $R$ with $K = \msJ(A)$.  If $R$ is $\sigma$-elementary, then $K = \{0\}$, i.e., $R$ is a commutative ring.
\end{thm}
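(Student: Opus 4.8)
The plan is to assume $K \neq \{0\}$ and derive a contradiction with the hypothesis that $R$ is $\sigma$-elementary by proving that $\sigma(R) = \sigma(R/\xbar{K})$. First I would record that $\xbar{K} \neq \{0\}$: since $A = S \oplus K$ is a \emph{proper} subring and $J = K \oplus \xbar{K}$ by Lemma \ref{lem:case2commZ}, the equality $\xbar{K} = \{0\}$ would force $K = J$ and hence $A = S \oplus J = R$, contradicting properness. Because $\xbar{K}$ is then a nonzero (central) ideal, the equality $\sigma(R) = \sigma(R/\xbar{K})$ contradicts $\sigma$-elementarity; and since $\sigma(R) \le \sigma(R/\xbar{K})$ always holds, it suffices to produce a cover of $\bar{R} \colonequals R/\xbar{K} \cong A$ with at most $\sigma(R)$ members.

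To build such a cover, I would take a minimal cover $\mathcal{C}$ of $R$ by maximal subrings and project along $\pi \colon R \to \bar{R}$. By Theorem \ref{Max subring classification}, each $T \in \mathcal{C}$ either contains $J$ (and hence $\xbar{K}$) or has the form $T = S' \oplus I'$ with $I'$ maximal among the subideals of $J$. Call $T$ \emph{good} if $\xbar{K} \subseteq T$ and \emph{bad} otherwise. A good $T$ projects to a proper subring $\pi(T)$ of $\bar{R}$, whereas for a bad $T = S' \oplus I'$ maximality forces $I' + \xbar{K} = J$ and $\pi(T) = \bar{R}$, so bad members are useless after projection. Lemma \ref{lem:SplusKbar} supplies, for each $S \in \msS(R)$, a member of $\mathcal{C}$ containing $S \oplus \xbar{K}$, and any such member is necessarily good; this is the mechanism guaranteeing that enough good subrings are present.

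The crux---and the step I expect to be the main obstacle---is showing that the good members of $\mathcal{C}$ already cover $R$, so that their images under $\pi$ cover $\bar{R}$ and number at most $|\mathcal{C}|$. Equivalently, one must show that every element $s' + i'$ of a bad $T = S' \oplus I'$ lies in some good subring. The tool for this is the commutator computation from Proposition \ref{prop:upperboundmultT}: for a complement $S'$ and $x \in J$ one has $(s')^{1+x} = s' + (s'x - xs')$, so conjugating by suitable products $\prod_j (1 + b_j x c_j)$ with $b_j, c_j \in S'$ moves the $S'$-component through the two-sided ideal generated by a single commutator, while the central ideal $\xbar{K}$ is left untouched. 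Using Lemma \ref{lem:SplusKbar} to place $S' \oplus \xbar{K}$ inside a good subring and then adjusting by such conjugations to absorb the $I'$-part modulo $\xbar{K}$ should exhibit $s' + i'$ in a good subring without recourse to $T$. I anticipate the bookkeeping here---keeping the conjugating elements inside the relevant subideals and tracking the decomposition $J = K \oplus \xbar{K}$ together with the centrality statements of Lemma \ref{S and J lem}(3)---to be the delicate part.

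Granting this, the good members of $\mathcal{C}$ project to a cover of $\bar{R} = R/\xbar{K}$ of size at most $\sigma(R)$, forcing $\sigma(R) = \sigma(R/\xbar{K})$ and contradicting $\sigma$-elementarity; hence $K = \{0\}$. Finally I would translate $K = \{0\}$ into commutativity: then $A = S \oplus K = S$, and since $A$ contains every complement while each complement has the same cardinality as $S$, we get $\msS(R) = \{S\}$. Thus $S^{1+x} = S$ for every $x \in J$, so Lemma \ref{S and J lem}(3) gives $J \subseteq Z(R)$; with $S$ commutative, $J$ central, $J^2 = \{0\}$, and $R = S \oplus J$, a direct check shows that $R$ is commutative, as claimed.
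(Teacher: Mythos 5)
Your overall strategy -- contradict $\sigma$-elementarity by showing $\sigma(R) \ge \sigma(R/\xbar{K})$ for the nonzero central ideal $\xbar{K}$ -- is the same as the paper's, and your preliminary observations (that $\xbar{K} \ne \{0\}$, that Lemma \ref{lem:SplusKbar} forces each $S \oplus \xbar{K}$ into a member of the cover) are sound. But the step you yourself flag as the crux is a genuine gap, not mere bookkeeping. You need every element $s' + i'$ of a ``bad'' member $T = S' \oplus I'$ (with $\xbar{K} \not\subseteq I'$) to lie in a ``good'' member, and the proposed mechanism does not deliver this: conjugation by elements of $1+J$ only perturbs $s'$ by commutators $s'x - xs'$, which lie in $K$ (indeed $\xbar{K}$ is central, so it contributes nothing), whereas the $I'$-component $i'$ you must absorb is an arbitrary element of a maximal subideal and need not be reachable as a sum of such commutators; moreover the good members need not be conjugates of one another, so there is no single family of conjugates through which to ``move'' $s'+i'$. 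Nothing in Lemmas \ref{lem:SplusKbar} or \ref{S and J lem} rules out that a minimal cover genuinely needs bad members to catch such elements, so the claim that the good members alone cover $R$ is unsupported.

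The paper's proof of Theorem \ref{thm:Rcase2} avoids projecting the cover altogether and instead runs a counting argument. It shows that covering the conjugates of $S \oplus \xbar{K}$ requires at least $|K:\msJ(B)|$ maximal subrings (where $B$ is a maximal subring of $A$ containing as many conjugates of $S$ as possible, via Lemma \ref{lem:numberTconj} applied inside $A$), that $|K:\msJ(B)|$ subrings cannot exhaust $R$ because their pairwise intersection contains at least $\{0,1\}$ (the same element count as in Theorem \ref{thm:Rcase1}), hence $\sigma(R) \ge |K:\msJ(B)| + 1$; and separately, by Proposition \ref{prop:upperboundmultT} applied to $A \cong R/\xbar{K}$, that $\sigma(R/\xbar{K}) \le |K:\msJ(B)| + 1$. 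Chaining these gives $\sigma(R) \ge \sigma(R/\xbar{K})$ with no need to exhibit an explicit cover of the quotient coming from $\mathcal{C}$. If you want to salvage your route, you would have to replace the ``good members cover $R$'' claim with a comparison of this numerical kind. Your closing derivation of commutativity from $K = \{0\}$ is correct and matches the paper.
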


\begin{proof}
Assume $R$ is $\sigma$-elementary, and suppose on the contrary that $K \neq \{0\}$. Let $\mathcal{C}$ be a minimal cover of $R$.  By Lemma \ref{lem:case2commZ}, $J = K \oplus \xbar{K}$, where $\xbar{K}$ is an ideal of $R$ contained in $J \cap Z(R)$, and by Lemma \ref{lem:SplusKbar}, for every $S \in \msS(R)$, $S \oplus \xbar{K}$ is contained in some $T \in \mathcal{C}$.  
 
Let $S \in \msS(R)$.  Note that no proper subring of $R$ that contains $\xbar{K}$ can contain all conjugates of $S$ in $R$, since $R = A \oplus \xbar{K}$.  Moreover, all subrings of the form $S^\prime \oplus \xbar{K}$ are conjugate in $R$ (since $\xbar{K}$ is an ideal and the subrings in $\msS(R)$ are conjugate in $R$), so we can refer to these subrings as the conjugates of $S \oplus \xbar{K}$ in $R$.  If $T$ is a maximal subring of $R$ containing both $\xbar{K}$ and as many conjugates of $S$ as possible, then $T = B \oplus \xbar{K}$, where $B$ is a maximal subring of $A$ containing as many conjugates of $S$ as possible (and $|A:B|$ minimal subject to that condition).  

Without loss of generality, assume $S \subseteq B$, and so $B = S \oplus \msJ(B)$.  By construction, $A$ is the smallest subring of $R$ containing all the conjugates of $S$, so no maximal subring of $A$ contains all conjugates of $S$ in $A$. Thus, on the one hand, it requires at least $|A:B| = |K:\msJ(B)|$ maximal subrings of $R$ to cover all of the conjugates of $S \oplus \xbar{K}$ in $R$.  On the other hand, by Lemma \ref{lem:numberTconj}, $|K:\msJ(B)|$ maximal subrings of $A$ suffice to cover all of the conjugates of $S$ in $A$, and hence $|K:\msJ(B)|$ maximal subrings of $R$ suffice to cover all of the conjugates of $S \oplus \xbar{K}$ in $R$.

%Without a loss of generality, assume $S \subseteq B$, and so $B = S \oplus \msJ(B)$.  Since no maximal subalgebra of $A$ contains all conjugates of $S$ in $R$, and all the conjugates of $S$ in $R$ are conjugate in $A$, no maximal subalgebra of $A$ contains all conjugates of $S$ in $A$.  Thus, it requires at least $|A:B| = |K:\msJ(B)|$ maximal subalgebras of $R$ to cover all of the conjugates of $S \oplus \xbar{K}$ in $R$.  On the other hand, by Lemma \ref{lem:numberTconj}, $|K:\msJ(B)|$ maximal subalgebras of $A$ suffice to cover all of the conjugates of $S$ in $A$, and hence $|K:\msJ(B)|$ maximal subalgebras of $R$ suffice to cover all of the conjugates of $S \oplus \xbar{K}$ in $R$.  Let $\mathcal{B}$ be a minimal cover of the conjugates of $S \oplus \xbar{K}$ in $R$.  Then,

%Now, $R$ is $\sigma$-elementary and $A \cong R/\xbar{K}$, so $\sigma(R) < \sigma(A)$. By Lemma \ref{Sigma elementary lemma}, 
Let $\mathcal{B}$ be a minimal cover of the conjugates of $S \oplus \xbar{K}$ in $R$, i.e., assume that $\mathcal{B}$ contains $|K:\msJ(B)|$ maximal subrings.  Then,
\begin{equation*}
\xbar{\mathcal{B}} \colonequals \{ T \cap A : T \in \mathcal{B}\}
\end{equation*}
is a minimal cover of the conjugates of $S$ in $A$.  However, by the same reasons as in the proof of Theorem \ref{thm:Rcase1}, $\xbar{\mathcal{B}}$ cannot be a cover of $A$: $D \colonequals \bigcap_{C \in \xbar{\mathcal{B}}} C$ contains at least two elements ($0$ and $1$), so the collection $\xbar{\mathcal{B}}$ contains at most
\[ \left(|B| - |D| \right) \cdot \frac{|K|}{|\msJ(B)|} + |D| = |A| - |D|\left(\frac{|K|}{|\msJ(B)|} - 1 \right) < |A|\]
elements.  This implies that $\mathcal{B}$ cannot be a cover of $R = A \oplus \xbar{K}$, and hence $\sigma(R) \ge |K:\msJ(B)| + 1$.  On the other hand, by Proposition \ref{prop:upperboundmultT}, $|K:\msJ(B)| + 1$ subrings suffice to cover $A \cong R/\xbar{K}$, so $\sigma(R) \ge \sigma(R/\xbar{K})$, a contradiction to $R$ being $\sigma$-elementary.  Therefore, we conclude that $K = \{0\}$, and hence $J = \xbar{K} \subseteq Z(R)$, and $R$ is commutative.
\end{proof}

\subsection{Proofs of Theorem \ref{thm:main} and Corollary \ref{cor:13}}

We are now able to prove Theorem \ref{thm:main}.

\begin{proof}[Proof of Theorem \ref{thm:main}]
 Let $R$ be a finite, $\sigma$-elementary ring with Jacobson radical $J \neq \{0\}$ such that $R/J$ is commutative.  Since $R$ is $\sigma$-elementary, $\sigma(R) < \sigma(R/J)$.  By Theorem \ref{full reduction thm}, we may assume that $R$ has characteristic $p$ for some prime $p$ and $J^2 = \{0\}$.  By Theorem \ref{Wedderburn thm}, there exists a subring $S$ of $R$ such that $R \cong S \oplus J$, and $S$ is unique up to conjugation by elements of $1 + J$. On the one hand, if no maximal subring contains all conjugates of $S$ in $R$, then by Theorem \ref{thm:Rcase1} $\sigma(R) = p^d +1$ for some positive integer $d$.  On the other hand, if there exists a maximal subring containing all conjugates of $S$, then by Theorem \ref{thm:Rcase2}, $R$ is a commutative ring, and $\sigma(R) = p^d + 1$ for some positive integer $d$ by Corollary \ref{Full commutative case}.  The result follows.
\end{proof}

To prove Corollary \ref{cor:13}, we use the formula from \cite{Werner} for the covering number of a direct product of copies of $\F_q$. Let $q = p^n$, define
\begin{equation*}
\omega(n) = \begin{cases} 1 & n=1 \\ \# \text{ prime divisors of } n & n >1 \end{cases}
\end{equation*}
and let $\tau(q)$ be as defined in Theorem \ref{thm:Werner1}. Then, by \cite[Thm.\ 5.3]{Werner},
\begin{equation*}
\sigma\big(\prod_{i=1}^{\tau(q)} \F_q\big) = \tau(q) \omega(n) + n \binom{\tau(q)}{2}.
\end{equation*}

\begin{proof}[Proof of Corollary \ref{cor:13}]
As usual, by Theorem \ref{full reduction thm} we may assume that $R$ is finite of characteristic $p$ and that $J^2=\{0\}$. If $R$ is coverable with $\sigma(R)$ finite, then there exists a $\sigma$-elementary residue ring $\olR$ of $R$ such that $\sigma(R)=\sigma(\olR)$. So, we can assume without loss of generality that $R$ is $\sigma$-elementary. If $J \ne \{0\}$, then by Theorem \ref{thm:main}, $\sigma(R) = p^d+1$ for some $d \ge 1$. Clearly, $\sigma(R) \ne 13$ in this case.

Assume now that $J = \{0\}$. So, $R = R/J$ is commutative. By Theorem \ref{thm:comm_sigma}, $R \cong \prod_{i=1}^{\tau(q)} \F_q$ for some $q=p^n$, and hence 
\begin{equation*}
\sigma(R) = \tau(q) \omega(n) + n \binom{\tau(q)}{2}.
\end{equation*}
If $n=1$, then $\sigma(R) = p + \binom{p}{2}$, which cannot equal 13. If $n \ge 2$, then $\sigma(R) \ge \tau(q)^2$, so to get $\sigma(R)=13$ we need $\tau(q) \le 3$. This means that $\F_p[x]$ contains at most 2 irreducible polynomials of degree $n$. The only cases in which this occurs are $q=4$ or $q=8$. However, one may check that if $q=4$, then $\sigma(R) = 4$; and if $q=8$, then $\sigma(R)=12$. We conclude that it is impossible for $\sigma(R)$ to equal 13.
\end{proof}
%%%%%%%%%%%%%%%%%%%%%%%%%%%%%%%%%%%%%%%%%%%%%%%%%%%%%%%%%%
%%%%%%%%%%%%%%%%%%%%%%%%%%%%%%%%%%%%%%%%%%%%%%%%%%%%%%%%%%
%%%%%%%%%%%%%%%%%%%%%%%%%%%%%%%%%%%%%%%%%%%%%%%%%%%%%%%%%%


\begin{thebibliography}{99}
%\bibliographystyle{plain}

%\bibitem{BruckBryanMuir} M. Bruckheimer, A. C. Bryan, A. Muir. \emph{Groups which are the union of three subgroups}. Amer. Math. Monthly 77 (1970) 52--57. 

\bibitem{CaiWerner} M. Cai, N. J. Werner. \emph{Covering numbers of upper triangular matrix rings over finite fields}. Involve 12 (2019), no. 6, 1005--1013.

\bibitem{Cohen} J. Cohen. \emph{On rings as unions of four subrings}. Preprint (2020). Available at:  \url{https://arxiv.org/abs/2008.03803}.

\bibitem{Cohn} J. H. E. Cohn. \emph{On $n$-sum groups}. Math. Scand. 75 (1994) 44--58.

\bibitem{Crestani} E. Crestani. \emph{Sets of elements that pairwise generate a matrix ring}. Comm. Algebra 40 (2012), no. 4, 1570--1575. 

\bibitem{DetLuc} E. Detomi, A. Lucchini. \emph{On the structure of primitiven-sum groups}. Cubo 10 (2008), no. 3, 195--210.

\bibitem{GaronziKappeSwartz} M. Garonzi, L.-C. Kappe, and E. Swartz.  \textit{On integers that are covering numbers of groups}.  Experimental Mathematics, to appear.  Available at: \url{https://doi.org/10.1080/10586458.2019.1636425}.

%\bibitem{HaberRos} S. Haber, A. Rosenfeld. \emph{Groups as unions of proper subgroups}. Amer. Math. Monthly 66 (1959) 491--494. 

\bibitem{Kappe}  L.-C. Kappe. \emph{Finite coverings: a journey through groups, loops, rings, and semigroups}, in Group Theory, Combinatorics, and Computing. Contemporary Mathematics, Vol. 611. Amer. Math. Soc., Providence, RI, 2014. 79--88.

\bibitem{Khare} A. Khare. \emph{Vector spaces as unions of proper subspaces}. Linear Algebra Appl. 431 (2009), no. 9, 1681--1686. 

\bibitem{Lam} T. Y. Lam. \emph{A First Course in Noncommutative Rings}. Second edition. Graduate Texts in Mathematics, 131. Springer-Verlag, New York, 2001.

\bibitem{Lewin} J. Lewin. \emph{Subrings of finite index in finitely generated rings}. J. Algebra 5 (1967), 84--88.

\bibitem{LucMar} A. Lucchini, A. Mar\`{o}ti. \emph{Rings as the union of proper subrings}. Algebr. Represent. Theory 15 (2012) 1035--1047.

\bibitem{LucMarP} A. Lucchini, A. Mar\`{o}ti. \emph{Rings as the union of proper subrings}. Preprint (2010). Available at: http://arxiv.org/abs/1001.3984v1

\bibitem{McD} B. R. McDonald. \emph{Finite Rings with Identity}. Pure and Applied Mathematics, Vol. 28. Marcel Dekker, New York, 1974.

\bibitem{Neumann} B. H. Neumann. \emph{Groups covered by permutable subsets}. J. London Math. Soc. 29 (1954), 236--248.

\bibitem{PeruginelliWerner} G. Peruginelli, N. J. Werner. \emph{Maximal subrings and covering numbers of finite semisimple rings}. Comm. Algebra 46 (2018), no. 11, 4724--4738.

\bibitem{Pierce} R. S. Pierce. \emph{Associative algebras}. Graduate Texts in Mathematics, 88. Springer-Verlag, New York-Berlin, 1982.

\bibitem{Scorza} G. Scorza. \emph{I gruppi che possono pensari come somma di tre loro sottogruppi}. Boll. Un. Mat. Ital. 5 (1926), 216--218.

\bibitem{Tom} M. J. Tomkinson. \emph{Groups as the union of proper subgroups}. Math. Scand. 81 (1997) 191--198.

\bibitem{Werner} N. J. Werner. \emph{Covering numbers of finite rings}. Amer. Math. Monthly 122 (2015), no. 6, 552--566.

\end{thebibliography}
\end{document}